\documentclass[12pt]{article}
\usepackage[utf8]{inputenc}
\usepackage{url, color, epsfig, amsmath, amsthm, amssymb}
\usepackage{hyperref}
\usepackage{enumerate}
\usepackage{graphicx,wrapfig,lipsum}
\usepackage{svg}
\usepackage{thmtools}
\usepackage{subcaption}
\usepackage{float}
\usepackage{algorithm,algorithmic}
\usepackage{lmodern}
\usepackage{apxproof}
\usepackage{thmtools}
\usepackage{thm-restate}
\usepackage{svg}
\usepackage[T1]{fontenc}
\usepackage{hyperref}
\usepackage{cleveref}

\newcommand{\hide}[1]{}

\newcommand{\BB}{\mathcal{B}}
\newcommand{\GG}{\mathcal{G}}

\newcommand{\R}{\mathbf{r}}
\newcommand{\B}{\mathbf{b}}

\newcommand{\HH}{\mathcal{H}}
\newcommand{\KK}{\mathcal{K}}

\newcommand{\tw}{\textsc{tw}}

\newcommand{\arc}{\mathrm{arc}}

\newcommand{\RR}{\mathbb{R}}

\newcommand{\unique}{\textrm{unique}}
\newcommand{\poly}{\textrm{poly}}

\newtheorem{theorem}{Theorem}
\newtheorem{note}[theorem]{Note}

\newtheorem{lemma}[theorem]{Lemma}

\newtheorem{definition}[theorem]{Definition}

\newtheorem{proposition}[theorem]{Proposition}

\usepackage{tabularx,environ}
\makeatletter
\newcommand{\problemtitle}[1]{\gdef\@problemtitle{#1}}
\newcommand{\probleminput}[1]{\gdef\@probleminput{#1}}
\newcommand{\problemquestion}[1]{\gdef\@problemquestion{#1}}
\NewEnviron{problem}{
  \problemtitle{}\probleminput{}\problemquestion{}
  \BODY
  \par\addvspace{.5\baselineskip}
  \noindent
  \begin{tabularx}{\textwidth}{@{\hspace{\parindent}} l X c}
    \multicolumn{2}{@{\hspace{\parindent}}l}{\@problemtitle} \\
    \textbf{Input:} & \@probleminput \\
    \textbf{Question:} & \@problemquestion
  \end{tabularx}
  \par\addvspace{.5\baselineskip}
}
\makeatother

\usepackage[colorinlistoftodos,prependcaption,textsize=tiny]{todonotes}

\newtheoremstyle{case}{}{}{}{}{}{:}{ }{}
\theoremstyle{case}

\bibliographystyle{plainurl}
\title{Supports for Outerplanar and Bounded Treewidth Graphs\footnote{Parts of this work appeared in ``On Hypergraph Supports'' 
available at \url{https://arxiv.org/abs/2303.16515}, which has now been
split into two papers. The first paper is available at \url{https://arxiv.org/abs/2503.21287}, and this is the second paper in the series.}}

\author{Rajiv Raman\\ IIIT-Delhi, India. \\ rajiv@iiitd.ac.in\and Karamjeet Singh \\ IIIT-Delhi, India. \\ karamjeets@iiitd.ac.in}
\setlength {\marginparwidth }{2cm}
\begin{document}

\maketitle
\begin{abstract}
We study the existence and construction of sparse supports for hypergraphs derived from subgraphs of a graph $G$. For a hypergraph 
$(X,\mathcal{H})$, a support $Q$ is a graph on $X$ s.t.
$Q[H]$, the graph induced on vertices in $H$
is connected for every $H\in\mathcal{H}$.

We consider \emph{primal}, \emph{dual}, and 
\emph{intersection} hypergraphs defined by subgraphs 
of a graph $G$ that are \emph{non-piercing}, 
(i.e., each subgraph is connected, 
their pairwise differences remain connected).

If $G$ is outerplanar, we show that the primal, dual and
intersection hypergraphs admit supports that are outerplanar.
For a bounded treewidth graph $G$, we show
that if the subgraphs are non-piercing, then there exist
supports for the primal and dual hypergraphs of treewidth
$O(2^{\tw(G)})$ and $O(2^{4\tw(G)})$ respectively, and a support of treewidth $2^{O(2^{\tw(G)})}$
for the intersection hypergraph. We also show that for
the primal and dual hypergraphs, the exponential blow-up
of treewidth is sometimes essential. 

All our results are algorithmic and yield polynomial time
algorithms (when the treewidth is bounded).
The existence and construction of sparse supports is a crucial step
in the design and analysis of
PTAS for several packing and covering problems.
\end{abstract}

\section{Introduction}\label{sec:int}
A hypergraph is defined by a set $X$ of \emph{vertices}, and a collection $\mathcal{H}$
of subsets of $X$ called \emph{hyperedges}.
A \emph{support} for $(X,\mathcal{H})$ is a graph $Q=(X,F)$ s.t. 
$Q[H]$ is connected for each $H\in\mathcal{H}$, where $Q[H]$ is the subgraph of
$Q$ induced by the vertices in $H$.
A complete graph on $X$ is clearly a support, but the existence of a support is non-trivial if we restrict the support to be a sparse graph.
For example, Johnson and Pollack~\cite{Johnson1987Hypergraph} showed that it is NP-complete to decide if a hypergraph admits a planar support.
Buchin et al. \cite{buchin2011planar} showed that it is NP-hard to decide if s hypergraph admits a support that is \emph{k-outerplanar} for any $k\ge 2$.

Voloshina and Feinberg introduced planar supports as a notion of planarity of hypergraphs~\cite{voloshina1984planarity}.
Since, then the existence and construction of planar supports has found applications in several domains such as visualization~\cite{bereg2015colored,bereg2011red,brandes2010blocks,brandes2012path,buchin2011planar,havet2022overlaying,hurtado2018colored} and network design~\cite{anceaume2006semantic,baldoni2007tera,baldoni2007efficient,chand2005semantic,hosoda2012approximability,korach2003clustering,onus2011minimum}.
The existence of sparse supports and similar questions have also found applicability for geometric problems~\cite{Mustafa17,BasuRoy2018,Cohen-AddadM15,krohn2014guarding,mustafa2010improved,RR18}.

In this paper, we consider hypergraphs defined on a \emph{host graph}. That is, we are given as input a graph $G=(V,E)$ along with a collection $\mathcal{H}$
of subgraphs of $G$, and our goal is to construct a support for a 
hypergraph defined on $G$. 
We focus on restrictions on $G$ and the subgraphs $\mathcal{H}$ 
that guarantee the existence of a \emph{sparse support}.

Our motivation comes from the fact that the existence of a support implies
a PTAS for a large class of packing and covering problems via a now well-established
technique. See for example,~\cite{DBLP:conf/walcom/AschnerKMY13,ChanH12, mustafa2010improved, RR18,raman2025supportsgraphsboundedgenus}. We now describe our general framework before
giving some potential applications. 

Let $G=(V,E)$ be a given graph and let 
$\B(V)\subseteq V$ be a set of \emph{terminals}.
Let $\R(V)$ denote the vertices in $V\setminus\B(V)$.
The set $\R(V)$ constitute the \emph{non-terminals}.
A family $\mathcal{H}$ of subgraphs of $G$
defines a hypergraph $(\B(V),\mathcal{H})$, 
whose vertex set is $\B(V)$ and
each $H\in\mathcal{H}$ defines a hyperedge 
consisting of the vertices in $\B(V)\cap V(H)$.
We call this hypergraph the \emph{primal hypergraph}.
Similarly, we define the \emph{dual hypergraph}
whose elements are the subgraphs in $\mathcal{H}$, and each $v\in\B(V)$ defines
a hyperedge consisting of all $H\in\mathcal{H}$ containing $v$.
We also consider a generalization of the primal and dual hypergraphs namely, the \emph{intersection hypergraphs}. 
Let $\mathcal{H}$ and $\mathcal{K}$ be two families of subgraphs of $G$.
The \emph{intersection hypergraph} defined by $\mathcal{H}$ and $\mathcal{K}$
is the hypergraph 
$
(\mathcal{H},\{\mathcal{H}_K\}_{K\in\mathcal{K}}),\mbox{ where } \mathcal{H}_K=\{H\in\mathcal{H}:V(H)\cap V(K)\ne\emptyset\}
$

The intersection hypergraphs are a common generalization of the primal and dual hypergraphs.
Indeed, setting $\mathcal{H}=\B(V)$, we obtain the primal hypergraphs, and setting
$\mathcal{K}=V$, we obtain the dual hypergraphs. Thus, any results that hold for intersection
hypergraphs also hold for primal and dual hypergraphs.

A support for a primal (resp. dual/intersection) hypergraph
is called a primal (resp. dual/intersection) support.
Formally, a \emph{primal support} is a graph $Q$ on $\B(V)$ such that for each $H\in\mathcal{H}$, the vertices $V(H)\cap\B(V)$ induce a connected subgraph of $Q$.
A \emph{dual support} is a graph $Q^*$ on $\HH$ such that for each $v\in\B(V)$, the elements in $\{H\in\HH:v\in H\}$ induce a connected subgraph of $Q^*$.
Finally, an \emph{intersection support} is a graph $\tilde{Q}$ on $\HH$ such that for each $K\in\KK$, the elements in $\HH_K$ induce a connected subgraph of $\tilde{Q}$.
In the dual setting, note that we can assume without loss of generality
that $\B(V)=V$.

See Figure 2 in \cite{raman2025supportsgraphsboundedgenus} for examples of primal, dual and intersection
hypergraphs arising from a given host graph, and their corresponding primal, dual, and intersection supports.

We design algorithms to construct primal, dual
and intersection supports for graphs of bounded treewidth.
We obtain similar results for a special case of graphs of 
treewidth 2, namely, the outerplanar graphs.

In the following, the class $\mathcal{G}$ consists either of the family of bounded treewidth graphs, or outerplanar graphs.

\begin{problem}
  \problemtitle{Primal Support}
  \probleminput{A graph $G\in\GG$ with $c:V(G)\to\{\R,\B\}$, and a collection $\mathcal{H}$ of connected subgraphs of $G$.}
  \problemquestion{Is there a support $Q$ on $\B(V(G))$ such that $Q\in\GG$, i.e., $Q$ has bounded treewidth and for each $H\in\mathcal{H}$,
  $H\cap\B(V(G))$ induces a connected subgraph in $Q$?}
\end{problem}

\begin{problem}
      \problemtitle{Dual Support}
  \probleminput{A graph $G\in\GG$ and a collection $\HH$ of connected subgraphs of $G$.}
  \problemquestion{Is there a dual support $Q^*\in\GG$ on $\mathcal{H}$, i.e., for each $v\in V(G)$,
  $\{H\in\mathcal{H}: v\in H\}$ induces a connected subgraph of $Q^*$?}
\end{problem}

\begin{problem}
      \problemtitle{Intersection Support}
  \probleminput{A graph $G\in\GG$ and two collections $\mathcal{H}$ and $\mathcal{K}$ of connected subgraphs of $G$.}
  \problemquestion{Is there an intersection support $\tilde{Q}\in\GG$ on $\mathcal{H}$, 
  i.e., for each $K\in\mathcal{K}$, the set $\{H\in\mathcal{H}: V(H)\cap V(K)\neq\emptyset\}$ induces a connected subgraph of $\tilde{Q}$?}
\end{problem}

As we show below, for the problem defined above,
the answer is negative even if $\mathcal{G}$ is the family of trees.
However, if we restrict the family of subgraphs to be \emph{non-piercing},
then the answer to the question above is affirmative.
A family $\HH$ of connected subgraphs of a graph $G$ is said to be non-piercing, if for each $H,H'\in\HH$, both $V(H)\setminus V(H')$ and $V(H')\setminus V(H)$ induce connected subgraphs of $G$.

The motivation to consider hypergraphs defined by non-piercing subgraphs of a graph comes from geometry.
Raman and Ray \cite{RR18} showed that the intersection hypergraph of \emph{non-piercing regions}\footnote{A family $\mathcal{R}$ of connected regions in $\mathbb{R}^2$ is \emph{non-piercing} if for each $R,R'\in\mathcal{R}$, both $R\setminus R'$ and $R'\setminus R$ are connected regions. They generalize disks, pseudodisks, and halfspaces in $\RR^2$} in the plane admits a planar support.
It was shown in \cite{raman2025supportsgraphsboundedgenus} that non-piercing regions in $\mathbb{R}^2$ correspond to non-piercing subgraphs of a planar graph.
The work was extended to surfaces of higher genus where the subgraphs satisfy a condition of being \emph{cross-free}.
The authors showed that the intersection hypergraph defined by cross-free subgraphs of a bounded genus graph admits a support of bounded genus.
It was observed that non-piercing is an insufficient condition to obtain a dual support of bounded genus.
In this paper, we show that the non-piercing condition is sufficient to obtain an intersection support of bounded treewidth.
In order to construct an intersection support, we require the construction of
primal and dual supports.

Since the graph classes we consider here are simple, one may
wonder if the non-piercing condition is necessary
to obtain sparse supports. The following examples show that
this is indeed the case. 
For the primal hypergraph,
consider a star $K_{1,n}$
with leaves $v_1,\ldots, v_n$, and $v_0$, the central vertex.
We color the leaves $\B$ and
the central vertex $\R$. The subgraphs in $\mathcal{H}$ consist
of all pairs of leaves along with the central vertex $v_0$. Thus, $\mathcal{H}=\{H_{ij}: H_{ij}=\{v_i,v_0, v_j\}\}$. It is easy to see that the primal support in this case
is $K_n$, the complete graph on $n$ vertices. 
For the dual hypergraph, consider a star $K_{1,\binom{n}{2}}$
with central vertex $v_0$.
Each leaf $v_j$ is labeled by a unique pair $\{x_j,y_j\}$
of $\{1,\ldots, n\}$. 
There are $n$ subgraphs $H_1,H_2,\ldots,H_n$, where $H_i=\{v_0\}\cup\{v_j: i\in\{x_j,y_j\}\}$.
It is easy to check that the dual support in this case is also $K_n$,
as there are exactly two subgraphs containing each leaf.

Below, we give a few potential applications where our techniques
can apply. Since our focus is on the existence and construction of
supports, we do not elaborate more on the applications as the
algorithmic results follow via standard techniques once there
is a support graph.

Consider a graph $G$. Note that a set of cliques in $G$ is
a natural non-piercing family of subgraphs of $G$. Given two
sets of cliques of $G$, a set of \emph{red cliques} and a
set of \emph{blue cliques}, consider the problem of 
choosing the smallest number of red cliques s.t. each blue clique shares a vertex
with at least one of the chosen red cliques.
If $G$ has bounded treewidth, since we show that there exists an intersection support
of bounded treewidth, it implies that this problem admits a PTAS via a standard
\emph{local-search} framework described in~\cite{DBLP:conf/walcom/AschnerKMY13,ChanH12, mustafa2010improved}. Note that in this problem, the set of 
red/blue cliques can be specified arbitrarily.

As another potential example,  
let $\mathcal{C}$ be a set of cliques of a graph $G$. 
Then, the problem of choosing the largest subset $S$ of
$\B(V)$ so that each clique in $\mathcal{C}$ contains
at most one vertex of $S$ admits a sub-exponential time
algorithm for graphs with treewidth at most $c\log n$, 
for some sufficiently small $c<1$. 
The sub-exponential time algorithm is as follows: 
we show that a support graph $Q$
can be computed in polynomial time, and $Q$ has treewidth
at most $2^{O(\tw(G))}=n^{\delta}$ for some $\delta<1$.
We enumerate over the subsets of vertices in a 
balanced separator of $Q$ and then recursively find
such subgraphs in the two sub-problems obtained on removing
the separator while ensuring that the choices made
in the sub-problems are compatible with the choices made
further up the recursion tree. 
Since $Q$ is a support, each clique that
contains vertices on either side of the separator must necessarily
contain vertices in the separator. Since the separator is balanced,
the recursion tree has depth $O(\log n)$ and 
since the number of vertices of $Q$ is at most $n$, and
the separator has sublinear size, we obtain a
sub-exponential time algorithm.

\section{Related Work}\label{sec:relwrk}
The notion of planarity for hypergraphs, unlike that for graphs
is not uniquely defined. This notion was first studied by
Zykov~\cite{zykov}, but that was very restrictive.
Voloshina and Feinberg~\cite{voloshina1984planarity}
defined a notion of planarity of hypergraphs that is equivalent to
the existence of a support graph that is planar, i.e., a \emph{planar support}.
Johnson and Pollak~\cite{Johnson1987Hypergraph} showed that deciding if a hypergraph
admits a planar support is NP-hard.
Buchin et al. \cite{buchin2011planar} strengthened their result
showing that it is NP-hard even to decide if a hypergraph admits a support
that is  $k$-outerplanar, for $k\ge 2$.
However, one can decide in polynomial time if a hypergraph has a support that is a path, a cycle, a \emph{cactus}\footnote{A connected graph is called cactus if each of its edges participates in at most one cycle.}, or a tree with bounded maximum degree~\cite{beeri,BOOTH1976335,brandes2010blocks,buchin2011planar,tarjan1984simple}.
Recently, Chudnovsky et al. \cite{chudnovsky2025vertex} gave characterizations of hypergraphs having a tree support.

Besides, other notions of sparsity have also been studied.
Du~\cite{du1986optimization} proved that it is NP-hard to construct a support with the minimum number of edges.

As stated in the introduction, the existence of a sparse support for an appropriate
hypergraph implies a PTAS for several packing and covering problems via a
local-search framework. See~\cite{BasuRoy2018,ChanH12,mustafa2010improved} for various packing and covering problems in the planar setting including the Independent Set, Dominating Set, Set Cover problems. 
Raman and Ray~\cite{RR18} showed the existence of a planar support for the intersection hypergraph defined by non-piercing regions in the plane. This result
was extended by Raman and Singh~\cite{raman2025supportsgraphsboundedgenus}, who showed that
hypergraphs defined by
cross-free subgraphs of a graph of genus $g$ also admit a supports of genus at most $g$.
Using this, the authors showed that if the hypergraph is defined by a \emph{restrictive} collection of non-piercing regions on a surface of genus $g$, then there is a support of genus $g$.
These results imply a PTAS for all the problems mentioned above.

\section{Preliminaries}\label{sec:prelim}
Let $G=(V,E)$ be a graph and let $\mathcal{H}$ be a collection of 
connected subgraphs of $G$. For $v\in V$, 
let $\mathcal{H}_v=\{H\in\mathcal{H}:v\in V(H)\}$.
Similarly, for an edge $e\in E(G)$, 
we write $\mathcal{H}_{e}=\{H\in\mathcal{H}:e\in E(H)\}$.
We colloquially use $H$ to denote the vertices $V(H)$ and
for two subgraphs $X,Y$ of a graph $G$, we write $X\cap Y$ to mean $V(X)\cap V(Y)$, $X\setminus Y$ to mean $V(X)\setminus V(Y)$, and $X\subseteq Y$ to mean $V(X)\subseteq V(Y)$.

Let $\B(V)$ and $\R(V)$ be a partition of $V(G)$ into a set 
of \emph{terminals} and a set of \emph{non-terminals}, respectively.
Formally, let $c:V(G)\to\{\R,\B\}$ be a 2-coloring of $V(G)$ (not necessarily proper).
We call $\R(V)=c^{-1}(\R)$ the set of \emph{red} vertices and $\B(V)=c^{-1}(\B)$ the set of \emph{blue} vertices.
We use $\B(H)$ to denote $V(H)\cap\B(V)$, and $\R(H)$ to denote $V(H)\cap\R(V)$.

\begin{definition}[Non-piercing subgraphs]\label{def:nonpiercing} A family $\mathcal{H}$ of connected subgraphs of a graph $G$ is called non-piercing if the induced subgraph of $G$ on the vertices $V(H)\setminus V(H')$ is connected $\forall\;H,H'\in\mathcal{H}$.
\end{definition}

\begin{definition}[Outerplanar graph]
A graph $G$ is called outerplanar if there is an embedding of $G$ in the plane such that all its vertices lie on the exterior face.
\end{definition}

Next, we define the notion of \emph{tree decomposition}.
Throughout the paper, we use the term \emph{node} to refer to the elements of $V(T)$ for a tree $T$, and we use \emph{vertices} to refer to the elements of $V(G)$ for a graph $G$.
We use letters $x,y,z$ for the nodes of $T$, and $u,v,w$ for the vertices of $G$.

\begin{definition}[Tree decomposition] Given a graph $G=(V,E)$, a tree decomposition of $G$ is a pair $(T,\mathcal{B})$, 
where $T$ is a tree and $\mathcal{B}$ is a collection of \emph{bags} - subgraphs of $G$ indexed by the nodes of $T$, that satisfies the following properties:
\begin{enumerate}
    \item For each $v\in V(G)$, the set of bags of $T$ containing $v$ induces a sub-tree of $T$.
    \item For every edge $\{u,v\}$ in $G$, there is a bag $B\in\mathcal{B}$ such that $u,v\in B$.
\end{enumerate}
\end{definition}
\begin{definition}[Treewidth]
The width of a tree decomposition $(T,\mathcal{B})$ is defined to be   $\max_{x\in V(T)} |B_x|-1$.
The treewidth of a graph $G$ is the minimum width over all the tree decompositions of $G$, and is denoted $\tw(G)$. 
\end{definition}

We use two well-known properties of tree decompositions. First, that we can assume that
$T$ is a rooted binary tree. That is, we can modify a given tree decomposition to one where
$T$ is a rooted binary tree without increasing the treewidth. 
If $T$ is rooted, then we use $T_x$ to denote the subtree rooted at node $x$.
The second is
that for any edge $e=\{x,y\}$ of $T$, the set $B_x\cap B_y$ is a \emph{separator}
in $G$. That is, the induced subgraph $G'=G\setminus\{B_x\cap B_y\}$ of $G$ is disconnected.
The set $B_x\cap B_y$ is called
the \emph{adhesion set} corresponding to edge $\{x,y\}$ of $T$. 
We refer the reader to Chapter 12 in~\cite{diestel2005graph} for more details on the properties of
treewidth and tree decompositions. 

For a graph $G$ and collections $\HH$ and $\KK$ of subgraphs of $G$, we use the pair $(G,\HH)$ to refer to the primal or dual hypergraph, and triple $(G,\HH,\KK)$ for the intersection hypergraph.
We call the tuple $(G,\mathcal{H})$ \emph{a graph system} and the tuple $(G,\mathcal{H},\mathcal{K})$ \emph{an intersection system}.
If $G\in\mathcal{G}$, and $\mathcal{H,K}$ satisfy a property $\Pi$, then we say that
$(G,\mathcal{H})$ is a $\mathcal{G}$ $\Pi$ graph system, or that
$(G,\mathcal{H},\mathcal{K})$ is a $\mathcal{G}$ $\Pi$ intersection system.
For example, 
if $G$ has bounded treewidth, and $\mathcal{H}$ and $\mathcal{K}$ are connected
subgraphs of $G$, then $(G,\mathcal{H},\mathcal{K})$ is a bounded treewidth connected intersection system.
Similarly, if $G$ is outerplanar and $\mathcal{H}$ is non-piercing, then
$(G,\mathcal{H})$ is an outerplanar non-piercing system.

\begin{note}
Since two subgraphs are said to intersect if they share a vertex, a hypergraph defined by subgraphs of a graph $G$ remains the same if the subgraphs are replaced by the corresponding induced subgraphs.
Moreover, if $\HH$ is a collection of non-piercing subgraphs, then it remains non-piercing if the subgraphs are replaced by the corresponding induced subgraphs.
Therefore, we assume throughout that the term \emph{subgraph} refers to an
\emph{induced subgraph} of $G$.
\end{note}

Also note that adding edges to $G$ leaves the subgraphs in $\mathcal{H}$ or $\mathcal{K}$ non-piercing. Thus, we can 
assume in the outerplanar setting that $G$ is a maximal outerplanar graph, and in the setting where $G$ has bounded
treewidth, we can assume that each bag in a tree decomposition of the graph induces a complete graph, as this does 
not increase the treewidth (see Section~\ref{sec:treewidth} for more details).

Let $\mathcal{H}$ be a collection of subgraphs of $G$. For two subgraphs $H, H'\in\mathcal{H}$ 
if $H\subseteq H'$, we say that $H$ is \emph{contained in} $H'$. If there are no such subgraphs $H, H'$, we say that
$\mathcal{H}$ is \emph{containment-free}. The containments in $\mathcal{H}$ yields a natural partial order $\prec_N$ on $\mathcal{H}$,
where $H\prec_N H' \Leftrightarrow V(H)\subseteq V(H')$.
Let $\mathcal{H}^*\subseteq\mathcal{H}$ be the containment-free subfamily of $\mathcal{H}$ consisting of the maximal elements of this partial order.
For an intersection support (and hence for a dual support), 
the following result 
shows that it is sufficient to 
assume that $\mathcal{H}$ is containment-free.

\begin{lemma}
\label{lem:contfree}
Let $(G,\mathcal{H,K})$ be a graph system and  $\mathcal{H}^*\subseteq\mathcal{H}$ be the containment-free
subfamily of $\mathcal{H}$ consisting of all the maximal elements of $(\mathcal{H},\prec_N)$.
Then,
\begin{enumerate}
    \item If $(G,\mathcal{H^*,K})$ has an outerplanar support, then $(G,\mathcal{H,K})$ also has an outerplanar support.
    \item If $(G,\mathcal{H^*,K})$ has a support of treewidth $t$, then $(G,\mathcal{H,K})$ also has a support of treewidth $t$.
\end{enumerate}

\end{lemma}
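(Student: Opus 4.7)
The plan is to construct the support for $(G,\HH,\KK)$ by starting with the given support $Q^*$ for $(G,\HH^*,\KK)$ and attaching each non-maximal subgraph as a pendant (leaf) to any one of its maximal containers.

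More concretely, for every $H\in\HH\setminus\HH^*$, fix a witness $\phi(H)\in\HH^*$ with $V(H)\subseteq V(\phi(H))$; such a $\phi(H)$ exists because $\HH^*$ consists of all maximal elements of $(\HH,\prec_N)$. Build $\tilde{Q}$ on vertex set $\HH$ by taking $Q^*$ on $\HH^*$ and adding, for each $H\in\HH\setminus\HH^*$, a new vertex $H$ together with the single edge $\{H,\phi(H)\}$.

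The correctness check is the short, crucial step. Fix $K\in\KK$ and consider $\HH_K=\{H\in\HH:V(H)\cap V(K)\neq\emptyset\}$ and $\HH^*_K=\HH_K\cap\HH^*$. By hypothesis $Q^*[\HH^*_K]$ is connected. For any $H\in\HH_K\setminus\HH^*$, from $V(H)\cap V(K)\neq\emptyset$ and $V(H)\subseteq V(\phi(H))$ we get $V(\phi(H))\cap V(K)\neq\emptyset$, so $\phi(H)\in\HH^*_K$. Hence in $\tilde{Q}[\HH_K]$ every non-maximal vertex is a leaf attached to a vertex of the connected subgraph $Q^*[\HH^*_K]$, so $\tilde{Q}[\HH_K]$ is connected.

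The remaining step is to verify that attaching pendants preserves the sparsity class. For outerplanarity, an outerplanar embedding of $Q^*$ places every vertex on the outer face, so for each non-maximal $H$ we can draw the pendant edge $\{H,\phi(H)\}$ in the outer face and keep $H$ on the outer face, proving part (1). For bounded treewidth, given a tree decomposition $(T,\mathcal{B})$ of $Q^*$ of width $t$, pick any bag $B_x$ containing $\phi(H)$ and attach a new leaf node $y$ to $x$ in $T$ with bag $B_y=\{\phi(H),H\}$; the resulting tree decomposition of $\tilde{Q}$ still has width $\max(t,1)=t$ (as $t\ge 1$ whenever $\HH^*\neq\emptyset$), proving part (2). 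The only potential subtlety—that a non-maximal $H$ might be contained in several maximal subgraphs—is irrelevant because any single choice of $\phi(H)$ suffices for the connectivity argument above.
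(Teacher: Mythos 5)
Your proof is correct and takes essentially the same approach as the paper: attach each non-maximal $H$ as a pendant to a chosen maximal container $\phi(H)$, observe that pendant vertices preserve outerplanarity and treewidth, and check that if $H$ meets $K$ then so does $\phi(H)$, so the pendants hang off the connected subgraph $Q^*[\HH^*_K]$. One minor quibble: the parenthetical justification ``$t\ge 1$ whenever $\HH^*\neq\emptyset$'' is not literally true (a support on $\HH^*$ can be edgeless, with treewidth $0$, and adding a pendant then raises it to $1$), but this degenerate case is immaterial to the intended applications and the paper's own assertion that adding degree-one vertices leaves the treewidth unchanged carries the same implicit caveat.
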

\begin{proof}
By definition, any subgraph $H\in\mathcal{H}\setminus\mathcal{H}^*$ has a successor $H'\in\mathcal{H}^*$ in the poset $(\mathcal{H},\preceq_N)$ i.e., $H\subseteq H'$ for some $H'\in\mathcal{H}^*$.
Let $\tilde{Q}'$ be a support for $(G,\mathcal{H}^*,\mathcal{K})$, and $\tilde{Q}$ be a graph obtained from $\tilde{Q}'$ by adding a vertex for each $H\in\mathcal{H}\setminus\mathcal{H}^*$ and making it adjacent to one of its successors $H'\in\mathcal{H}^*$ (breaking ties arbitrarily).
Since $\tilde{Q}$ is obtained form $\tilde{Q}'$ by adding only degree one vertices, $\tw(\tilde{Q})=\tw(\tilde{Q}')$. Moreover, $\tilde{Q}$ remains outerplanar if $\tilde{Q}'$ is outerplanar.

We now show that $\tilde{Q}$ is an intersection support for $(G,\mathcal{H,K})$.
Let $K\in\mathcal{K}$ be arbitrary.
We claim that $\tilde{Q}[\mathcal{H}_K]$ induces a connected subgraph of $\tilde{Q}$.
Since $\tilde{Q}'$ is an intersection support for $(G,\mathcal{H}^*,\mathcal{K})$, the induced subgraph $\tilde{Q}'[\mathcal{H}^*_K]$ of $\tilde{Q}'$ is connected.
By definition, $\mathcal{H}^*\subseteq\mathcal{H}$ consists of precisely the maximal elements of $(\mathcal{H},\preceq_N)$.
Therefore, for any $H\in\mathcal{H}_K\setminus\mathcal{H}^*_K$, $\exists H'\in\mathcal{H}^*$ such that $H\subseteq H'$ and $\{H,H'\}$ is an edge in $\tilde{Q}$.
But $H'$ intersects $K$ since $H$ interacts $K$.
It follows that $H'\in\mathcal{H}^*_K$ and hence, the induced subgraph $\tilde{Q}[\mathcal{H}_K]$ of $\tilde{Q}$ is connected.
This completes the proof.
\end{proof}

\section{Contributions}\label{sec:contributions}
We give polynomial time algorithms to construct primal, dual, and intersection supports for hypergraphs defined by non-piercing subgraphs of a host graph that is outerplanar or has bounded treewidth.
For a non-piercing graph system of treewidth $t$, 
we show that the supports for the primal and dual hypergraphs have treewidth $O(2^t)$ and $O(2^{4t})$ respectively, and
the intersection support has treewidth $2^{O(2^{t})}$.

\begin{restatable}{theorem}{primalsupporttw}
\label{thm:primalsupporttw}
Let $(G,\mathcal{H})$ be a non-piercing graph system of 
treewidth $t$ with $c:V(G)\to\{\R,\B\}$.
There
is a primal support $Q$ of treewidth  at most $2^{t+2}+t$.
\end{restatable}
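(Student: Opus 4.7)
My plan is to build the tree decomposition of the support $Q$ on the same underlying tree $T$ as a given rooted tree decomposition $(T,\{B_x\}_{x\in V(T)})$ of $G$ of width $t$, but with enlarged bags $\{B'_x\}$. By the comment preceding the theorem, I may assume each bag $B_x$ induces a clique in $G$ without changing $\tw(G)$, so $|B_x|\le t+1$. For each $H\in\mathcal{H}$, let $T_H$ be the subtree of $T$ induced by nodes $x$ with $V(H)\cap B_x\ne\emptyset$ (this is a subtree because $H$ is connected), and let $t_H$ denote its root.

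The bag $B'_x$ will consist of two parts: (a) all blue vertices of $B_x$, contributing at most $t+1$ vertices; (b) for each non-empty subset $S\subseteq B_x$, at most two blue ``representative'' vertices chosen to witness, respectively, the subgraphs whose interface $H\cap B_x$ equals $S$ and that extend above $x$ in $T$, and those that extend into the subtrees below $x$. There are at most $2^{t+1}$ non-empty subsets of $B_x$, so part (b) contributes at most $2\cdot 2^{t+1}=2^{t+2}$ vertices. Thus $|B'_x|\le 2^{t+2}+t+1$, giving treewidth at most $2^{t+2}+t$ as required.

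The edges of $Q$ would be added as follows. For each $H\in\mathcal{H}$ I first fix a pivot blue vertex $p_H\in\B(H)$ located as high as possible in $T_H$ (if $\B(H)\cap B_{t_H}\ne\emptyset$, pick it there). Then, walking top-down through $T_H$, I add edges so that the blue vertices of $H$ appearing in consecutive bags along $T_H$ get connected through the chosen representatives; at each node $x\in T_H$, the representative for $S=V(H)\cap B_x$ serves as the ``handle'' to stitch together the blue vertices of $H$ that live in $B_x$ and those that live deeper in $T_H$. Every edge of $Q$ placed this way has both endpoints in some $B'_x$ by construction, satisfying the edge-coverage axiom of a tree decomposition.

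The two real obstacles are (i) the subtree axiom for $(T,\{B'_x\})$, and (ii) the bound of only two representatives per subset. For (i), representatives must be chosen consistently so that whenever a blue vertex $v$ is used as a representative at nodes $x$ and $z$, it is used at every node on the $x$--$z$ path in $T$. I would enforce this by defining representatives recursively from the root of $T$ downward, propagating the choice along $T_H$ for each $H$ that uses a given representative. For (ii), the non-piercing hypothesis is the crucial tool: given two subgraphs $H,H'$ with the same interface $S=H\cap B_x=H'\cap B_x$, the fact that $V(H)\setminus V(H')$ and $V(H')\setminus V(H)$ are each connected in $G$ lets me argue that, on each side (above or below $x$), their blue parts can be linked through a common representative rather than needing separate ones. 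Making this structural argument precise—and verifying that exactly two representatives per subset (one ``up'', one ``down'') absorb all classes of subgraphs with that interface at $x$—is the central technical step, and the place where the theorem genuinely uses non-piercing rather than mere connectedness. Finally, once (i) and (ii) are in place, checking that $\B(H)$ is connected in $Q$ is a straightforward induction along $T_H$, since within each bag the representatives are connected to $\B(H)\cap B_x$, and across each edge of $T_H$ the representatives on either side are either shared or joined by an explicit edge placed during construction.
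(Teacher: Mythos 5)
Your high-level plan — reuse the tree $T$, keep the blue vertices of each bag, and add $O(2^{t})$ ``representative'' blue vertices per bag, one per subset-trace, to restore connectivity of $\B(H)$ across bags — is indeed the shape of the paper's argument. But the proposal as written has a real gap that you acknowledge yourself: the claim that two representatives per subset of $B_x$ suffice, and the consistency of those choices along $T$, is exactly where the non-piercing hypothesis must do work, and you leave it as ``the central technical step'' rather than proving it. A plan that flags its own crux without resolving it is not yet a proof.

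The paper resolves this crux with a cleaner intermediate notion and a more carefully calibrated choice of representatives. It defines an \emph{easy tree decomposition}: one in which every $H$ that meets an adhesion set $A$ meets it at a blue vertex. If the decomposition is easy, restricting each bag to its blue vertices immediately gives a tree decomposition of a support (Lemma~\ref{lem:easytd}); no separate ``edge stitching'' argument is needed. To make a decomposition easy (Lemma~\ref{lem:makeeasy}), the paper inducts on the height of $T$ and, at a node $\rho$ with children $x,y$, adds to $B_\rho$ one blue vertex per subset $S$ of the \emph{red} part of each adhesion set $A_{x\rho}$, $A_{y\rho}$: it picks a minimal (in the containment order on $G_x$) subgraph $H\in\mathcal{M}_S$ and one blue vertex of $H$ from the already-easy bag $B'_x$. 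This gives the count $2\cdot 2^{t+1}+(t+1)$, matching your bound, but note the base is the red part of each adhesion set (so one rep per subset, two adhesion sets), not your ``subsets of $B_x$, two reps each (one up, one down)''. Your ``down'' direction is ambiguous when $x$ has two children: a subgraph can extend into $T_y$ but not $T_z$, and a single ``down'' representative cannot serve both. The paper's per-child adhesion-set scheme avoids this.

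Finally, the place where non-piercing is actually invoked is sharper than what you sketch. You argue informally that ``their blue parts can be linked through a common representative.'' The paper's argument is: if $H$ has no blue vertex in the augmented bag $B'_\rho$, then its traces on $A_{x\rho}$ and $A_{y\rho}$ are red sets $S$, $S'$; letting $H'$, $H''$ be the chosen minimal pushers for $S$, $S'$, one shows $H$ is \emph{incomparable} with each in the respective subtree (else $H$ would contain the chosen blue vertex), which by Lemma~\ref{lem:npsep}(ii) forces $H=H'$ on $G_{-x}$ and $H=H''$ on $G_{-y}$. This in turn makes $H'$ and $H''$ agree on $A_{x\rho}$ and be incomparable in both $G_x$ and $G_y$, whence $H'\setminus H''$ splits into two components, contradicting non-piercing. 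That chain — incomparability on one side forcing equality on the other via Lemma~\ref{lem:npsep} — is the precise structural fact your sketch gestures at but does not establish, and without it the two-representative (or one-per-subset-per-adhesion-set) bound is unsupported.
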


\begin{restatable}{theorem}{dualsupporttw}
\label{thm:dualsupporttw}
Let $(G,\mathcal{H})$ be a non-piercing graph system of treewidth $t$. There is a dual support $Q^*$
of treewidth at most $2^{4(t+1)}$.
\end{restatable}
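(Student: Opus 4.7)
The plan is to construct a dual support $Q^*$ whose tree decomposition refines the given tree decomposition of $G$, with bags consisting of one ``representative'' subgraph per local type at each node. The factor $2^{4(t+1)}$ will arise because each type encodes the intersection of a subgraph with a four-bag neighborhood.

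By Lemma~\ref{lem:contfree}, I may assume $\mathcal{H}$ is containment-free. Fix a rooted binary tree decomposition $(T,\mathcal{B})$ of $G$ of width $t$. For each $H\in\mathcal{H}$, the set $T_H = \{x\in V(T) : V(H)\cap B_x\neq\emptyset\}$ is a connected subtree of $T$. For each $x\in V(T)$, let $\tilde{B}_x = B_{p(x)}\cup B_x\cup B_{c_1(x)}\cup B_{c_2(x)}$ (using $\emptyset$ when the parent or a child is missing), so $|\tilde{B}_x|\leq 4(t+1)$. For $H$ with $x\in T_H$, define the \emph{type} $\tau_x(H) = V(H)\cap\tilde{B}_x$; at most $2^{4(t+1)}$ types occur at any node. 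For each realized type $\tau$ at $x$, I would choose a representative $R(x,\tau)\in\{H : \tau_x(H)=\tau\}$, and set $B'_x = \{R(x,\tau) : \tau\text{ realized at }x\}$, a set of size at most $2^{4(t+1)}$.

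Edges of $Q^*$ come in two kinds: (i) within each $B'_x$, for each $v\in B_x$, add a spanning star on $\{R(x,\tau) : v\in\tau\}$; (ii) for each $H\in\mathcal{H}$ and each $x\in T_H$ with $H\neq R(x,\tau_x(H))$, add the edge $(H, R(x,\tau_x(H)))$. To verify the support property, fix $v\in V(G)$. The subtree $T_v$ is connected; at each $x\in T_v$, intra-bag edges of type (i) link all representatives at $x$ whose type contains $v$. Every $H\in\mathcal{H}_v$ satisfies $T_v\subseteq T_H$, and attaches at each $x\in T_v$ via (ii) to a type-compatible representative. For adjacent $x,x'\in T_v$, any $v$-containing representative $R=R(x,\tau)$ has $x'\in T_R$ (since $v\in V(R)\cap B_{x'}$) and is linked, either directly when $R=R(x',\tau_{x'}(R))$ or via a cross edge, to $R(x',\tau_{x'}(R))$, which also contains $v$. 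Chaining these links across $T_v$ yields $\mathcal{H}_v$ connected in $Q^*$.

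For the tree decomposition of $Q^*$, I refine $T$ by appending short chains or pendants at each $x$ to host non-representative subgraphs (with pendant bags of size two); the main bag $B'_x$ has size at most $2^{4(t+1)}$. Validity then reduces to showing that the set $M_H$ of nodes at which $H$ is chosen as representative forms a connected subtree of $T$. The principal obstacle---and where the non-piercing hypothesis is essential---is arranging representative selection to enforce this subtree property. The plan is to fix a global priority on $\mathcal{H}$ and prove a compatibility lemma: if two non-piercing subgraphs $H$ and $H'$ share a type at a node $x$, their behaviors at neighboring tree-decomposition nodes are compatible enough that the representative role can be extended along the subtree $T_H$ without breaking connectedness of $M_H$.
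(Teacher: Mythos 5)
Your proposal sketches a plausible direction but leaves a genuine gap, and the tree-decomposition construction as stated does not work.

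The first problem is that ``validity reduces to $M_H$ being a connected subtree'' is not accurate. Even granting that $M_H$ is connected, for each $x \in T_H\setminus M_H$ you place a cross edge $(H, R(x,\tau_x(H)))$ and host it in a pendant bag hanging off $x$. Now the bags of the refined tree containing $H$ are $\{B'_x : x\in M_H\}$ together with these pendant bags scattered around $T_H\setminus M_H$. Since $H\notin B'_x$ for $x\notin M_H$, the path in the refined tree from such a pendant to any node of $M_H$ passes through nodes whose bags do not contain $H$, so the subtree condition on the bags containing $H$ fails. One could try to only add cross edges at a single carefully chosen node of $T_H$, but then the support verification breaks: for a fixed $v$, the argument needs a node $x\in T_v$ at which \emph{every} $H\in\mathcal{H}_v$ has a cross edge to a $v$-containing representative, and there may be no single node of $T_v$ that works for all $H$ simultaneously. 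The second problem is that the ``compatibility lemma'' that would make $M_H$ connected is never proved; it is only asserted that non-piercing should help. That lemma is where the real content lies, and it is not obviously true even with a global priority.

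The paper circumvents both difficulties by taking a different route. Rather than choosing one representative per type, it \emph{modifies} the subgraphs: via a post-order traversal, at each adhesion set $A_{xz}$ and each nonempty $S\subseteq A_{xz}$ it picks a minimal ``pusher'' $H\in\mathcal{M}_S$ and replaces every $H'$ with $H'\cap A_{xz}=S$ by $H'|_x\setminus H|_x$ whenever this is nonempty. Non-piercing (via Lemma~\ref{lem:npsep}) is what guarantees that the pushed subgraph equals $H'\setminus H$ and is therefore still connected in $G$, and that two un-pushed subgraphs with the same $S$ have identical traces in $G_x$. This makes $\unique(\mathcal{H}')$ at most $2^{4(t+1)}$-sparse with respect to $(T,\mathcal{B})$, and then the $k$-SDS construction applies directly: since the new subgraphs are genuinely connected subgraphs of $G$, their trace on $T$ is automatically a subtree, so the tree decomposition of the support is inherited from $(T,\mathcal{B})$ with no need for an $M_H$-connectivity argument. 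Non-representatives (the pushed-out copies in $\mathcal{H}'\setminus\unique(\mathcal{H}')$) are attached as degree-one pendants by the containment-free reduction (Lemma~\ref{lem:contfree}), and support for the original $\mathcal{H}$ follows because each push leaves a connecting edge that realizes adjacency in some bag. Your attempt keeps the subgraphs intact and tries to enforce consistency in representative choice across the tree; the paper instead changes the subgraphs so that consistency is automatic. If you want to pursue your line, the missing piece is precisely a proof that a priority-based representative selection makes each $M_H$ a subtree, plus a fix for where to host the cross edges at $x\in T_H\setminus M_H$; without both, the argument is incomplete.
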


\begin{restatable}{theorem}{intsupporttw}
\label{thm:intsupporttw}
Let $(G,\mathcal{H,K})$ be a non-piercing intersection system of treewidth $t$.
Then, there is an intersection support $\tilde{Q}$ 
of treewidth at most $2^{2^{t+4}+4(t+1)}$. 
\end{restatable}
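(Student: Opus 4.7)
The target exponent $2^{t+4}+4(t+1)$ equals $4((2^{t+2}+t)+1)$, so $2^{2^{t+4}+4(t+1)} = 2^{4(t'+1)}$ for $t' = 2^{t+2}+t$, exactly the dual-support treewidth of Theorem~\ref{thm:dualsupporttw} applied to a host of treewidth matching the primal-support bound of Theorem~\ref{thm:primalsupporttw}. Accordingly, my plan is to reduce the intersection-support problem to a dual-support problem on a non-piercing auxiliary graph system $(\hat G,\hat\HH)$ whose host $\hat G$ has treewidth at most $2^{t+2}+t$, and then invoke Theorem~\ref{thm:dualsupporttw} on $(\hat G,\hat\HH)$.

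\textbf{Reduction to a dual-support problem.} By Lemma~\ref{lem:contfree} I may assume $\HH$ is containment-free. I will build $(\hat G,\hat\HH)$ together with a bijection $\phi:\HH\to\hat\HH$ and, for each $K\in\KK$, a designated vertex $v_K\in V(\hat G)$ satisfying $\hat\HH_{v_K}=\phi(\HH_K)$. Then any dual support for $(\hat G,\hat\HH)$ pulls back under $\phi$ to an intersection support for $(G,\HH,\KK)$, since the dual connectivity of $\hat\HH_{v_K}$ is exactly the intersection connectivity of $\HH_K$. The simplest candidate, namely adding a fresh vertex $v_K$ per $K$ adjacent to all of $V(K)$ and taking $\hat H=V(H)\cup\{v_K:V(H)\cap V(K)\ne\emptyset\}$, already satisfies the bijection, the identity $\hat\HH_{v_K}=\phi(\HH_K)$, and non-piercing of $\hat\HH$ in $\hat G$: any $v_K\in V(\hat H_1)\setminus V(\hat H_2)$ attaches to a witness $u\in V(K)\cap V(H_1)$ that must miss $V(H_2)$, hence lies in the connected set $V(H_1)\setminus V(H_2)$. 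Its defect is that $\tw(\hat G)$ can grow with $|\KK|$, because a single bag of $G$ may be met by unboundedly many $K$'s.

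\textbf{Primal scaffold to control treewidth.} To bring $\tw(\hat G)$ down to $2^{t+2}+t$, I will replace the dense bipartite attachment $v_K\leftrightarrow V(K)$ by a sparse one structured by a primal support obtained from Theorem~\ref{thm:primalsupporttw}, applied to a coloured auxiliary system derived from $(G,\HH,\KK)$ in which representatives of the $K$'s play the role of blue vertices. The primal-support guarantee, that $\B(H)$ is connected in the scaffold for every $H\in\HH$, is what allows each $v_K$ to attach to only a bounded number of anchor vertices in $V(K)$ while still making every $\hat H$ connected for $H\in\HH_K$, and while keeping the treewidth within the primal bound.

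\textbf{Finish and main obstacle.} Once $(\hat G,\hat\HH)$ is a non-piercing system of treewidth at most $2^{t+2}+t$, Theorem~\ref{thm:dualsupporttw} produces a dual support of treewidth at most $2^{4(2^{t+2}+t+1)}=2^{2^{t+4}+4(t+1)}$; via $\phi$ this is the desired intersection support. The hard part is the scaffold step: choosing the colouring of the auxiliary system so the anchor edges correctly encode the incidences $V(H)\cap V(K)\ne\emptyset$, and verifying that non-piercing of $\hat\HH$ in the pruned $\hat G$ survives the removal of the dense $v_K\leftrightarrow V(K)$ attachment, using non-piercing of both $\HH$ and $\KK$ in $G$ together with the primal-support connectivity. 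The remaining ingredients, namely setting up $\phi$, checking $\hat\HH_{v_K}=\phi(\HH_K)$, and the closing arithmetic, are routine.
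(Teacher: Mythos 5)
Your proposal identifies the correct two‑stage composition (a primal‑type width blowup $t\mapsto 2^{t+2}+t$ followed by the dual‑support blowup $t'\mapsto 2^{4(t'+1)}$, exactly matching the stated exponent), but the route you take — reducing to a dual‑support problem on an auxiliary graph $\hat G$ with a new vertex $v_K$ per $K\in\KK$ — has a gap that you flag yourself and do not close, and I do not see how to close it.

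With the dense attachment ($v_K$ adjacent to all of $V(K)$), non‑piercing of $\hat\HH$ works but $\tw(\hat G)$ can grow with $|\KK|$, as you note. With a sparse anchor set $A_K\subsetneq V(K)$, even \emph{connectivity} of $\hat H$ can fail: $\hat H=V(H)\cup\{v_K: K\cap H\neq\emptyset\}$ needs each such $v_K$ to have a neighbour in $V(H)$, i.e.\ $A_K\cap V(H)\neq\emptyset$, yet many $H$'s can meet $K$ at pairwise disjoint vertex sets (non‑piercing of $\HH$ does not forbid this), so no bounded anchor set can hit them all. Non‑piercing of $\hat\HH$ has the same problem one level up. The ``primal scaffold'' you invoke to repair this is only sketched, and the primal‑support guarantee — that $\B(H)$ is connected inside the scaffold for each $H$ — does not tell you which vertices of $V(K)$ to choose as anchors of $v_K$, nor bound how many you need. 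This is not a routine remaining ingredient; it is the heart of the proof.

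The paper's proof never leaves $G$. It colours $v$ blue iff $\HH_v\neq\emptyset$ and $\KK_v\neq\emptyset$, applies the make‑easy transformation (Lemma~\ref{lem:makeeasy}) to the non‑piercing family $\KK$ under that colouring to obtain a tree decomposition of $G$ of width $t'=2^{t+2}+t$ that is $\KK$‑easy, then applies the dual sparsification (Lemma~\ref{lem:mvi}) to $\HH$ with respect to that same decomposition. The key bridging observation, which your plan is missing, is Lemma~\ref{lem:easyint}: the $k$‑SDS dual support built on a $\KK$‑easy tree decomposition is automatically an intersection support. That replaces your auxiliary‑graph reduction with a purely decompositional argument and sidesteps the anchor‑set obstruction entirely.
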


For the primal and dual settings, we show that the exponential blow-up in the treewidth of the support is sometimes
necessary.

\begin{restatable}{theorem}{primallb}
\label{thm:primallb}
For any $m\in\mathbb{N}$,
there is a non-piercing graph system $(G,\mathcal{H})$ 
with $c:V(G)\to\{\R,\B\}$ s.t. $\tw(G)\le m$, but
$\tw(Q)\ge \frac{2^{\lfloor m/2\rfloor}}{\sqrt{m}}$ for any primal support $Q$.
\end{restatable}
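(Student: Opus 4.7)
The plan is to exhibit, for each $m\in\mathbb{N}$, a non-piercing graph system $(G,\mathcal{H})$ with $\tw(G)\le m$ whose primal support is forced to have treewidth at least $2^{\lfloor m/2\rfloor}/\sqrt{m}$. Set $k=\lfloor m/2\rfloor$ and take $G$ to consist of a red clique $R=\{r_1,\ldots,r_k\}$ together with one blue vertex $b_S$ for each $(k/2)$-subset $S\subseteq[k]$, where $b_S$ is adjacent in $G$ exactly to those $r_i$ with $i\in S$. A star-shaped tree decomposition with central bag $R$ and one pendant bag $R\cup\{b_S\}$ per blue vertex certifies $\tw(G)\le k\le m$. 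By Stirling, the number of blue vertices is $\binom{k}{k/2}\ge 2^k/\sqrt{2k}=\Omega(2^{\lfloor m/2\rfloor}/\sqrt{m})$, which is the quantity I ultimately need.

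The design target for $\mathcal{H}$ is to force any primal support $Q$ to contain the complete graph $K_n$ on all $n:=\binom{k}{k/2}$ blue vertices, so that $\tw(Q)\ge n-1\ge 2^{\lfloor m/2\rfloor}/\sqrt{m}$. For each unordered pair $\{S,T\}$ of distinct middle-layer subsets I would introduce a subgraph $H_{S,T}\subseteq G$ of the form $\{b_S,b_T\}\cup R_{S,T}$, with $\B(V(H_{S,T}))=\{b_S,b_T\}$ so that the primal-support constraint directly forces the edge $b_Sb_T\in E(Q)$. Connectedness of each $H_{S,T}$ in $G$ is immediate whenever $R_{S,T}$ meets both $S$ and $T$, using the red clique as a bridge; the natural default is $R_{S,T}=S\triangle T$.

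The main obstacle is non-piercing. Because $G$ has no blue-blue edges, $V(H_{S,T})\setminus V(H_{S',T'})$ is connected only when the red portion of the difference contains a vertex adjacent to each remaining blue. The default $R_{S,T}=S\triangle T$ disposes of every configuration except the residual case $\{S,T\}\cap\{S',T'\}=\emptyset$ with $S\triangle T=S'\triangle T'$, as in $\{\{1,2\},\{3,4\}\}$ versus $\{\{1,3\},\{2,4\}\}$ for $k=4$, where the red difference is empty. My plan for these collisions is to enlarge $G$ with a small auxiliary clique of "anchor" red vertices, one per equivalence class of pairs under the common-symmetric-difference relation; the anchor for a class is made adjacent in $G$ to both $b_S$ and $b_T$ for every pair it labels, and the enlarged $R_{S,T}$ includes precisely its own anchor. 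These anchors live in a single additional bag in the tree decomposition, and a careful book-keeping argument (exploiting the fact that anchors interact only with the blues they label) controls the increase in $\tw(G)$ enough that, after taking $k=\lfloor m/2\rfloor-O(1)$, the total remains at most $m$.

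I expect the most intricate part of the proof to be a case analysis on $|\{S,T\}\cap\{S',T'\}|\in\{0,1\}$ verifying simultaneously (i) every residual non-piercing collision is broken by the anchor mechanism, (ii) anchor insertions do not introduce new collisions among themselves, and (iii) the treewidth accounting for $G$ closes with the parameter $m$. Once non-piercing is in hand, the downstream argument is a one-liner: the primal hypergraph forces every edge $b_Sb_T$ in $Q$, hence $Q\supseteq K_{\binom{k}{k/2}}$, and Stirling's estimate $\binom{k}{k/2}\ge 2^k/\sqrt{2k}$ completes the bound $\tw(Q)\ge 2^{\lfloor m/2\rfloor}/\sqrt{m}$.
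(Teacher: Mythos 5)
Your high-level strategy (blow up the blue set to $\binom{k}{\lfloor k/2\rfloor}$ and force all treewidth into the support) matches the paper, but the construction you sketch does not work, and the paper's construction is genuinely different. There are two concrete problems.

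First, your characterization of the residual collisions for the default $R_{S,T}=S\triangle T$ is incomplete. You claim the only bad case is $\{S,T\}\cap\{S',T'\}=\emptyset$ with $S\triangle T = S'\triangle T'$, but in fact $V(H_{S,T})\setminus V(H_{S',T'})$ already loses all of its red vertices whenever $S\triangle T\subseteq S'\triangle T'$, and even when some red survives it can fail to touch both $b_S$ and $b_T$ (the survivors must hit both $S\setminus T$ and $T\setminus S$). As an explicit counterexample with $k=6$, take $S=\{1,2,3\}$, $T=\{1,4,5\}$ (so $S\triangle T=\{2,3,4,5\}$) and $S'=\{2,3,4\}$, $T'=\{1,5,6\}$ (so $S'\triangle T'=\{1,2,3,4,5,6\}$): the four sets are pairwise distinct, $S\triangle T\ne S'\triangle T'$, yet $(S\triangle T)\setminus(S'\triangle T')=\emptyset$ and $V(H_{S,T})\setminus V(H_{S',T'})$ is the disconnected pair $\{b_S,b_T\}$.

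Second, and more fatally, the anchor mechanism cannot close the treewidth accounting. The number of distinct symmetric differences $S\triangle T$ among $\lfloor k/2\rfloor$-subsets of $[k]$ is $\sum_{j\ge 1}\binom{k}{2j}\approx 2^{k-1}$. If the anchors form a clique, as your construction requires, then $G$ already contains a clique of size $\approx 2^{k-1}$, forcing $\tw(G)\ge 2^{k-1}-1\gg m$, contradicting the bound you are supposed to preserve. Dropping the anchor clique does not rescue it either: any single anchor $a_D$ is adjacent to every $b_S$ with $S\triangle (S\triangle D)=D$, which is a large family of blues, and the tree-decomposition bookkeeping you defer is precisely where this blows up. The paper avoids all of this by not trying to force a clique on the blues at all. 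It uses two independent red coordinate sets $R$ and $C$, each of size $n=\lfloor m/2\rfloor$, a complete bipartite host graph on $(R\cup C)\,\cup\, B$, and indexes an $N\times N$ grid $B$ of blue vertices by pairs of middle-layer subsets $(R_i,C_j)$; each hyperedge contains exactly two grid-adjacent blues together with the full subsets $R_i$ and $C_{j+1}$ (or $R_{i+1}$ and $C_j$). This forces the support to contain an $N\times N$ grid as a subgraph, which already has treewidth $N=\binom{n}{\lfloor n/2\rfloor}\ge 2^n/\sqrt{2n}$. Because the hyperedges carry full middle-layer subsets (not symmetric differences) and the two coordinates are independent, the red part of every set-difference is automatically nonempty and adjacent to all remaining blues via the complete bipartite structure, so non-piercing holds with no anchoring and no exceptional-case patching, and $\tw(G)\le 2n\le m$ is immediate from the bipartition side of size $2n$.
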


\begin{restatable}{theorem}{duallb}
\label{thm:dualtwlb}
For any $m\in\mathbb{N}$,
there is a non-piercing graph system $(G,\mathcal{H})$
s.t. $\tw(G)\le m$ and for any dual support $Q^*$,
$\tw(Q^*)\ge \frac{2^{\lfloor m/2\rfloor}}{\sqrt{m}}$.   
\end{restatable}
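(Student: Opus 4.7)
The plan is to force the dual support to contain a large Boolean hypercube as a subgraph. Set $t=\lfloor m/2\rfloor$, and let $G$ consist of a clique $K$ on $2t$ vertices, partitioned into $t$ pairs $\{x_i^0,x_i^1\}$, together with a pendant vertex $v_e$ for every edge $e=\{y,y'\}$ of the Boolean hypercube $Q_t$; if $e$ differs in coordinate $i_e$, make $v_e$ adjacent precisely to $\{x_j^{y_j}:j\neq i_e\}$, which is well defined because $y_j=y'_j$ off $i_e$. Define $\HH=\{H_y\}_{y\in\{0,1\}^t}$ by $V(H_y)=\{x_j^{y_j}:j\in[t]\}\cup\{v_e:y\in e\}$. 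A tree decomposition with root bag $V(K)$ and one pendant bag $\{v_e\}\cup N_G(v_e)$ attached per $e$ immediately gives $\tw(G)\le 2t-1\le m$.

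For non-piercing, each $H_y$ is connected since its clique part is a $t$-clique and every pendant $v_e\in H_y$ attaches to some $x_j^{y_j}$ in it. To verify $H_y\setminus H_{y'}$ is connected, let $D$ be the set of coordinates on which $y$ and $y'$ disagree: the clique part of the difference is the $|D|$-clique $\{x_j^{y_j}:j\in D\}$, and a pendant $v_e$ in the difference (i.e.\ with $y\in e$, $y'\notin e$) has neighbors $\{x_j^{y_j}:j\in D\setminus\{i_e\}\}$ there. A short case split on $|D|\ge 2$ versus $D=\{i\}$ (where the defining condition $y'\notin e$ forces $i_e\ne i$) shows that $D\setminus\{i_e\}$ is always nonempty, so each pendant attaches to the clique part and the difference is connected.

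Finally, each pendant $v_e$ with $e=\{y,y'\}$ satisfies $\HH_{v_e}=\{H_y,H_{y'}\}$, so any dual support $Q^*$ must contain the edge $\{H_y,H_{y'}\}$; identifying $H_y$ with $y$ embeds $Q_t$ as a subgraph of $Q^*$, and the further conditions $\HH_{x_j^b}=\{H_y:y_j=b\}$ correspond to sub-hypercubes and are automatically connected in $Q_t$. Treewidth being monotone under subgraph containment on the same vertex set gives $\tw(Q^*)\ge\tw(Q_t)$, and the known bound $\tw(Q_t)=\Theta(2^t/\sqrt{t})$ then yields $\tw(Q^*)\ge 2^{\lfloor m/2\rfloor}/\sqrt{m}$ up to the implicit constant. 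The hard part will be the non-piercing verification for \emph{arbitrary} pairs $y,y'$ and not only hypercube neighbors: the subtle $|D|=1$ case is exactly where the choice of $v_e$'s neighborhood must be fine-tuned so that the pendant is never isolated in the difference.
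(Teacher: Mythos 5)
Your construction is correct and genuinely different from the paper's. The paper's gadget is a complete bipartite graph with $2n$ ``coordinate'' vertices ($R\cup C$) on one side and a $(2N+1)\times(2N+1)$ grid $B$ of isolated vertices on the other ($n=\lfloor m/2\rfloor$, $N=\binom{n}{\lfloor n/2\rfloor}$); subgraphs are $2\times 2$ blocks of the grid arranged in an overlapping checkerboard pattern, each block carrying a distinct antichain element $R_i\cup C_j$ so the family is non-piercing. Interior grid vertices then lie in exactly two subgraphs, forcing the dual support to contain the $N\times N$ grid, whose treewidth is exactly $N$, and the explicit estimate $\binom{n}{\lfloor n/2\rfloor}\ge 2^n/\sqrt{2n}$ delivers the stated bound directly. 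You instead encode the hypercube directly: a $2t$-clique for the coordinates, one pendant per hypercube edge, and $H_y$ picking up the $t$ clique vertices coding $y$ together with all incident pendants, so each pendant lies in exactly $H_y$ and $H_{y'}$. Your non-piercing check is right; in particular, the observation that when $|D|=1$, say $D=\{i\}$, any pendant $v_e$ with $y\in e$ and $y'\notin e$ must satisfy $i_e\neq i$ (otherwise the other endpoint of $e$ is exactly $y'$) is the key fact that closes the dangerous case.

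The one gap is the last numerical step, which you yourself flag. You need $\tw(Q_t)\ge 2^t/\sqrt{m}$ with $m\approx 2t$, i.e.\ $\tw(Q_t)\gtrsim 2^t/\sqrt{2t}$, but the citable lower bounds on the treewidth of the hypercube (via Harper's isoperimetric inequality or brambles) come with an additional multiplicative constant strictly less than one, and so only establish $\tw(Q^*)=\Omega(2^{\lfloor m/2\rfloor}/\sqrt m)$ rather than the bound as written. This is a real mismatch with the theorem's statement, though not with its content: the paper's grid construction sidesteps it because the treewidth of an $N\times N$ grid is exactly $N$, so there is no hidden constant to track. To match the statement precisely you would either have to pin down the exact isoperimetric constant for $Q_t$, or arrange (as the paper does) for the forced subgraph to be a grid on a pair of antichains rather than the hypercube itself.
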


In case the host graph $G$ is outerplanar, we show that the non-piercing intersection system $(G,\HH,\KK)$ admits an outerplanar support.

\begin{restatable}{theorem}{outerplanar}
\label{thm:np-outintsupport}
If $(G,\mathcal{H,K})$ is an outerplanar non-piercing system, then there is an intersection support for $(G,\mathcal{H,K})$ which is an outerplanar graph.
\end{restatable}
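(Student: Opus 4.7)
The plan is to give a direct constructive proof that builds an outerplanar intersection support by leveraging the outerplanar embedding of $G$. I will construct $\tilde Q$ together with an explicit planar drawing having all of its vertices on the outer face.

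First, I reduce to a clean setting. By \Cref{lem:contfree}, I may assume $\mathcal{H}$ is containment-free. Since adding edges to $G$ preserves the non-piercing property of both $\mathcal{H}$ and $\mathcal{K}$, I further assume $G$ is a maximal outerplanar graph; then $V(G)$ lies on a single Hamiltonian cycle $C$ bounding the outer face, every internal face is a triangle, and the weak dual of $G$ is a tree $T$.

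I then assign each $H \in \mathcal{H}$ a canonical position on $C$, for instance the first vertex of $V(H)$ encountered in a fixed clockwise traversal of $C$. This places the vertices of $\tilde Q$ in cyclic order on the boundary of a disk, providing the outer face of the eventual outerplanar drawing. Next, I build the edge set of $\tilde Q$ incrementally: for each $K \in \mathcal{K}$, I identify the connected components of $\mathcal{H}_K$ in the current graph and add a minimal set of edges, drawn as chords inside the disk bounded by $C$, that merge these components. By construction, each $\tilde Q[\mathcal{H}_K]$ is then connected.

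The heart of the argument, and the main obstacle, is showing that these chords can always be added without crossings, so that $\tilde Q$ is outerplanar. Here non-piercing plays the decisive role: I plan to prove a structural lemma saying that for every $K \in \mathcal{K}$, the elements of $\mathcal{H}_K$ appear in a nested (rather than linked) pseudo-arc pattern along $C$, so that the merging chords can always be realized as non-crossing arcs. The proof of this lemma would analyze the pairwise cyclic arrangement of $V(H)$ and $V(H')$ on $C$ for non-piercing $H,H' \in \mathcal{H}$, ruling out the linked cyclic configurations that would force a crossing or a $K_{2,3}$ minor in $\tilde Q$. With this structural lemma in place, an invariant-preservation argument along the processing of $\mathcal{K}$ delivers both outerplanarity of $\tilde Q$ and the required connectivity of $\tilde Q[\mathcal{H}_K]$ for every $K$.
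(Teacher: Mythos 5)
Your framing matches the paper's opening moves: reduce to a maximal outerplanar $G$ with Hamiltonian outer cycle $C$, reduce $\HH$ by \Cref{lem:contfree}, and place the $\HH$-vertices on a disk boundary with chords added inside. But the heart of your argument is deferred to a ``structural lemma'' and an ``invariant-preservation argument'' that you do not prove, and that is precisely where the substance of the theorem lives.

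Two concrete gaps. First, you invoke that $\HH_K$ ``appears in a nested (rather than linked) pseudo-arc pattern'' as if this followed directly from non-piercing; the paper has to formalize exactly this as the \emph{strong $axax$-free} property (\Cref{lem:npaxax-free}), which is $axax$-free \emph{plus} an intersection property between $\HH$ and $\KK$. The paper explicitly exhibits an $axax$-free cycle system that is not strong $axax$-free and whose only intersection support is $K_4$ (Figure~\ref{fig:nonopsupport}), so ``non-piercing rules out linked configurations'' alone will not carry your argument; you must also rule out the disjoint-but-alternating $H$--$K$ pattern. Second, the crucial question is not whether the chords for a \emph{single} $K$ can be drawn crossing-free, but whether chords added across \emph{all} $K\in\KK$ (in whatever processing order) form a globally non-crossing family without disturbing earlier connections. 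Your ``canonical position'' assignment (first vertex of $H$ in a clockwise scan, ties arbitrary) is not enough to guarantee this: once a subgraph has multiple runs on $C$, collapsing it to a single representative vertex discards information about which runs a later $K$ meets, and an incremental greedy chord-insertion can paint itself into a corner.

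The paper resolves both issues with a double recursion rather than a one-pass sweep: it proves \Cref{thm:outintsupport} for strong $axax$-free cycle systems by induction on the total excess number of runs of $\HH$ (\Cref{thm:outintsupport}), each time splitting the cycle at the shortest chord of some $H_0$, forming two \emph{derived cycle systems} that remain strong $axax$-free (\Cref{prop:propp}), recursing, and gluing the two outerplanar supports along $H_0$. The single-run base case (\Cref{lem:basecase}) is itself proved by a second induction on the runs of $\KK$, with \Cref{lem:cyclesupport} as the innermost base. This recursive split-and-glue is what guarantees that the final chord set is nested/non-crossing and that supports for $\HH_K$ remain connected across the split boundary. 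Your proposal would need to discover an equivalent invariant to make the incremental approach succeed; as written it asserts the conclusion at exactly the point where the paper does the work.
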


Theorem \ref{thm:np-outintsupport} also implies the existence of primal and dual supports for an outerplanar
non-piercing graph system.
The authors in \cite{buchin2011planar} left it open, the problem of deciding if a hypergraph admits an outerplanar support.
While we don't answer their question, we believe that the tools we develop for constructing outerplanar support can eventually help in resolving this decision problem

The rest of the paper is organized as follows. In Section \ref{sec:treewidth}, we present the construction of support for bounded treewidth graphs systems, and in Section \ref{sec:lowerbounds}, we show lower bounds for the treewidth of primal and dual supports.
In Section \ref{sec:outerplanar},
we present results on outerplanar graphs.
We conclude in Section \ref{sec:conclusion}.

\section{Support of Bounded Treewidth}
\label{sec:treewidth}
In this section, we show that if $(G,\mathcal{H,K})$ is a non-piercing system of treewidth $t$, then there exists an intersection support
of treewidth $2^{O(2^{t})}$. In order to show this, we 
construct a primal and dual supports of treewidth $O(2^{t})$ and $O(2^{4t})$ respectively, 
and use these to construct an intersection support.
Further, our construction is FPT in $\tw(G)$.
First, we collect some tools required for the construction of supports.

For a graph system $(G,\mathcal{H})$, let $(T,\mathcal{B})$ 
denote a tree decomposition of $G$ of width $t$. Let $CC(G)$ denote the
\emph{chordal completion} of $G$, i.e., we add edges so that
all vertices in a bag are pairwise adjacent. 
Clearly, a chordal completion does not increase the width of the tree decomposition.
Thus if $(T,\BB)$ is of width $\tw(G)$, then $\tw(CC(G))=\tw(G)$.
Moreover, if $\HH$ is a collection of non-piercing subgraphs of $G$, they remain non-piercing when induced on $CC(G)$ since the underlying hypergraph does not change.
Further, it is straightforward that computing a support w.r.t. $CC(G)$ is equivalent
to computing a support w.r.t. $G$.
Hence, we assume 
throughout this chapter that, $G$ is a \emph{chordal graph}\footnote{A graph is chordal, if there is no induced cycle of length at least 4.}.

We use the following notations: We assume throughout this section that $T$ be a binary tree rooted at node $\rho$. For any node $x$ of $T$, we use $T_x$ to denote the sub-tree rooted at $x$.
Let $\BB_x$ denote the set of bags at the nodes in $T_x$.
The subgraph $G_x=\cup_{z\in T_x}B_z$, of $G$ is the graph induced on the vertices corresponding
to the bags in $\BB_x$.
Note that $(T_x,\BB_x)$ induces a tree decomposition of $G_x$.
We use $G_{-x}$ to denote the subgraph induced on $V(G)\setminus V(G_x)$.

Let $\HH$ be a collection of subgraphs of $G$.
For $H\in\mathcal{H}$, we let $H|_x=G_x\cap H$ 
and $\mathcal{H}|_x=\{H|_x: H\in\mathcal{H}\}$.
Similarly, $H|_{-x}=H\cap G_{-x}$ and $\mathcal{H}|_{-x}=\{H|_{-x}: H\in\mathcal{H}\}$.
Then, $(G_x,\mathcal{H}|_x)$ and $(G_{-x},\HH_{-x})$ denote
the two induced graph systems on $G_x$ and $G_{-x}$ respectively.

For an edge $\{x,y\}$ of $T$, we use $A_{xy}$ to denote the adhesion set $B_x\cap B_y$.
For $A\subseteq V(G)$, let
$\mathcal{H}_A=\{H\in\mathcal{H}: A\cap H\neq\emptyset\}$, and
for $S\subseteq A$,
$\mathcal{H}^=_S=\{H\in\mathcal{H}_A: H\cap A = S\}$.

The next two lemmas follow
from the fact that the subgraphs considered are non-piercing.
For two sets $A$ and $B$ on the same ground set, we say that $A$ and $B$ \emph{properly intersect} 
if $A\setminus B\neq\emptyset$ and $B\setminus A\neq\emptyset$.

\begin{lemma}
\label{lem:npsep}
Let $(G,\mathcal{H})$ be a non-piercing graph system with
a tree decomposition $(T,\mathcal{B})$ of $G$.
Let $e=\{x,y\}\in E(T)$ be an edge in $T$ where $x$ is a child of $y$.
Then for $H, H'\in\mathcal{H}_{A_{xy}}$, the following holds:
$(i)$ If $A_{xy}\cap H=A_{xy}\cap H'$ and $H|_x\subset H'|_x$, then
$H'|_{-x}\subseteq H|_{-x}$. $(ii)$ If $A_{xy}\cap H=A_{xy}\cap H'$, 
and $H|_x$ and $H'|_x$ properly intersect,
then $H|_{-x}=H'|_{-x}$, and $(iii)$ If $H\cap A_{xy}\subset H'\cap A_{xy}$, and $H|_x$ and $H'|_x$ properly intersect, then $H|_{-x}\subseteq H'|_{-x}$.
\end{lemma}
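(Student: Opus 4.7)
The three parts share a single underlying mechanism, so I would prove them uniformly. The plan rests on two facts. First, by the standard tree-decomposition property, $A_{xy}=B_x\cap B_y$ is a separator in $G$: any path from a vertex in $V(G_x)\setminus A_{xy}$ to a vertex in $V(G_{-x})$ must pass through a vertex of $A_{xy}$. Second, by non-piercing, for any $H,H'\in\mathcal{H}$ both $V(H)\setminus V(H')$ and $V(H')\setminus V(H)$ induce connected subgraphs of $G$. In each of $(i)$–$(iii)$ I will assume the conclusion fails, locate witnesses of some set difference $H\setminus H'$ (or $H'\setminus H$) on both sides of the separator $A_{xy}$, then show from the hypothesis that this difference is disjoint from $A_{xy}$, contradicting non-piercing.

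For $(i)$, assume $A_{xy}\cap H=A_{xy}\cap H'$ and $H|_x\subsetneq H'|_x$, and suppose some $v\in H'|_{-x}\setminus H|_{-x}$. Since $H$ and $H'$ agree on $A_{xy}$, the strict inclusion $H|_x\subsetneq H'|_x$ yields a witness $u\in V(G_x)\setminus A_{xy}$ with $u\in H'\setminus H$, and $v\in V(G_{-x})\subseteq V(G)\setminus V(G_x)$ lies in $H'\setminus H$. By non-piercing $H'\setminus H$ is connected in $G$, so there is a $u$-$v$ path inside $H'\setminus H$. That path must cross $A_{xy}$, but $A_{xy}\cap(H'\setminus H)=(A_{xy}\cap H')\setminus(A_{xy}\cap H)=\emptyset$, a contradiction; hence $H'|_{-x}\subseteq H|_{-x}$.

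For $(ii)$, under the same $A_{xy}$-agreement the proper intersection of $H|_x$ and $H'|_x$ produces witnesses $u\in(H\setminus H')\cap(V(G_x)\setminus A_{xy})$ and $u'\in(H'\setminus H)\cap(V(G_x)\setminus A_{xy})$. If $H|_{-x}\ne H'|_{-x}$, then without loss of generality there exists $v\in H|_{-x}\setminus H'|_{-x}\subseteq V(G)\setminus V(G_x)$, lying in $H\setminus H'$. Now $H\setminus H'$ is connected yet $A_{xy}\cap(H\setminus H')=\emptyset$ by the equality of $A_{xy}$-traces, so no $u$-$v$ path in $H\setminus H'$ can exist, a contradiction; hence $H|_{-x}=H'|_{-x}$. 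For $(iii)$, assume $H\cap A_{xy}\subsetneq H'\cap A_{xy}$ and that $H|_x,H'|_x$ properly intersect. The witness $u\in H|_x\setminus H'|_x$ cannot lie in $A_{xy}$ (that would place $u$ in $H\cap A_{xy}$ but not $H'\cap A_{xy}$, against the hypothesis), so $u\in V(G_x)\setminus A_{xy}$. If some $v\in H|_{-x}\setminus H'|_{-x}$ existed, then $v\in V(G)\setminus V(G_x)$ and $v\in H\setminus H'$, and by non-piercing $H\setminus H'$ would contain a $u$-$v$ path; but $A_{xy}\cap(H\setminus H')=(H\cap A_{xy})\setminus(H'\cap A_{xy})=\emptyset$ by the strict inclusion, contradiction, giving $H|_{-x}\subseteq H'|_{-x}$.

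The only real obstacle is bookkeeping: in each part one must verify, from the precise form of the hypothesis on $A_{xy}\cap H$ versus $A_{xy}\cap H'$, that the relevant difference $H\setminus H'$ or $H'\setminus H$ is in fact disjoint from $A_{xy}$, and that the witnesses produced on the $G_x$-side lie in $V(G_x)\setminus A_{xy}$ rather than inside $A_{xy}$ (where a path would not have to cross the separator non-trivially). Once those two checks are made correctly, non-piercing and the separator property close each case immediately.
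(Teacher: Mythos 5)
Your proof is correct and is precisely the detailed expansion of the argument the paper compresses into one line ("follows immediately from non-piercing and the fact that $A_{xy}$ is a separator"): locate a witness of $H\setminus H'$ (or $H'\setminus H$) on each side of $A_{xy}$, check the hypothesis forces this difference to miss $A_{xy}$, and contradict connectivity. Same approach as the paper, just written out.
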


\begin{proof}
The proofs follow immediately from the fact that $\mathcal{H}$
are non-piercing and the adhesion set corresponding to 
any edge in $E(T)$ is a separator in $G$.
\end{proof}

\begin{lemma}
\label{lem:sepnp}
Let $(G,\mathcal{H})$ be a non-piercing graph system with
a tree decomposition $(T,\mathcal{B})$ of $G$ 
For any node $x$ of $G$, the graph system
$(G_x,\HH_x)$ is non-piercing and $(T_x,\BB_x)$
is a tree decomposition of $G_x$.
\end{lemma}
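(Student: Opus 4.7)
The plan is to verify the two assertions separately; both are trivial when $x$ is the root (since then $T_x=T$ and $G_x=G$), so I assume $x$ has a parent $p$ in $T$. The tree-decomposition claim is mostly bookkeeping: $(T_x,\mathcal{B}_x)$ inherits both axioms from $(T,\mathcal{B})$. For each $v\in V(G_x)$, the bags of $T$ containing $v$ form a subtree $S_v$ of $T$, and $S_v\cap T_x$ is again a subtree of $T_x$, which gives the subtree axiom. For an edge $\{u,v\}$ of $G_x$, pick any bag $B_z$ of $T$ containing both endpoints; if $z\in T_x$ we are done, otherwise $z$ lies strictly above $x$ while $S_u$ and $S_v$ both meet $T_x$, so by connectivity of subtrees both $S_u$ and $S_v$ must contain the node $x$, and $B_x\in\mathcal{B}_x$ then witnesses the edge.

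For the non-piercing claim, I fix $H_1,H_2\in\mathcal{H}$ and show that
\[
V(H_1|_x)\setminus V(H_2|_x) \;=\; \bigl(V(H_1)\setminus V(H_2)\bigr)\cap V(G_x)
\]
induces a connected subgraph of $G_x$. Pick any two vertices $u,v$ in this set, and use non-piercing of $\mathcal{H}$ in $G$ to choose a path $P$ from $u$ to $v$ in $G$ whose vertices lie entirely in $V(H_1)\setminus V(H_2)$. The only obstacle is that $P$ may leave $V(G_x)$; setting $A=A_{xp}$, the adhesion set $A$ separates $V(G_x)\setminus A$ from $V(G_{-x})\setminus A$ in $G$, so each maximal sub-path of $P$ contained in $V(G_{-x})\setminus A$ is flanked by two vertices $a,a'\in A$, both of which lie on $P$ and hence in $V(H_1)\setminus V(H_2)$.

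The crux is then to short-circuit each such excursion. Because of the standing assumption in Section~\ref{sec:treewidth} that $G$ is the chordal completion of its tree decomposition, the bag $B_x$---and hence $A\subseteq B_x$---is a clique in $G$. Therefore $\{a,a'\}$ is an edge of $G$ and, since both endpoints lie in $V(G_x)\cap(V(H_1)\setminus V(H_2))$, it is also an edge of the induced subgraph on this set inside $G_x$. Replacing every excursion of $P$ through $V(G_{-x})$ by the direct edge $\{a,a'\}$ yields a walk from $u$ to $v$ that stays inside $V(G_x)\cap(V(H_1)\setminus V(H_2))$, which gives the desired connectivity. I expect this short-circuit step to be the only delicate point: without chordality there is no reason for $a$ and $a'$ to be adjacent, so a detour of $P$ through $V(G_{-x})$ could be genuinely necessary, and the chordal-completion reduction made at the start of Section~\ref{sec:treewidth} is doing real work here.
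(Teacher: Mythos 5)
Your proof is correct and follows essentially the same approach as the paper: both rely on the chordal completion making the adhesion set $A=A_{xp}$ a clique in $G$, so that any excursion of a path through $G_{-x}$ can be short-circuited by the direct edge between its two entry/exit vertices in $A$. The one small omission is that you do not verify that each $H|_x$ is itself connected (required since a non-piercing family consists of connected subgraphs), but this follows from the identical short-circuit argument applied to $V(H)\cap V(G_x)$.
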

\begin{proof}
Since $G$ is chordal, each bag induces a complete subgraph of $G$. Hence, each
$H_x\in\mathcal{H}_x$ is connected.
If $x$ is not a root of $T$, let $y$ be the parent of $x$.
Then, for $H_x, H'_x\in\mathcal{H}_x$, the subgraph
$H_x\setminus H'_x$ of $G_x$ is connected since $H\setminus H'$ is connected
and $A_{xy}$ is a separator in $G$.
Therefore, $(G_x,\HH_x)$ is a non-piercing graph system.
Finally, that $(T_x,\mathcal{B}_x)$ is a 
tree decomposition, is straightforward.
\end{proof}

\subsection{Primal Support}
\label{sec:primal}
In this section, we show that a bounded treewidth non-piercing
system $(G,\mathcal{H})$ with $c:V(G)\to\{\R,\B\}$ admits
a primal support $Q$ s.t. $\tw(Q)\le 2^{\tw(G)+2}+\tw(G)$.
The proof is algorithmic and yields a polynomial time algorithm if $\tw(G)$ is bounded, i.e., an FPT-algorithm parameterized by
$\tw(G)$.

An \emph{easy tree decomposition} of $G$
is a tree decomposition s.t. 
for each adhesion set $A$ of the tree decomposition and
each subgraph $H$ intersecting $A$, it does so at a blue vertex, i.e.,
for any adhesion set $A$, if $H\cap A\neq\emptyset$, then $H\cap\B(A)\neq\emptyset$.
If $G$ has an easy tree decomposition of treewidth $t$, then it is straightforward to construct a primal support $Q$
s.t. $\tw(Q)\le t$. Moreover, in this case, 
we only require the subgraphs
in $\mathcal{H}$ to be connected.

\begin{lemma}
\label{lem:easytd}
Let $(G,\mathcal{H})$ be a graph system with $c:V(G)\to\{\R,\B\}$
s.t. each $H\in\mathcal{H}$ is a connected subgraph
of $G$.
If there is an easy tree decomposition $(T,\mathcal{B})$ of
width $t$, then there is a primal support $Q$ 
on $\B(V)$ of treewidth at most $t$.
\end{lemma}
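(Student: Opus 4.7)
The plan is to construct $Q$ directly from the tree decomposition $(T,\mathcal{B})$ itself. For each node $x\in V(T)$ let $\B(B_x):=B_x\cap\B(V)$, and define $Q$ to be the graph on $\B(V)$ in which $\B(B_x)$ forms a clique for every $x$. The natural candidate decomposition $(T,\{\B(B_x)\}_{x\in V(T)})$ then has width at most $t$ since $|\B(B_x)|\le|B_x|\le t+1$. Checking that this is actually a tree decomposition of $Q$ is routine: every edge of $Q$ was added inside some $\B(B_x)$, and for each fixed $v\in\B(V)$ the nodes whose bag contains $v$ form the same subtree of $T$ as in the original decomposition $(T,\mathcal{B})$.

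The substance of the proof is showing that $Q[\B(V(H))]$ is connected for every $H\in\mathcal{H}$. I would fix two blue vertices $u,v\in\B(V(H))$ and let $T_H=\{x\in V(T):B_x\cap V(H)\ne\emptyset\}$. Since $H$ is connected and each vertex of $H$ occupies a subtree of $T$, the set $T_H$ is itself a subtree of $T$. Picking $x_0,x_k\in T_H$ with $u\in B_{x_0}$ and $v\in B_{x_k}$, I would then walk along the unique path $x_0,x_1,\ldots,x_k$ in $T_H$ between them.

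The key observation is that for each consecutive pair $x_i,x_{i+1}$ on this path, the adhesion $A_{x_ix_{i+1}}$ must intersect $V(H)$: both $B_{x_i}$ and $B_{x_{i+1}}$ meet $V(H)$, $A_{x_ix_{i+1}}$ is a separator in $G$, and if the adhesion missed $V(H)$ then $H$ would split into two disconnected pieces, contradicting connectedness. This is precisely where the easy-decomposition hypothesis kicks in: each such adhesion, meeting $V(H)$, must also contain some blue vertex $w_{i+1}$ of $H$. Then $u,w_1,w_2,\ldots,w_k,v$ is a sequence of vertices in $\B(V(H))$ in which consecutive elements share the bag $B_{x_i}$ (since $w_i\in A_{x_{i-1}x_i}\subseteq B_{x_i}$ and $w_{i+1}\in A_{x_ix_{i+1}}\subseteq B_{x_i}$), hence form edges of $Q$. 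This yields a $u$-$v$ path inside $Q[\B(V(H))]$.

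The main obstacle, were the ``easy'' hypothesis not in force, would be that an adhesion separating two $V(H)$-containing bags could meet $V(H)$ only in red vertices, leaving no blue witness to stitch the path together; the easy hypothesis is precisely what rules this out and lets the concatenation above stay inside $\B(V(H))$.
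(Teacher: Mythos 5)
Your proof is correct and takes essentially the same approach as the paper: both build $Q$ by restricting each bag to its blue vertices, take those restricted bags as cliques, and derive connectivity of $Q[\B(V(H))]$ from the fact that every adhesion set on the path in $T_H$ meets $V(H)$ and, by the easy-decomposition hypothesis, meets it in a blue vertex. You spell out the adhesion-stitching argument more explicitly than the paper does, but the underlying idea is identical.
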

\begin{proof}
Let $(T,\mathcal{B}')$ be the tree decomposition on $\B(V)$
derived from $(T,\mathcal{B})$, 
where $B'=B\cap\B(B)$ is the bag in $\mathcal{B}'$ corresponding to
the bag $B\in\mathcal{B}$. 
For each $B'\in\mathcal{B}'$,
the vertices of $B'$ induce a clique since we work with the
chordal completion of $G$.
Then, $(T,\mathcal{B}')$ is the
tree decomposition of a graph $Q$.  

To show that $Q$ is a support, consider an $H\in\mathcal{H}$.
Since $H$ is a connected subgraph in $G$, 
the vertices in $V(H)$ lie in bags corresponding to 
a connected sub-tree of $T$.
Further, as $(T,\mathcal{B})$ is an 
easy tree decomposition, $\B(V(H))$ lie in bags
of $\mathcal{B}'$ corresponding to a connected sub-tree of $T$.
Since each $B'\in\mathcal{B}'$ induces a complete subgraph
of $Q$, it implies $V(H)$ induces a connected subgraph of $Q$.
\end{proof}

In the proof below, we use the following
notation:
Let $\rho$ be the parent of $x$ in the tree decomposition $(T,\BB)$.
For an adhesion set $A_{x\rho}$ and $S\subseteq A_{x\rho}$, 
for notational convenience,
we assume that each $H\in\mathcal{H}^=_S$ is s.t. 
$H|_x\cap\B(G_x)\neq\emptyset$ and $H|_{-x}\cap\B(G_{-x})\neq\emptyset$.
Let
$\mathcal{M}_S\subseteq\mathcal{H}^=_S$ denote
the set of subgraphs $H$ such that $H_x$ is minimal in $G_x$ in the containment order $\preceq$, i.e., for $H,H'\in\HH^=_S$, 
$H|_x\preceq H'|_x \Leftrightarrow H|_x\subseteq H'|_x$.
We use $(G_x,\preceq)$ to denote this containment order.
We use minimal elements in $\mathcal{M}_S$ to construct an easy tree decomposition at the cost of an increase in the width of the tree decomposition.

\begin{lemma}
\label{lem:makeeasy}
Let $(G,\mathcal{H})$ be a non-piercing graph system with 
$c:V(G)\to\{\R,\B\}$. A tree decomposition $(T,\mathcal{B})$ of 
$G$ of width $t$ can be transformed into an easy tree decomposition
$(T,\mathcal{B}')$ of width at most $2^{t+2} + t$.
\end{lemma}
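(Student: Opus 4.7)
The plan is to augment the bags of the given tree decomposition with a bounded set of blue vertices so that every adhesion set intersects each relevant subgraph in a blue vertex. The augmentation is carried out one edge of $T$ at a time.

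Fix an edge $e=\{x,\rho\}$ of $T$ with adhesion $A=A_{x\rho}$. Following the notational convention introduced just before the lemma, I only need to handle subgraphs $H\in\HH^=_S$ that have blue vertices on both sides of $A$, since any $H$ whose blue vertices all lie on one side has its blue-bag subtree already contained in one half of $T$ and hence needs no augmentation for it. For each $S\subseteq A$ with $S\cap\B(V)=\emptyset$, the goal is to add at most two blue vertices to $A$ (i.e., insert them into both $B_x$ and $B_\rho$) so that every $H\in\HH^=_S$ satisfying the above restriction contains at least one of the added blue vertices.

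The structural heart of the proof is the claim that two blue vertices suffice per red trace $S$. I would establish this by analyzing the poset $(\HH^=_S,\preceq)$ via Lemma~\ref{lem:npsep}: within $\HH^=_S$, if $H|_x$ and $H'|_x$ properly intersect then $H|_{-x}=H'|_{-x}$, while strict containment on the $G_x$ side forces reverse containment on the $G_{-x}$ side. Consequently, the minimal elements $\mathcal{M}_S$ on $G_x$ fall into ``types'' by $H|_x$; across different types, all minimals share a common restriction $R^*$ on $G_{-x}$, so any blue vertex $b_1\in R^*\cap\B(V)$ (which exists by the both-sides-have-blue restriction) hits every minimal of a different type. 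A symmetric argument on the $G_{-x}$ side yields a second blue vertex $b_2$ covering the residual subgraphs, in particular those sharing $H|_x$ but differing on $H|_{-x}$; every non-minimal $H$ contains some minimal $H^{\min}\in\mathcal{M}_S$, and the restrictions forced by Lemma~\ref{lem:npsep} ensure it inherits at least one of $b_1,b_2$.

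To conclude, $|A|\le t+1$ implies at most $2^{t+1}$ red subsets $S$; at most two blue vertices per $S$ contribute at most $2^{t+2}$ new vertices to each of $B_x$ and $B_\rho$. I would charge each addition to the child bag of its edge, so that each bag receives extra vertices only from its unique parent edge; each added blue vertex $v$ has $T_v$ extended only along that single edge, which preserves the subtree property of the decomposition. The resulting bag size is at most $(t+1)+2^{t+2}$, yielding width $2^{t+2}+t$ as claimed. The principal obstacle is the 2-blue-vertex structural claim; the delicate sub-case is when $\mathcal{M}_S$ collapses to a single type on $G_x$ but varies on $G_{-x}$, where the symmetric non-piercing structure on $G_{-x}$ must be invoked to extract a cover of size two.
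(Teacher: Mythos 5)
Your high-level strategy (augment bags with blue vertices, one adhesion set at a time) is reasonable, but the structural claim at the heart of your argument is incorrect, and the accounting does not give the stated bound. First, the covering argument. You choose $b_1$ from the common $G_{-x}$-restriction $R^*$ shared by the $\preceq$-minimal elements of $\mathcal{M}_S$, and then claim non-minimal $H$ ``inherit'' $b_1$. This is backwards: by part~$(i)$ of Lemma~\ref{lem:npsep}, if $H^{\min}|_x \subset H|_x$ then $H|_{-x} \subseteq H^{\min}|_{-x}$, i.e.\ containment on the $G_x$ side forces \emph{reverse} containment on the $G_{-x}$ side, so $b_1 \in H^{\min}|_{-x}$ can easily fall outside $H|_{-x}$. (The workable version of the ``two blue vertices per $S$'' idea is different: fix one minimal $M$, take $b_1 \in \B(M|_x)$ and $b_2 \in \B(M|_{-x})$; then every $H \succeq M$ gets $b_1$ via forward containment on $G_x$, and every $H$ incomparable to $M$ gets $b_2$ via part~$(ii)$ of Lemma~\ref{lem:npsep}. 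Your choice of the side for $b_1$ is exactly what breaks the argument.) Second, even the corrected two-per-$S$ scheme does not give width $2^{t+2}+t$: with $2^{t+1}$ subsets and two vertices per subset per incident edge, an interior bag receives additions from up to three incident edges, and moreover $b_2$ lives on the $G_{-x}$ side, so it is not already present in $B'_x$ and cannot simply be inserted into $A_{x\rho}$ without further justification that the subtree condition of the tree decomposition is preserved.

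The paper's proof takes a fundamentally different route that sidesteps both problems. It works by induction on the height of $T$, first making $T_x$ and $T_y$ easy recursively, and then for each $S \subseteq \R(A_{x\rho})$ it adds just \emph{one} blue vertex, chosen from a minimal $H \in \mathcal{M}_S$ and, crucially, already lying in $B'_x$ (so $B'_x$ never grows and the subtree property is preserved for free); this gives at most $2^{t+1}$ additions per child and $2^{t+2}$ total in $B_\rho$. Correctness is then not a per-adhesion covering argument at all: if some $H$ avoided every added blue vertex in $B'_\rho$, it would have to cross both $A_{x\rho}$ and $A_{y\rho}$, be $\preceq$-incomparable with both chosen minimals $H', H''$, and by Lemma~\ref{lem:npsep}$(ii)$ agree with $H'$ on $G_{-x}$ and with $H''$ on $G_{-y}$; this forces $H' \setminus H''$ to split into a piece in $G_x$ and a piece in $G_y$ separated by $B_\rho$, contradicting non-piercing. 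In short, the paper never needs to cover all of $\HH^=_S$ at a single adhesion set; it leverages the interaction between the two children to rule out an evading subgraph. Your proposal tries to force a local, per-edge cover, which requires more blue vertices per $S$, lands on the wrong side for $b_1$, and overshoots the bag-size bound.
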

\begin{proof}
If $(T,\mathcal{B})$ is an easy tree decomposition, we are done. 
Otherwise, we modify $(T,\mathcal{B})$ to an easy tree decomposition
$(T,\mathcal{B}')$ 
by adding additional blue vertices to the bags in $\mathcal{B}$.
We assume without loss of generality that $(T,\mathcal{B})$ is a
binary tree rooted at a node $\rho$.

We prove by induction on the height of $T$. If $T$ has height 0,
then $T$ consists of a single node $\rho$ 
and hence for each $H\in\mathcal{H}$, $H\cap\B(B_\rho)\neq\emptyset$.
Otherwise, let $x$ and $y$ be children
of $\rho$. By Lemma~\ref{lem:sepnp}, the graph system
$(G_x,\mathcal{H}_x)$ is non-piercing
and $(T_x,\mathcal{B}_x)$ is a tree decompositions of $G_x$. By the
inductive hypothesis, there is an easy tree decomposition
$(T_x,\mathcal{B}'_x)$ of
width at most $2^{t+2}+t$. 
Analogously, there is an easy tree decomposition $(T_y,\mathcal{B}'_y)$
of width at most $2^{t+2}+t$ for $(G_y,\mathcal{H}_y)$.

For each $S\subseteq \R(A_{x\rho})$, consider an $H\in\mathcal{M}_S$. Since $H\cap B'_x\neq\emptyset$ and
$(T,\mathcal{B}'_x)$ is an easy tree decomposition, $H\cap \B(B'_x)\neq\emptyset$. Choose a $b\in H\cap\B(B'_x)$ and add it
to $B_\rho$. The tree decomposition remains a valid tree decomposition
as the bags containing $b$ correspond to a connected
subset of nodes of $T$.
Similarly, for each $S\subseteq \R(A_{y\rho})$,
choose an $H'\in \mathcal{M}_S$ and a vertex
$b'\in H'\cap\B(B'_y)$ and add $b'$ to $B_\rho$. 
Let $B'_\rho$ denote the
bag at $\rho$ after all subsets of $A_{x\rho}$ and $A_{y\rho}$
have been processed. 
Since $|A_{x\rho}|,|A_{y\rho}|\le t+1$ and $|B_{\rho}|\le t+1$, we have $|B'_\rho|\le 2\cdot 2^{t+1}+(t+1)$.
Therefore, width of the tree decomposition $(T,\BB')$ is at most $2^{t+2}+t$.

We claim that $(T,\mathcal{B}')$ is an easy tree decomposition. 
Suppose not. Let $H\in\mathcal{H}$ s.t. $H\cap B_\rho\ne\emptyset$ and 
$H\cap \B(B'_\rho)=\emptyset$.
Then, $H\cap A_{x\rho}\neq\emptyset\neq H\cap A_{y\rho}$ by assumption.
Let $H\cap A_{x\rho}=S$ and $H\cap A_{y\rho}=S'$.
Let $H'$ be the minimal
subgraph in $\mathcal{M}_S$ whose blue vertex was added to $B'_\rho$
and let $H''$ be the minimal subgraph in $\mathcal{M}_{S'}$ whose
blue vertex was added to $B'_\rho$. 
Then, $H$ and $H'$ are incomparable in $(G_x,\preceq)$.
By part $(ii)$ of Lemma~\ref{lem:npsep}, this implies $H$ and $H'$ are identical in
$G_{-x}$. 
Similarly,
$H$ and $H''$ are incomparable in $(G_y,\preceq)$ and by 
Lemma~\ref{lem:npsep} $H$ and $H''$ are identical in
$G_{-y}$. 
It follows that $H'|_x$ and $H''|_x$ are incomparable in $G_x$, and $H'|_y$ and $H''|_y$ are incomparable in $G_y$.
Also, note that $H''\cap A_{x\rho}=S=H'\cap A_{x\rho}$.
But this implies $H'\setminus H''$ has at least two 
connected components - one in $G_x$ and one in $G_y$ contradicting
the assumption that the subgraphs in $\HH$ are non-piercing.
Hence, $(T,\BB')$ is an easy tree decomposition.
\end{proof}

\primalsupporttw*
\begin{proof}
Let $(T,\mathcal{B})$ be a tree decomposition of $G$ of width $t$ where $T$ is a binary rooted tree.
If $(T,\mathcal{B})$ is an easy tree decomposition, we are done by Lemma~\ref{lem:easytd}. Otherwise,
we transform it into an easy tree decomposition of width at most $2^{t+2} + t$
by Lemma~\ref{lem:makeeasy}. We then obtain a primal support $Q$ of the same treewidth by
Lemma~\ref{lem:easytd}.
\end{proof}

\subsection{Dual Support}
\label{sec:dual}
We now construct a dual support.
For a non-piercing system $(G,\HH)$ of treewidth $t$, we construct a dual support of treewidth $O(2^{4t})$.
By Lemma~\ref{lem:contfree}, we assume that there are \emph{no containments}, i.e., there are no two subgraphs $H, H'\in\mathcal{H}$ such that  $H\subseteq H'$. 
As in the primal setting, we first show how we can obtain a 
dual support for a simple case. A support for the general
case is obtained by reducing it to the simple case.
In the following, since we construct a graph on $\mathcal{H}$, we abuse notation
and also use $H$ to denote the vertex in the dual support corresponding to $H\in\mathcal{H}$.

For a graph system $(G,\mathcal{H})$, a tree decomposition
$(T,\mathcal{B})$ of $G$ is said to be \emph{k-sparse} with
respect to $\mathcal{H}$ if for each bag
$B\in \mathcal{B}$, at most $k$ subgraphs in $\HH$ intersect $B$.
If $(G,\mathcal{H})$ admits such a tree decomposition, 
we say that $(G,\mathcal{H})$ is $k$-sparse with respect to $\HH$. 
Note that we do not restrict $\mathcal{H}$ to be non-piercing for $(G,\HH)$ to be $k$-sparse.

For a connected graph system $(G,\mathcal{H})$
with a tree decomposition $(T,\mathcal{B})$ of width $t$ that is $k$-sparse 
with respect to $\mathcal{H}$,
a dual support can be computed in time 
$O(k^2 t|V(G)||\mathcal{H}|)$.
The algorithm, {\bf $k$-SDS} that achieves this is as follows:
We construct a tree decomposition $(T,\BB')$ of a graph $Q^*$.
For each bag $B\in\mathcal{B}$, we put a vertex $H$ in the corresponding bag $B'\in\BB'$ if $H\cap B\ne\emptyset$, 
and then put a complete graph on these vertices.
$(T,\mathcal{B}')$ is the desired
tree decomposition of $Q^*$ on $\mathcal{H}$. 
Since each $H\in\mathcal{H}$
is connected and each bag in $\BB'$ induces a clique, it is clear that $(T,\mathcal{B}')$ is a valid tree decomposition. 
Further, since $(T,\mathcal{B})$ is $k$-sparse, it implies that
$(T,\mathcal{B}')$ has treewidth at most $k$.

\begin{lemma}
\label{lem:easydual}
For a connected graph system $(G,\mathcal{H})$ and a tree decomposition
$(T,\mathcal{B})$ of width $t$ that is $k$-sparse with respect to $\mathcal{H}$,
Algorithm {\bf $k$-SDS} computes a dual support
$Q^*$ with $\tw(Q^*)\le k$.
\end{lemma}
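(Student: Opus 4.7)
The plan is to verify three things in turn: (a) that $(T,\BB')$ is in fact a valid tree decomposition of the graph $Q^*$ produced by the algorithm, (b) that $Q^*$ is a dual support, i.e. that for every $v\in V(G)$ the set $\{H\in\HH:v\in V(H)\}$ induces a connected subgraph of $Q^*$, and (c) that the width bound holds. The width bound is immediate from $k$-sparsity: each bag $B\in\BB$ is intersected by at most $k$ subgraphs of $\HH$, so the corresponding bag $B'\in\BB'$ contains at most $k$ vertices of $Q^*$, hence $\tw(Q^*)\le k-1\le k$.

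For (a) I need to check both tree decomposition axioms for $(T,\BB')$. The edge axiom is automatic from the construction: every edge of $Q^*$ is introduced as part of the clique placed on some bag $B'_x$, so both of its endpoints lie together in that bag. The vertex axiom says that for each $H\in\HH$, the node set $T(H):=\{x\in V(T):H\in B'_x\}=\{x\in V(T):V(H)\cap B_x\ne\emptyset\}$ induces a subtree of $T$. Here I would invoke the standard lemma that, for a \emph{connected} subgraph $H$ of $G$, the collection of bags of $(T,\BB)$ meeting $V(H)$ spans a connected subtree. The argument is: write $T(H)=\bigcup_{v\in V(H)}T(v)$ where each $T(v)$ is a subtree of $T$ by the vertex axiom of $(T,\BB)$; for any edge $\{u,v\}\in E(H)$ some bag contains both $u$ and $v$, so $T(u)$ and $T(v)$ share a node and their union is a subtree; connectivity of $H$ then lets me chain these unions along any $u$--$v$ walk in $H$, giving connectivity of $T(H)$. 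This is where the hypothesis that every $H\in\HH$ is connected is used, and it is the only nontrivial step of the proof.

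For (b) I fix $v\in V(G)$ and choose any node $x$ with $v\in B_x$. Every $H\in\HH$ containing $v$ then satisfies $V(H)\cap B_x\supseteq\{v\}\ne\emptyset$, so $H\in B'_x$ by construction. Since the algorithm places a clique on the vertices of $B'_x$, the set $\{H\in\HH:v\in V(H)\}$ induces a clique in $Q^*$, which is trivially connected.

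The proof is essentially bookkeeping, and no serious obstacle is anticipated; the one moving part is the standard fact about connected subgraphs meeting a connected family of bags, which I would state and prove inline if a more self-contained argument is desired.
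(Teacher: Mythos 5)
Your proof is correct and follows essentially the same route as the paper: observe that $k$-sparsity bounds bag sizes, verify that connectedness of each $H$ makes $(T,\BB')$ a valid tree decomposition (the paper asserts this as ``clear'' where you spell out the subtree-union argument), and show that $\HH_v$ sits inside a single clique-bag so is connected in $Q^*$. The only cosmetic difference is that you note the width is actually $k-1$, which is a marginally sharper bound than the $k$ stated.
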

\begin{proof}
For a vertex $v\in V$, consider $\mathcal{H}_v$, the set of all subgraphs containing $v$. 
Since $(T,\mathcal{B})$ is a tree decomposition, at each node $x\in V(T)$
s.t. $v\in B_x$, $\mathcal{H}_v\subseteq B'_x$, where $B'_x$ is the bag at node $x$ in $\mathcal{B}'$,
i.e., corresponding to $B_x$. Since the vertices in $B'_x$ are pairwise adjacent, it implies
$\mathcal{H}_v$ induces a connected subgraph of $Q^*$. Hence,
$Q^*$ is a dual support.
\end{proof}

For the general setting, we obtain a dual support by \emph{sparsifying} the input graph system
so that it is $k$-sparse for some $k$, and such that a support on the sparsified graph system yields a 
support for the original graph system.
The sparsification yields a set
$\mathcal{H}'$ of subgraphs that satisfy the following properties:
$(i)$ they are in bijective correspondence with $\mathcal{H}$,
$(ii)$ each $H'\in\mathcal{H}'$ corresponding to $H\in\mathcal{H}$ is s.t.
$H'\subseteq H$. $(iii)$ If $H'\subset H$, then there is an $H''\in\mathcal{H}$
that \emph{pushed out} $H$.
In this case, $H'=H\setminus H''$ and there is an edge $e=\{u,v\}\in E(G)$
s.t. $u\in H'$ and $v\in H''$. We call $e$ a
\emph{connecting edge} between $H'$ and $H''$.

$\mathcal{H}'$ may contain multiple subgraphs spanning the same set of
vertices. We denote by $\unique(\mathcal{H}')$, a subset of $\mathcal{H}'$ consisting of one representative from each set of identical subgraphs.

\begin{lemma}
\label{lem:mvi}
Let $(G,\mathcal{H})$ be a non-piercing graph system, 
and $(T,\mathcal{B})$ a tree decomposition of $G$ of width $t$.
Then, there is a connected graph system $(G,\mathcal{H}')$ s.t.
$(T,\mathcal{B})$ is a $2^{4(t+1)}$-sparse with respect to $\unique(\mathcal{H}')$. If
$Q'$ is a dual support for $(G,\unique(\mathcal{H}'))$, there
is a dual support $Q^*$ for $(G,\mathcal{H}')$ s.t. $\tw(Q^*)=\tw(Q')$
and $Q^*$ is also a dual support for $(G,\mathcal{H})$.
\end{lemma}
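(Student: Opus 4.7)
The plan is to define $\mathcal{H}'$ by a bottom-up sparsification on the tree decomposition $(T,\mathcal{B})$, where at each bag we \emph{shrink} certain subgraphs by subtracting others that have the same intersection pattern with a neighborhood of the bag, so that many resulting shrunken subgraphs become identical and can be identified in $\unique(\mathcal{H}')$.

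\textbf{Construction.} Assume $T$ is binary and rooted, and process its nodes in post-order. Initialize $\mathcal{H}' := \mathcal{H}$. At each node $x$, group the $H \in \mathcal{H}'$ meeting $B_x$ by their \emph{local signature}, defined as the tuple of intersections of $H$ with $B_x$ and with the (at most three) neighboring bags. Within a signature class the subgraphs agree on $B_x$, so by Lemma~\ref{lem:npsep} applied to the separator $B_x$, any two elements of the class are either comparable on one side of $B_x$ or rigidly identical on the other side. Pick a locally minimal element $H''$ of the class, and for every other element $H$ in the class replace $H$ by $H' := H \setminus H''$. Non-piercing forces this difference to be connected, and the shared intersection with $B_x$ produces an edge of $G$ between $H'$ and $H''$, the \emph{connecting edge}. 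The new $\mathcal{H}'$ remains in bijection with $\mathcal{H}$, establishing properties $(i)$--$(iii)$ stated before the lemma.

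\textbf{Sparsity bound and lifting.} After processing all bags in post-order, I would argue that at every $B_x$, any two distinct elements of $\unique(\mathcal{H}')$ meeting $B_x$ must have different local signatures, since otherwise a further shrinking would have made them identical. As each of the four coordinates of a signature is a subset of a bag of size at most $t+1$, the number of distinct subgraphs meeting $B_x$ is at most $(2^{t+1})^4 = 2^{4(t+1)}$. Given a dual support $Q'$ for $(G, \unique(\mathcal{H}'))$, form $Q^*$ on $\mathcal{H}'$ by attaching each duplicate copy to its representative as a pendant vertex; this preserves treewidth, so $\tw(Q^*) = \tw(Q')$. For any $v \in V(G)$, the images in $Q^*$ of $\mathcal{H}'_v$ are either representatives (connected through $Q'$) or pendants hanging off one of these representatives, giving a connected subgraph. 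Finally, $Q^*$ is also a support for $(G,\mathcal{H})$: if $v \in H$ but $v \notin H'$, then $v \in H''$ where $H''$ is the subgraph that pushed out $H$, so $H'' \in \mathcal{H}'_v$, and the connecting edge together with the support property for $\mathcal{H}'$ propagates connectivity along the sparsification chain, so the bijected image of $\mathcal{H}_v$ is connected in $Q^*$.

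\textbf{Main obstacle.} The delicate point will be showing that the sparsification is globally consistent: a shrinking chosen at $B_x$ must not break the signature invariants required at a neighboring bag $B_y$, and the connecting-edge chain must remain intact even when $H''$ is itself shrunk at a later bag. This is exactly where the non-piercing structure encoded in Lemma~\ref{lem:npsep} is essential, since it controls how the pieces on the two sides of an adhesion set interact. Tightening the bound to $2^{4(t+1)}$ rather than a weaker power of two will require verifying that intersections with $B_x$ together with its three neighboring bags truly certify identity once the algorithm terminates, which I expect to follow by a careful case analysis combining parts $(i)$--$(iii)$ of Lemma~\ref{lem:npsep} along each tree edge incident to $x$.
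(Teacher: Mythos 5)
Your high-level plan coincides with the paper's: shrink $\mathcal{H}$ bottom-up to make $(T,\mathcal{B})$ $2^{4(t+1)}$-sparse with respect to $\unique(\mathcal{H}')$, apply Lemma~\ref{lem:easydual}, and lift the dual support back to $\mathcal{H}$ via connecting edges. However, your shrinking step differs from the paper's in a way that leaves a genuine gap, and it is precisely the gap you flag as the ``main obstacle.'' The paper pushes at \emph{edges} of $T$ in post-order, groups only by the intersection with the adhesion set $A_{xz}$, and replaces $H'$ by $H'|_x \setminus H|_x$ --- a set restricted to $G_x$. This restriction is the key device: the new set avoids $A_{xz}$, hence (by the contiguity property of tree decompositions) avoids every ancestor adhesion set, so each subgraph is pushed at most once. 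Moreover, the paper shows via Lemma~\ref{lem:npsep} that $H'|_x \setminus H|_x = H' \setminus H$, i.e.\ the shrunken set is a difference of two \emph{original} members of $\mathcal{H}$, so non-piercing immediately gives connectivity. Your version processes \emph{nodes}, groups by a 4-tuple signature, and replaces by the unrestricted full difference $H \setminus H''$. Without the $G_x$ restriction, the modified subgraph can still meet ancestor adhesion sets and become eligible for further shrinking; but at that point the operand is no longer in $\mathcal{H}$, so you can no longer invoke non-piercing (or Lemma~\ref{lem:npsep}) to certify that the next difference is connected, nor to trace the connecting-edge chain. You acknowledge this but do not close it; the paper's post-order-on-edges plus $G_x$-restriction is exactly what makes the chain collapse to a single step.

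A secondary point on the counting: your claim that ``any two distinct elements of $\unique(\mathcal{H}')$ meeting $B_x$ must have different local signatures, since otherwise a further shrinking would have made them identical'' is stronger than what the paper's pushing achieves or needs. After the paper's pushing, subgraphs with the same intersection with $A_{xz}$ have identical \emph{restrictions to $G_x$}, but may still differ in $G_{-x}$; the bound $2^{4(t+1)}$ follows from Eqn.~(\ref{eqn:uniq1}) combined with the observation that a subgraph meeting $B_x$ is determined by its intersections with $A_{x\rho}$, the two child adhesion sets, and $B_x$ itself. If you want your node-level, full-difference variant to work, you would need to prove separately (a) that each subgraph is modified at most once or that iterated differences of non-piercing subgraphs remain connected, and (b) that the minimal element at a node is chosen with respect to the restriction to the subtree, which is implicit in your use of Lemma~\ref{lem:npsep} but not stated; as written the argument does not establish either.
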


\begin{proof}
Let $(T,\mathcal{B})$ be a tree decomposition of width $t$. We assume wlog
that $T$ is a binary tree rooted at $\rho$.
To obtain $\mathcal{H}'$, we
do a post-order edge traversal of $T$ and at each
edge, we do a \emph{pushing}. 

The pushing operation is as follows:
Let $z$ be the parent of a node $x$ in $T$.
Having done the pushing on the edges in $T_x$, we do the
following at $\{x,z\}$: 
For each $\emptyset\neq S\subseteq A_{xz}$, choose an $H\in\mathcal{M}_S$.
$H$ is called the \emph{pusher} for $S$ at $A_{xz}$. 
For each $H'\in\mathcal{H}^=_S$ s.t. $H'|_x\setminus H|_x\neq\emptyset$,
replace $H'$ by $H'|_x\setminus H|_x$ in $\mathcal{H}$. 
We say that $H'$ has been \emph{pushed}
by $H$ at $A_{xz}$. 

Observe that once a subgraph $H'$ is pushed by a pusher $H$
at $A_{xz}$, $H'|_x\setminus H|_x\subseteq G_x$, and since pushing is
done in a post-order edge traversal of $T$, 
$H'$ is pushed at most once.
Further, the subgraphs in
$\HH$ intersecting $A_{xz}$ were non-piercing before pushing and
since $H'|_x\setminus H|_x\neq\emptyset$, by part $(i)$ and $(ii)$ of Lemma~\ref{lem:npsep},
$H'|_{-x}\subseteq H|_{-x}$. Hence,
$H'|_x\setminus H|_x=H'\setminus H$. This implies
$H'|_x\setminus H|_x$ is a connected subgraph of $G$.
It follows that there
is a connecting edge $\{u,v\}\in E(G)$ s.t.
$u\in H'|_x\setminus H|_x$ and $v\in H|_x$ as
$H'|_x\setminus H|_x\neq\emptyset$ and $H'\cap H\neq\emptyset$.

Let $\mathcal{H}'$ be the subgraphs at the end of the algorithm.
Note that by construction $\mathcal{H}'$ is in bijective correspondence
with $\mathcal{H}$.
For $S\subseteq A_{xz}$,
if a subgraph $H'\in\HH^{=}_S$ was not pushed by a pusher $H$ at $A_{xz}$, then
$H'|_x=H|_x$. 
Therefore, the number of distinct subgraphs of $G_x$ in the collection $\mathcal{H}'$ intersecting $A_{xz}$ is less than
$2^{t+1}$. That is, 
\begin{align}
\label{eqn:uniq1}
|\unique(\mathcal{H}'_{A_{xz}}|_x)|<2^{t+1}
\end{align}

The subgraphs in $\unique(\HH')$ intersecting the bag $B_x$ can be 
associated with 4-tuples based on its intersection with $A_{xz}$, with the
adhesion sets between $x$ and its children, or with the bag $B_x$ itself.
From Eqn. (\ref{eqn:uniq1}), it follows that there are at most $2^{4(t+1)}$ distinct subgraphs in $\unique(\HH')$ intersecting $B_x$.

By the arguments above, $(G,\unique(\mathcal{H}'))$ is a connected
graph system that is at most $2^{4(t+1)}$-sparse, and hence by 
Lemma~\ref{lem:easydual}, it has a dual support $Q'$ of treewidth at most
$2^{4(t+1)}$. By Lemma~\ref{lem:contfree},
we can extend $Q'$ to a support $Q^*$ for $(G,\mathcal{H}')$ without
increasing the treewidth.

We now argue that $Q^*$ is a dual support for
$(G,\mathcal{H})$. Consider a vertex $v\in V(G)$. 
The algorithm above ensures that each subgraph is pushed at most once.
Suppose $H'\in\mathcal{H}_v$ was pushed by $H$ so that its modified copy $H''=H'\setminus H$
does not cover $v$. Then, $H\in\mathcal{H}_v$.
Let $e =\{a,b\}$ be the connecting edge between $H$ and $H''$ such that $a\in H$ and $b\in H''$.
Since $(T,\mathcal{B})$
is a valid tree decomposition, there is a bag $B$ containing both $a$ and $b$.
Let $H_1$ be the unique representative of $H''$ in $\unique(\HH')$.
Since Algorithm {\bf $k$-SDS} puts a complete graph on the subgraphs intersecting
$B$, it implies $H$ and $H_1$ are adjacent in $Q'$.
In $Q^*$, we made $H'$ adjacent to $H_1$.
Hence, $Q^*$ is a dual support for $(G,\HH)$.
\end{proof}

\dualsupporttw*
\begin{proof}
Let $(T,\mathcal{B})$ be a tree decomposition of $G$ of width $t$. 
If $(T,\mathcal{B})$ is $2^{4(t+1)}$-sparse, then we are done.
Otherwise, by Lemma~\ref{lem:mvi}, we obtain a dual support $Q^*$ for $(G,\mathcal{H})$,
of treewidth at most $2^{4(t+1)}$.
\end{proof}

\subsection{Intersection Support}\label{sec:intsupporttw}
In this section, we obtain an intersection support of treewidth $2^{O(2^{\tw(G)})}$ for a non-piercing intersection system $(G,\mathcal{H},\mathcal{K})$.
The construction of the intersection support uses the construction of both the primal and dual support and this leads to the double exponential bound
on the treewidth of the intersection support.

Let $(T,\mathcal{B})$ be a tree decomposition of $G$.
$(T,\mathcal{B})$ is said to be a $\KK$-\emph{easy} tree decomposition if
for each $K\in\mathcal{K}$ and each adhesion set $A$ with $A\cap K\neq\emptyset$, there is an $H\in\mathcal{H}_K$
s.t. $H\cap (K\cap A)\neq\emptyset$.
Similar to the setting for the dual support, we say that $(T,\mathcal{B})$ is $k$-sparse with respect to $\mathcal{H}$
if for each bag $B\in\mathcal{B}$, there are at most $k$ subgraphs in $\HH$ that intersect $B$.

We start by showing that if $(T,\mathcal{B})$ is a $\KK$-easy $k$-sparse tree decomposition, then there is an intersection support of treewidth at most $k$.
The proof follows along the same lines as the proofs of Lemma~\ref{lem:easytd} and Lemma~\ref{lem:easydual}. 

\begin{lemma}\label{lem:easyint}
Let $(G,\mathcal{H,K})$ be a connected intersection system.
If $(T,\mathcal{B})$ is a $\KK$-easy tree decomposition of $G$ s.t. it is $k$-sparse with respect to $\mathcal{H}$, 
then there is an intersection support for $(G,\mathcal{H,K})$ of treewidth at most $k$. 
\end{lemma}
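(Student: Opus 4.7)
The plan is to adapt the construction of Algorithm \textbf{$k$-SDS} from Lemma~\ref{lem:easydual} and argue that the additional $\KK$-easy hypothesis upgrades dual-connectivity to intersection-connectivity. Concretely, I would define the candidate support $\tilde{Q}$ on vertex set $\HH$ by taking the tree decomposition $(T,\BB')$ where $B'_x=\{H\in\HH:H\cap B_x\neq\emptyset\}$, and making each $B'_x$ a clique. The $k$-sparsity of $(T,\BB)$ with respect to $\HH$ gives $|B'_x|\le k$, so $\tw(\tilde Q)\le k$. Validity of $(T,\BB')$ as a tree decomposition of $\tilde Q$ is immediate: for each $H\in\HH$, the set of nodes $x$ with $H\in B'_x$ is exactly the set of nodes $x$ with $B_x\cap H\neq\emptyset$, which forms a subtree of $T$ since $H$ is connected; and every edge of $\tilde Q$ lies inside some $B'_x$ by construction.

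The heart of the proof is showing that for each $K\in\KK$, the set $\HH_K$ induces a connected subgraph of $\tilde Q$. First I would identify the subtree $T_K=\{x\in V(T):B_x\cap K\neq\emptyset\}$, which is indeed a subtree because $K$ is connected. Next I would observe that every $H\in\HH_K$ has some vertex $v\in H\cap K$, and any bag $B_x$ containing $v$ satisfies both $v\in B_x\cap K$ and $v\in B_x\cap H$; hence $x\in T_K$ and $H\in B'_x\cap\HH_K$. Thus $\HH_K=\bigcup_{x\in T_K}(B'_x\cap\HH_K)$, and each piece $B'_x\cap\HH_K$ is a clique in $\tilde Q$.

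It then suffices to chain consecutive cliques along edges of $T_K$. For an edge $\{x,y\}$ of $T_K$, I claim $A_{xy}\cap K\neq\emptyset$: if some vertex of $B_x\cap K$ actually lies in $A_{xy}$ we are done, and otherwise any $u\in B_x\cap K\setminus A_{xy}$ lies in $V(G_x)\setminus A_{xy}$ while any $v\in B_y\cap K\setminus A_{xy}$ lies in $V(G_{-x})$, so any $u$-$v$ path inside the connected subgraph $K$ must cross the separator $A_{xy}$. Now invoke the $\KK$-easy property at the adhesion set $A_{xy}$: there exists $H\in\HH_K$ with $H\cap(K\cap A_{xy})\neq\emptyset$. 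This $H$ lies in both $B'_x\cap\HH_K$ and $B'_y\cap\HH_K$, linking the two cliques. Walking along any path in $T_K$ and chaining overlaps, $\HH_K$ is connected in $\tilde Q$.

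The routine step is the tree-decomposition and $k$-sparsity bookkeeping (already carried out in Lemma~\ref{lem:easydual}). The main obstacle, and the only place the new hypothesis enters, is the adhesion-intersects-$K$ step combined with the $\KK$-easy guarantee: without $\KK$-easiness one would have the subtree $T_K$ and a family of cliques indexed by it, but no witness edge guaranteeing that consecutive cliques share a vertex, so the connectivity argument would collapse. Once that witness is produced at every edge of $T_K$, the remainder is mechanical.
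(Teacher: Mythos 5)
Your proof is correct and follows essentially the same strategy as the paper: build the same clique-on-each-bag graph, and use the $\KK$-easy condition at adhesion sets to glue the cliques indexed by $T_K$ together. The one variation is that you place $H$ in $B'_x$ whenever $H$ meets $B_x$, while the paper's phrasing restricts $B'_x$ to those $H\in\HH_K$ for some $K$ meeting $B_x$; your version is cleaner (it manifestly yields a valid tree decomposition, since the bags containing $H$ are exactly the bags of $T$ that $H$ meets), and it still obeys the width bound from $k$-sparsity. Your spelled-out argument that $K\cap A_{xy}\neq\emptyset$ for every edge $\{x,y\}$ of $T_K$, followed by invoking $\KK$-easiness at that adhesion set to find a shared $H\in\HH_K$ in both bags, is precisely the content the paper compresses into one sentence.
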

\begin{proof}
We construct a tree decomposition $(T,\mathcal{B}')$ of a graph $\tilde{Q}$.
For each bag $B\in\mathcal{B}$ and each subgraph $K\in\KK$ intersecting $B$, we put a vertex in bag $B'\in\BB'$ for each $H\in\mathcal{H}_K$ s.t. $H$ intersects $B$, where $B'$ is the bag in $(T,\mathcal{B}')$ corresponding to the bag $B$ in $(T,\BB)$ . Since $(T,\mathcal{B})$ is $k$-sparse,
there are at most $k$ vertices of $\HH$ in $B'$. 
We put a complete graph on the vertices of $\mathcal{H}$ in $B'$ and obtain a tree decomposition $(T,\mathcal{B}')$ of a graph $\tilde{Q}$ on $\mathcal{H}$. Since each $H\in\mathcal{H}$ is connected, 
the vertex in $\tilde{Q}$ corresponding to $H$, lies in a connected set of bags of $(T,\mathcal{B}')$.
Further, by construction,
each edge between a pair of vertices $H, H'$ of $\tilde{Q}$ lies in some bag of $\mathcal{B}'$. Hence, $(T,\mathcal{B}')$ is a valid
tree decomposition of $\tilde{Q}$.

We now show that $\tilde{Q}$ is an intersection support.
Consider any $K\in\KK$.
Since $K$ is a connected subgraph of $G$, the vertices of $K$ lie in a sub-tree of $T$. 
Since $(T,\mathcal{B})$ is $\mathcal{K}$-easy, for each adhesion set intersected by $K$, there is a subgraph $H\in\mathcal{H}_K$
intersecting that adhesion set. This implies that $\mathcal{H}_K$ induces a connected subgraph of $\tilde{Q}$, and
hence $\tilde{Q}$ is an intersection support for $(G,\mathcal{H},\mathcal{K})$.
\end{proof}

We now show how we can modify any non-piercing system $(G,\mathcal{H,K})$ to one satisfying the conditions of Lemma~\ref{lem:easyint}.

\intsupporttw*
\begin{proof}
Let $(T,\mathcal{B})$ be a tree decomposition of $G$ of 
width $t$. Suppose $(T,\mathcal{B})$ is not $\mathcal{K}$-easy.
We define $\phi:V(G)\to\{\R,\B\}$.
For each $v\in V(G)$,  
if $\mathcal{K}_v\neq\emptyset$ and $\mathcal{H}_v\neq\emptyset$,
we set $\phi(v)=\B$. Otherwise, set $\phi(v)=\R$.
Under this coloring, since $(T,\mathcal{B})$ is not $\mathcal{K}$-easy,
there is a subgraph $K\in\mathcal{K}$ and an adhesion set $A$ s.t.
$K\cap A\subseteq\R(A)$. 
Since the subgraphs in $\mathcal{K}$ are non-piercing, 
by Lemma~\ref{lem:makeeasy}, we obtain a
tree decomposition $(T,\mathcal{B}')$ of treewidth $t'=2^{t+2}+t$
that is easy with respect to $\mathcal{K}$.
By the choice of coloring, it implies
$(T,\mathcal{B}')$ is $\mathcal{K}$-easy.

Each $K\in\mathcal{K}$ induces a connected subgraph of $G$.
Hence, $K$ intersects a connected set of bags of 
$(T,\mathcal{B'})$. Since $(T,\mathcal{B}')$ is $\mathcal{K}$-easy,
for any pair of vertices $u,v\in K$, there is a path in $T$
between a bag containing $u$ and a bag containing $v$ s.t.
each adhesion set on this path is intersected by a subgraph
$H\in\mathcal{H}_K$. 

Recall that to obtain dual support for $(G,\HH)$, the Algorithm {\bf $k$-SDS} adds a complete graph on the subgraphs
in $\mathcal{H}$ intersecting each bag of $(T,\mathcal{B}')$.
Therefore, a dual support for $(G,\HH)$ thus obtained, is also an intersection support for $(G,\HH,\KK)$.
However, this support may not have bounded treewidth. 
To obtain a support of small treewidth, we need to first sparisfy the subgraphs in $\mathcal{H}$.

Since the subgraphs in $\mathcal{H}$ are non-piercing, by Lemma~\ref{lem:mvi}, we can obtain a collection $\mathcal{H}'$
s.t. $(T,\mathcal{B}')$ is $2^{4(t'+1)}$-sparse with respect to 
$\unique(\mathcal{H}')$.
Note that $(T,\mathcal{B}')$ remains $\KK$-easy w.r.t. the intersection system $(G,\unique(\HH'),\KK)$.
Now, by Algorithm {\bf $k$-SDS} we
obtain a tree decomposition of a graph $Q^*$ that by Lemma~\ref{lem:easydual} is a dual support for $(G,\unique(\HH'))$ and $\tw(Q^*)\le 2^{4(t'+1)}=2^{{2^{t+4}}+4(t+1)}$.
Since $Q^*$ is obtained by {\bf $k$-SDS}, it also an intersection support 
for $(G,\unique(\mathcal{H}'),\mathcal{K})$.
By Theorem \ref{thm:dualsupporttw}, we can extend $Q^*$ to a dual support $\tilde{Q}$ for $(G,\HH)$ such that $\tw(\tilde{Q})=\tw(Q^*)$.
Then $\tilde{Q}$ is also an intersection support for $(G,\HH,\KK)$ since the tree decomposition $(T,\BB')$ is $\KK$-easy.
\end{proof}

\subsection{Implementation}
\label{sec:implementation}

In this section, we show that our algorithms for the construction of supports of bounded treewidth run in polynomial time if the treewidth of the host graph is bounded.
We argue by showing that if $\HH$ is a collection of non-piercing subgraphs of a bounded treewidth graph $G$, then the \emph{VC-dimension} of the corresponding set system defined by $(V(G),\HH)$ is bounded, which together with Sauer-Shelah lemma (Chapter 10 in \cite{Mat02LecDiscGeom}) implies that  $|\HH|$ is bounded by $\poly(n)$ where $n=|V(G)|$.

\begin{theorem}
\label{thm:vc-dimtw}
Let $(G,\HH,\KK)$ be a non-piercing intersection system of treewidth $t$.
Then $|\HH|,|\KK|=O(n^{3t+3})$.
\end{theorem}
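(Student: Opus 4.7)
The plan is to bound $|\HH|$ and $|\KK|$ via a VC-dimension argument combined with the Sauer-Shelah lemma. First, per the Note in the Preliminaries, each $H \in \HH$ may be taken to be the induced subgraph of $G$ on $V(H)$; in particular distinct elements of $\HH$ have distinct vertex sets, so $|\HH|$ equals the number of distinct sets in $\{V(H) : H \in \HH\} \subseteq 2^{V(G)}$. If the VC-dimension of the set system $(V(G), \{V(H) : H \in \HH\})$ is at most $d$, Sauer-Shelah gives $|\HH| \le \sum_{i \le d} \binom{n}{i} = O(n^d)$. My target is therefore to establish $d \le 3(t+1) = 3t+3$, and the identical argument applied to $\KK$ handles the second bound.

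The core step is the VC-dimension bound. I would fix a tree decomposition $(T, \mathcal{B})$ of $G$ of width $t$, and show that every $H \in \HH$ is uniquely determined by its intersection with at most three distinguished bags of $\mathcal{B}$, thereby being described by at most $3(t+1)$ vertices of $V(G)$. If some set $S \subseteq V(G)$ with $|S| > 3(t+1)$ were shattered, then since only $O(n^{3(t+1)})$ different descriptors are available while $2^{|S|}$ distinct shattering witnesses are required, two distinct $H_T, H_{T'}$ would collide on the descriptor and hence agree as induced subgraphs, contradicting shattering.

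The technical work is picking the three descriptor bags canonically for every $H$. The natural candidates, guided by the minimality-based synchronization in the proofs of Lemma~\ref{lem:makeeasy} and Lemma~\ref{lem:mvi}, are the root of the connected subtree $T_H \subseteq T$ formed by bags intersecting $H$, together with at most two further bags capturing how $T_H$ branches inside $T$. The verification that these at most $3(t+1)$ vertex-slots determine $H$ on every remaining bag rests on Lemma~\ref{lem:npsep}: if two subgraphs $H, H' \in \HH$ agreed on the descriptor yet differed on some other bag, then either $H \setminus H'$ or $H' \setminus H$ would be disconnected across an adhesion set, violating non-piercing. Making this reconstruction argument uniform across all possible shapes of $T_H$ is the main hurdle; once completed, Sauer-Shelah closes the proof and yields the stated $O(n^{3t+3})$ bound for both $|\HH|$ and $|\KK|$.
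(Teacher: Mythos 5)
Your high-level plan — bound the VC-dimension of $(V(G),\HH)$ by a function of $t$ and then apply Sauer–Shelah — matches the paper. But the core step you propose is different from the paper's, it is left unproved, and the logic connecting it to a VC-dimension bound does not hold.

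First, the logical issue. You propose to show every $H\in\HH$ is determined by its intersection with three bags, hence $|\HH|=O(n^{3(t+1)})$, and then to conclude that no set of size $>3(t+1)$ is shattered ``since only $O(n^{3(t+1)})$ descriptors are available while $2^{|S|}$ witnesses are required.'' This inference is incorrect: having $N$ sets in the family only bounds the VC-dimension by roughly $\log_2 N$, which here would be $\Theta(t\log n)$, not $3t+3$. For $|S|=3t+4$ and large $n$ there is no contradiction between $2^{|S|}$ and $n^{3(t+1)}$. Notice also that if your descriptor claim were true, you would already have $|\HH|=O(n^{3(t+1)})$ directly and would not need Sauer–Shelah at all; invoking both is a sign the argument is circling rather than closing.

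Second, the descriptor claim itself — that every $H$ is determined by its trace on three canonically chosen bags — is both unproved (you explicitly call it ``the main hurdle'') and far from obvious. The subtree $T_H$ of bags meeting $H$ can branch at many nodes, not just two, and nothing you write pins down how ``two further bags capturing branching'' would suffice or how Lemma~\ref{lem:npsep} would let you reconstruct $H$ on every remaining bag from that data. Without this step, nothing is established.

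The paper's proof takes a more direct route that avoids any reconstruction. Assuming a set $S$ of size $3t+4$ is shattered, one finds an edge $e=\{x,y\}$ of $T$ so that both $T_x$ and $T\setminus T_x$ contain disjoint subsets $S_1,S_2\subseteq S$ of size at least $t+2$. By shattering, each of the $k=\binom{t+2}{\lfloor(t+2)/2\rfloor}$ half-size subsets of $S_1$ (resp.\ $S_2$) is realized by some hyperedge, giving two antichains $H_1,\dots,H_k$ and $H'_1,\dots,H'_k$; by shattering again there are $J_1,\dots,J_k\in\HH$ with $J_i\cap S=H_i\cup H'_i$. Each $J_i$ is connected across the separator, so it meets the adhesion set $A_e$, and Lemma~\ref{lem:npsep}(ii) forces the traces $J_i\cap A_e$ to be pairwise incomparable. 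But $|A_e|\le t+1$, so it supports at most $\binom{t+1}{\lfloor(t+1)/2\rfloor}<k$ pairwise incomparable subsets — a contradiction. This gives VC-dimension $\le 3t+3$ cleanly, and Sauer–Shelah finishes. To repair your proposal you would either need to actually prove the three-bag determination claim (and then drop the VC-dimension framing in favor of counting descriptors directly), or abandon it in favor of a shattering contradiction of the kind above.
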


\begin{proof}
We show that $|\HH|=O(n^{3t+3})$.
The bound on the size of $\KK$ follows analogously.

First, we will show that the VC-dimension of the set system defined by $(V(G),\HH)$ is at most $3t+3$.
Let $(T,\BB)$ be a tree decomposition of $G$ of width $t$.
We will show that no set of size more than $3t+3$ can be shattered.
Suppose not. Let $S$ be a set with $3t+4$ vertices that can be shattered.
There is an edge $\{x,y\}$ in $T$ such that the union of bags in $T_x$ contains a subset $S_1\subset S$ of at least $t+2$ vertices, and the union of bags in $T\setminus T_x$ contains a subset $S_2\subset S$ of at least $t+2$ vertices where $S_1\cap S_2=\emptyset$.
Note that such subsets should exist since each bag contains at most $t+1$ vertices and $|S|=3t+4$.

There are at least $k=\binom{t+2}{\lfloor\frac{t+2}{2}\rfloor}$ subsets of $S_i$ of size $\lfloor \frac{t+2}{2}\rfloor$ for $i\in\{1,2\}$.
Let $H_1,H_2,\ldots H_k$ be the distinct subgraphs in $\HH$ defined by these $k$ sets in $S_1$, and $H'_1,H'_2,\ldots H'_k$ be the distinct subgraphs in $\HH$ defined by these $k$ sets in $S_2$.
Note that $H_i$'s are pairwise incomparable in the containment order. Similarly, $H'_i$~'s are pairwise incomparable.

Next, we define a set of subgraphs in $\HH$ that intersect with both $S_1$ and $S_2$.
Such a set of subgraphs must exist since $S$ is shattered by element of $\HH$. 
Let $J_i\in\HH$ be a subgraph such that $J_i\cap S=H_i\cup H'_i$ for $i=1,2,\ldots k$.
Then, each $J_i$ contains exactly $2\lfloor\frac{t+2}{2}\rfloor$ vertices of $S$ and hence, they are pairwise incomparable in $S$.
Also, each $J_i$ intersects the adhesion set $A_e$ as they are connected subgraphs.
Since the subgraphs in $\HH$ are non-piercing, by part $(ii)$ of Lemma \ref{lem:npsep}, $J_i$ and $J_\ell$ should be incomparable in $A_e$ for $i\ne \ell$.
But there are only $\binom{t+1}{\lfloor\frac{t+1}{2}\rfloor}<k$ subsets of $A_e$ that are pairwise incomparable.
This contradicts the fact that the set $S$ can be shattered.
Hence, the VC-dimension of $(V(G),\HH)$ is at most $3t+3$.
By Sauer-Shelah lemma (see \cite{Mat02LecDiscGeom}, Chap. 10)
therefore, $|\HH|\le O(n^{3t+3})$. 
\end{proof}

Now, we are ready to show the running time of our algorithms.

\begin{theorem}\label{thm:runningtime-tw}
Let $G$ be an $n$-vertex graph of treewidth $t$ and $\HH,\KK$ be non-piercing subgraphs of $G$.
Then, a primal and a dual support of treewidth $O(2^t)$ and $O(2^{4t})$ respectively can be computed in time $\poly(n^t)$.
And, an intersection support of treewidth at most $2^{O(2^t)}$ can be computed in time $\poly(n^{2^t})$.
\end{theorem}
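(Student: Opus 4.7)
The plan is to combine the polynomial bound on $|\HH|$ and $|\KK|$ from Theorem~\ref{thm:vc-dimtw} with a direct traversal-cost analysis of the three construction algorithms (Lemmas~\ref{lem:easytd}, \ref{lem:makeeasy}, \ref{lem:easydual}, \ref{lem:mvi}, and \ref{lem:easyint}), charging the work at each node of the tree decomposition. Throughout, we may assume that a tree decomposition $(T,\mathcal{B})$ of $G$ of width $t$ is given (or computed in FPT time $f(t)\cdot n$), that $T$ is a binary rooted tree with $O(n)$ nodes, and that $G$ has been replaced by its chordal completion with respect to $(T,\mathcal{B})$. By Theorem~\ref{thm:vc-dimtw}, both $|\HH|$ and $|\KK|$ are $O(n^{3t+3})$, so any bookkeeping over a single subgraph costs $\poly(n^t)$.

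For the primal support, I would walk through Lemma~\ref{lem:makeeasy} in post-order on $(T,\mathcal{B})$. At each edge $\{x,\rho\}$, one enumerates the $2^{t+1}$ subsets $S\subseteq A_{x\rho}$ and, for each non-empty $S$, identifies a minimal element of $\mathcal{M}_S$ in the poset $(G_x,\preceq)$; this reduces to scanning the intersections $H\cap G_x$ for each $H\in\mathcal{H}^=_S$, which takes $O(|\HH|\cdot n)$ time per subset. Summing over the $2^{t+1}$ subsets and the $O(n)$ edges of $T$ yields $O(n^2\cdot 2^{t+1}\cdot|\HH|)=\poly(n^t)$. Constructing the derived tree decomposition $(T,\mathcal{B}')$ for $Q$ in Lemma~\ref{lem:easytd} is then linear in its size, so the overall primal bound is $\poly(n^t)$.

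For the dual support, the sparsification in Lemma~\ref{lem:mvi} similarly performs one pushing operation per subset of each adhesion set in post-order: at edge $\{x,z\}$, for each $S\subseteq A_{xz}$, a pusher $H\in\mathcal{M}_S$ is chosen and each $H'\in\mathcal{H}^=_S$ is replaced by $H'|_x\setminus H|_x$, which again costs $\poly(n,|\HH|)$ per subset. The resulting family has $|\unique(\mathcal{H}')|\le|\mathcal{H}|$, and Algorithm $k$-SDS runs in time $O(k^2 t|V(G)||\mathcal{H}'|)$ with $k=2^{4(t+1)}$, giving an overall bound of $\poly(n^t)\cdot 2^{O(t)}=\poly(n^t)$.

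For the intersection support, I would follow the structure of the proof of Theorem~\ref{thm:intsupporttw}: first apply Lemma~\ref{lem:makeeasy} to $(G,\mathcal{K})$ under the auxiliary coloring $\phi$ to obtain a $\mathcal{K}$-easy decomposition $(T,\mathcal{B}')$ of width $t'=2^{t+2}+t$; then apply Lemma~\ref{lem:mvi} to $\mathcal{H}$ relative to $(T,\mathcal{B}')$, producing a decomposition that is $2^{4(t'+1)}$-sparse; then invoke Algorithm $k$-SDS and Lemmas~\ref{lem:easyint},~\ref{lem:contfree}. The key bookkeeping is that each adhesion set in $(T,\mathcal{B}')$ now has size $t'+1=2^{O(t)}$, so each traversal step enumerates $2^{2^{O(t)}}$ subsets and executes $\poly(n,|\HH|,|\KK|)$ work per subset. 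The main obstacle (and the reason the bound degrades to $\poly(n^{2^t})$) is bounding the per-node cost after the $\mathcal{K}$-easification inflates the bag size from $t+1$ to $2^{O(t)}$: once one accepts that the subset enumeration at each adhesion set is the dominant cost, the total running time telescopes to $n\cdot 2^{2^{O(t)}}\cdot\poly(n^{3t+3})=\poly(n^{2^t})$, matching the claimed bound.
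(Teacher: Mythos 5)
Your proposal is correct and follows essentially the same route as the paper's own proof: invoke Theorem~\ref{thm:vc-dimtw} to bound $|\HH|$ and $|\KK|$ by $\poly(n^t)$, charge a post-order traversal of the tree decomposition with $2^{t+1}$ (or $2^{2^{O(t)}}$, after $\mathcal{K}$-easification) subset enumerations per adhesion set, and bound the per-subset work polynomially in $n$ and $|\HH|$. The paper cites an explicit $2^{O(t)}n$-time algorithm for computing the initial tree decomposition and gives the per-adhesion-set pushing cost as $O(|\HH|^2)$, but these are presentation details rather than a different argument.
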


\begin{proof}
By Theorem \ref{thm:primalsupporttw}, \ref{thm:dualsupporttw} and \ref{thm:intsupporttw}, there are appropriate supports of the claimed treewidth. We show below the running time of the algorithms.

Since $\tw(G)=t$, a tree decomposition $(T,\BB)$ of width $t$ can be computed in time $2^{O(t)}n$ where $T$ is a binary tree and has $O(n)$ number of nodes~\cite{bodlaender1998partial,bodlaender2016,BODLAENDER1996358}.

For the construction of a primal support, we require an easy tree decomposition of $G$.
We did a post-order traversal of the edges in $T$ 
and chose a minimal subgraph in $\mathcal{M}_S$ for each non-empty subset $S$ of an adhesion set $A$ in $T$.
Since $A$ has at most $t+1$ vertices, there are at most $2^{t+1}$ choices for $S$.
Also, by Theorem \ref{thm:vc-dimtw}, $|\HH|=n^{3t+3}$.
Therefore, a primal support of treewidth $2^{t+2}+t$ can be computed in time $\poly(n^t)$.

For a dual support, we first construct a graph system that is $2^{4(t+1)}$-sparse.
For such a construction, we again do a post-order traversal of the edges in $T$ and choose a subset $S$ of an adhesion set $A$ to select a pusher in $\mathcal{M}_S$.
In $O(|\HH|^2)$ time, we can do the pushing operation at a subset $S$ to get new subgraphs.
Therefore all the adhesion sets can be processed in time $O(2^tn|\HH|^2)$.
Once a $2^{4(t+1)}$-sparse tree decomposition is obtained, the algorithm {\bf $k$-SDS} puts a complete graph on the subgraphs in each bag and hence can be done in $O(k^2t)$ time where $k=2^{4(t+1)}$.
Therefore, a dual support of treewidth $2^{4(t+1)}$ can also be computed in time $\poly(n^t)$ since $|\HH|\le n^{3t+3}$ by Theorem \ref{thm:vc-dimtw}.

Finally, the construction of an intersection support follows the construction of primal and dual supports.
By the arguments above, it follows that a $\KK$-easy tree decomposition can be computed in time $\poly(n^t)$ where the width of the resulting tree decomposition is $O(2^t)$.
Then a $2^{O(2^t)}$-sparse tree decomposition can be computed in time $\poly(n^{2^t})$.
Hence, the total running time to compute an intersection support is $\poly(n^{2^t})$.
\end{proof}

\section{Lower Bounds}\label{sec:lowerbounds}
In this section, we show that there exist non-piercing graph systems for which the treewidth of any primal or dual support is exponential in the treewidth of the host graph.

\primallb*
\begin{proof}
Let $n = \lfloor m/2\rfloor$ and let $N = \binom{n}{\lfloor n/2 \rfloor}$.
We construct an $N\times N$ grid $B$ of isolated vertices $b_{i,j}$, $i=1,\ldots, N$ and $j=1,\ldots, N$.
This constitutes the set of vertices colored $\B$.
Let $R=\{r_1,\ldots, r_n\}$ and $C=\{c_1,\ldots, c_n\}$ be two sets of $n$ isolated vertices each.
The vertices in $R\cup C$ are colored $\R$.
Let $\mathcal{R}=\{R_1,\ldots, R_{N}\}$ be the collection of $\binom{n}{\lfloor n/2 \rfloor}$ subsets of $R$ of size $\lfloor n/2 \rfloor$.
Similarly, let $\mathcal{C}=\{C_1,\ldots, C_{N}\}$ be the $\binom{n}{\lfloor n/2 \rfloor}$ subsets of $C$ of size $\lfloor n/2 \rfloor$.
Let the host graph $G$ be the complete bipartite graph with bipartition $(R\cup C)$ and $B$.

For $i=1,\ldots, N$ and $j=1,\ldots, N-1$, let $H_{i;j,j+1}$ be the subgraph induced on the vertices $\{b_{i,j}, b_{i,j+1}\}\cup R_i\cup C_{j+1}$ for $i=1,\ldots, N$ and $j=1,\ldots, N-1$.
Similarly, for each $i=1,\ldots, N-1$ and $j=1,\ldots, N$, let $H_{i,i+1;j}$ be the subgraph induced on the vertices $\{b_{i,j}, b_{i+1,j}\}\cup R_{i+1}\cup C_j$.
See Figure \ref{fig:primaltw1}.
Let $\mathcal{H}=\{H_{i;j,j+1}:i=1,\ldots, N, j=1,\ldots, N-1\}\cup\{H_{i,i+1;j}:i=1,\ldots, N-1, j=1,\ldots, N\}$.

By construction, each subgraph in $\mathcal{H}$ contains exactly two vertices colored $\B$ corresponding to vertices in the grid that are
consecutive row-wise or column-wise, and one subset from each of the collections $\{R_1,\ldots, R_N\}$ and $\{C_1,\ldots, C_N\}$.
Since $G$ is a complete bipartite graph, for any $H\in\mathcal{H}$, 
the two blue vertices in $H$ are adjacent to all red vertices in $H$. Hence, $H$ is a connected subgraph of $G$.

Consider two distinct subgraphs $H,H'\in\mathcal{H}$. Since $H$ and $H'$ differ in either their row index or column index, 
$H\setminus H'$ and $H'\setminus H$ each contains at least one blue vertex.
Suppose $H$ and $H'$ differ in their row index.
By construction, $H$ and $H'$ are adjacent to distinct subsets of size $\lfloor n/2\rfloor$ in $R$. Hence, 
both $(H\setminus H')\cap R$ and $(H'\setminus H)\cap R$ are non-empty. 
Similarly, if $H$ and $H'$ differ in their column index, then $(H\setminus H')\cap C$ and $(H'\setminus H)\cap C$
are non-empty. Since $H\setminus H'$ contains at least one blue vertex of $H$ and 
$G$ is a complete bipartite graph,
each red vertex in $H\setminus H'$ is connected to the blue vertices in $H\setminus H'$. On the other hand, if both
blue vertices of $H$ are present in $H\setminus H'$, then they are adjacent via a red vertex in $H\setminus H'$ 
since $H\setminus H'\cap R$ or $H\setminus H'\cap C$ is non-empty and $G$ is a complete bipartite graph. 
Thus, $H\setminus H'$ is connected. A symmetric argument implies $H'\setminus H$ is
connected and hence $\mathcal{H}$ is a set of non-piercing subgraphs of $G$.
Since $G$ is a complete bipartite graph, each blue vertex in $H\setminus H'$ is adjacent to each red vertex in $H\setminus H'$.
Thus, $H\setminus H'$ is connected.
Similarly, $H'\setminus H$ is connected and hence, $(G,\HH)$ is a non-piercing graph system.

Since $G$ is a bipartite graph with $2n$ red vertices, $G$ has treewidth at most $2n\le m$. There is a subgraph in $\mathcal{H}$
corresponding to each pair of blue vertices that are consecutive along a row of the grid or along a column of the grid.
Therefore, any primal support contains a grid of size $N\times N$ as a subgraph.
Since $N=\binom{n}{\lfloor n/2 \rfloor}\ge \frac{2^n}{\sqrt{2n}}$, it follows that the treewidth of any primal support is at least $\frac{2^n}{\sqrt{2n}} \ge \frac{2^{\lfloor m/2\rfloor}}{\sqrt{m}}$.  
\end{proof}

\begin{figure}[ht!]
\centering
\begin{subfigure}{0.47\textwidth}
\begin{center}
\includegraphics[scale=.65]{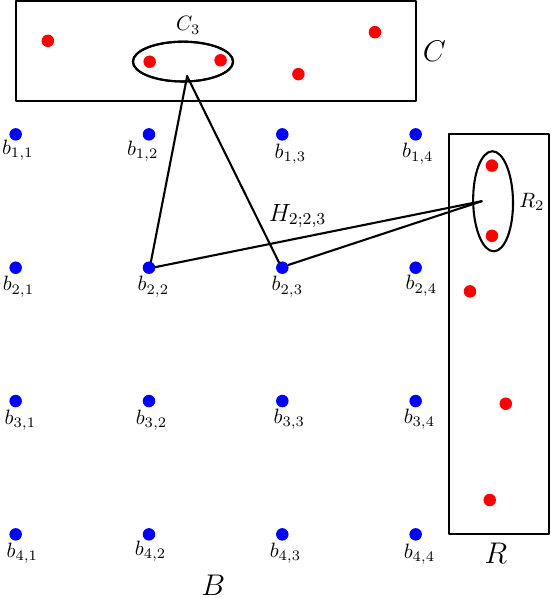}
\caption{Construction of primal hypergraph}
\label{fig:primaltw1}
\end{center}
\end{subfigure}
\hfill
\begin{subfigure}{0.45\textwidth}
\begin{center}
\includegraphics[scale=.65]{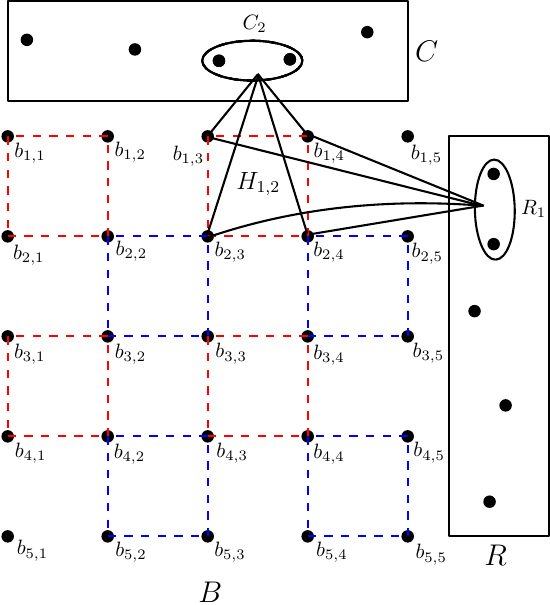}
\caption{Construction of dual hypergraph}
\label{fig:dualtw1}
\end{center}
 \end{subfigure}
 \caption[Lower bounds on treewidth of primal and dual supports]{Construction of exp. lower bounds on treewidth for primal and dual supports. (a) Subgraph $H_{2;2,3}$ consists of vertices $\{b_{2,2},b_{2,3}\}\cup R_2\cup C_3$. (b) Subgraph $H_{1,2}$ consists of vertices $\{b_{1,3},b_{2,3},b_{1,4},b_{2,4}\}\cup R_1\cup C_2$.}
 \label{fig:lowerbounds}
 \end{figure}

\duallb*
\begin{proof}
The construction is similar to that of the primal lower bound. 
Let $n=\lfloor m/2\rfloor$ and let $N = \binom{n}{\lfloor n/2 \rfloor}$.
Let $B$ be a $(2N+1)\times (2N+1)$ grid of isolated vertices $b_{i,j}$ for $i,j\in\{1,\ldots 2N+1\}$. Let $R$ and $C$ be two
sets of $n$ isolated vertices in each.
The host graph is the complete bipartite graph $G$ with bipartition 
$(R\cup C)$ and $B$.
Let $\mathcal{R}=\{R_1,\ldots, R_N\}$ denote the $\binom{n}{\lfloor n/2 \rfloor}$ subsets of $R$ of size $\lfloor n/2 \rfloor$.
$\mathcal{C}=\{C_1,\ldots, C_N\}$ is defined analogously.

We define the subgraphs $H_{i,j}$ so that each subgraph contains 4 vertices of $B$ in a checkerboard pattern.
More formally, the subgraphs $H_{i,j}$ are defined as follows.
Let $i\in\{1,\ldots, 2N\}$.
If $i$ is odd, then for $j=1,\ldots, N$, $H_{i,j}$ is the induced graph on 
$b_{i,2j-1}, b_{i+1,2j-1}, b_{i,2j}, b_{i+1,2j}$ and the subset
$R_i$ of $R$  and $C_j$ of $C$.
If $i$ is even, then for $j=2,\ldots, N$,
$H_{i,j}$ is the induced subgraph of $G$ on the vertices
$b_{i,2j}, b_{i+1,2j}, b_{i,2j+1}, b_{i+1,2j+1}$ and the subsets $R_i$ and $C_j$.
See Figure \ref{fig:dualtw1}. For odd (even) $i$, the vertices of $H_{i,j}$ in $B$ are shown by red (blue) squares.
Let $\mathcal{H}$ denote the set of subgraphs thus constructed. 

Each subgraph $H_{i,j}\in\HH$ is induced on four vertices of $B$ and a subset $R_i$ of $R$ and a subset $C_j$ of $C$.
Since $G$ is a complete bipartite graph, it follows that each subgraph $H\in\mathcal{H}$ induces a connected
subgraph of $G$. Consider any two subgraphs $H, H'\in\mathcal{H}$. Since they differ in at least one of the row
or column indices, it follows that $H\setminus H'$ and $H'\setminus H$ each contain at least three 
vertices of $B$. Further, if $H$ and $H'$ differ in their row index, then they contain distinct
subsets in $\mathcal{R}$. Similarly, if $H$ and $H'$ differ in their column index, they contain distinct subsets in $\mathcal{C}$. Since no two subsets in $\mathcal{R}$ are contained in one another,
it follows that $(H\setminus H')\cap R$ and $(H'\setminus H)\cap R$ are non-empty if $H$ and $H'$ differ
in their row index. Similarly, $(H\setminus H')\cap C$ and $(H'\setminus H)\cap C$ are non-empty if
$H$ and $H'$ differ in their column index. Since $G$ is a complete bipartite graph and $H\setminus H'$
contain vertices of $B$ and vertices of either $R\cup C$, it follows that $H\setminus H'$ is
connected. Similarly, $H'\setminus H$ is connected. Hence, $\mathcal{H}$ is a collection of non-piercing
subgraphs of $G$.

Since $|R\cup C|=2n$, it follows that $\tw(G)\le 2n\le m$. On the
other hand, for $i=2,\ldots, 2N$ and $j=2,\ldots, 2N$, the vertices $b_{i,j}$ are contained in exactly two subgraphs in $\mathcal{H}$, and thus, they have to be adjacent in any dual support for $(G,\HH)$. This implies that any dual support $Q^*$ must
contain a grid of size $N\times N$ as an induced subgraph and therefore, $\tw(Q^*)\ge N$.
Since $N=\binom{n}{\lfloor n/2\rfloor}$, $\tw(Q^*)\ge \frac{2^{n}}{\sqrt{2n}} \ge \frac{2^{\lfloor m/2\rfloor}}{\sqrt{m}}$.
\end{proof}

\section{Outerplanar Support}\label{sec:outerplanar}
In this section, we give a polynomial time algorithm
that computes an intersection support for an outerplanar non-piercing intersection system.
Outerplanar graphs have treewidth at most 2.
Let $G$ be an outerplanar graph and $\HH,\KK$ be collections of non-piercing subgraphs of $G$.
By Theorem \ref{thm:primalsupporttw} and Theorem \ref{thm:dualsupporttw}, the graph system $(G,\HH)$ admits primal and dual supports of treewidth at most $2^{4}+2$ and $2^{12}$ respectively.
Further, by Theorem \ref{thm:intsupporttw}, $(G,\HH,\KK)$ admits an intersection support of treewidth at most $2^{2^7}$.
However, we show that in this restricted setting, i.e., when $G$ is an outerplanar graph, there is an intersection support that is outerplanar.
This also implies outerplanar primal and dual supports.

Recall from Section~\ref{sec:prelim} that we can assume $G$ is maximal outerplanar. 
Let $C$ be the outer face in an outerplanar embedding of $G$.
Let $\HH,\KK$ be two families of connected subgraphs of $G$.
Observe that two subgraphs of a graph intersect if and only if they share a vertex. Hence, the hypergraph on $\mathcal{H},\mathcal{K}$ induced on $C$ is identical to that induced on $G$.
Consequently, it is sufficient to construct an outerplanar support for the cycle systems that are \emph{strong $axax$-free}, a
notion we define below.

\subsection{Cycle $axax$-free systems}\label{sec:cycleaxax}

For a cycle $C$ embedded in the plane, and collections $\HH$ and $\KK$ of subgraphs (not necessarily connected) of $C$, we call $(C,\HH)$ a \emph{cycle system} and $(C,\HH,\KK)$ a \emph{cycle intersection system}.
We start with the definitions of cycle systems that 
(\emph{strong}) $axax$-free.

\begin{definition}[$axax$-free]
\label{defn:axax}
Let $(C,\mathcal{H})$ be a cycle system.
$H,H'\in\mathcal{H}$ are an $axax$-pair if there are four distinct vertices 
$a_1,x_1,a_2,x_2$ in cyclic order on $C$ such that $a_1,a_2\in H\setminus H'$ and $x_1,x_2\in H'$.
$(C,\mathcal{H})$ is $axax$-free if there are no $axax$-pairs in $\mathcal{H}$.
For two families $\HH,\KK$ of subgraphs of $C$, the intersection system $(C,\mathcal{H},\mathcal{K})$ is \emph{$axax$-free} if both $(C,\mathcal{H})$ and $(C,\mathcal{K})$ are $axax$-free.
\end{definition}

\begin{definition}[Intersection property] The  cycle intersection system $(C,\mathcal{H},\mathcal{K})$ is said to satisfy the \emph{intersection property} if
for any $H\in\mathcal{H}$ and $K\in\mathcal{K}$ with 
four vertices $h_1, k_1, h_2, k_2$ in cyclic order on $C$ such that $h_1, h_2\in H$ and $k_1, k_2\in K$, then $H\cap K\neq\emptyset$.
\end{definition}

\begin{definition}[(Strong) $axax$-free] 
The cycle intersection system $(C,\HH,\KK)$ is said to satisfy the \emph{strong $axax$-free property}, if it is $axax$-free and it satisfies the intersection property.
\end{definition}

\begin{restatable}{lemma}{npaxaxfree}
\label{lem:npaxax-free}
Let $(G,\mathcal{H},\mathcal{K})$ be an embedded outerplanar non-piercing system with $C$ denoting the outer cycle of $G$.
Then, $(C,\mathcal{H},\mathcal{K})$ satisfies the strong $axax$-free property.    
\end{restatable}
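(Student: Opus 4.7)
The plan is to prove the three conditions comprising the strong $axax$-free property, all via the same topological observation: in a planar embedding, two vertex-disjoint paths whose endpoints alternate on the boundary of a single face cannot coexist. Specifically, I will use the well-known consequence of the Jordan curve theorem stating that if $G'$ is a planar graph with a face $F$ and $v_1, v_2, v_3, v_4$ appear on the boundary of $F$ in cyclic order, then $G'$ contains no two vertex-disjoint paths $P_{13}$ from $v_1$ to $v_3$ and $P_{24}$ from $v_2$ to $v_4$. In our outerplanar setting, this is applied with $F$ being the outer face bounded by $C$, which contains all vertices of $G$ on its boundary.

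For $axax$-freeness of $(C,\mathcal{H})$, I would argue by contradiction. Suppose $H,H'\in\mathcal{H}$ form an $axax$-pair witnessed by $a_1,x_1,a_2,x_2$ in cyclic order on $C$ with $a_1,a_2\in V(H)\setminus V(H')$ and $x_1,x_2\in V(H')$. Since $\mathcal{H}$ is non-piercing, $V(H)\setminus V(H')$ induces a connected subgraph of $G$ containing $a_1$ and $a_2$, yielding a path $P$ in $G$ from $a_1$ to $a_2$ with $V(P)\subseteq V(H)\setminus V(H')$. Since $H'$ is connected and contains $x_1,x_2$, there is a path $Q$ in $G$ from $x_1$ to $x_2$ with $V(Q)\subseteq V(H')$. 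By construction $V(P)\cap V(Q)=\emptyset$, while the four endpoints interleave on $C$, contradicting the planarity fact. Replacing $\mathcal{H}$ by $\mathcal{K}$ and using non-piercing of $\mathcal{K}$ gives $axax$-freeness of $(C,\mathcal{K})$ by the same argument.

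For the intersection property, let $H\in\mathcal{H}$ and $K\in\mathcal{K}$ admit $h_1,k_1,h_2,k_2$ on $C$ in cyclic order with $h_1,h_2\in V(H)$ and $k_1,k_2\in V(K)$, and suppose for contradiction that $V(H)\cap V(K)=\emptyset$. Connectedness of $H$ produces a path $P\subseteq H$ from $h_1$ to $h_2$, and connectedness of $K$ produces a path $Q\subseteq K$ from $k_1$ to $k_2$. The assumption $V(H)\cap V(K)=\emptyset$ guarantees $V(P)\cap V(Q)=\emptyset$, and the endpoints interleave on the outer face, again contradicting planarity. Note that the non-piercing hypothesis is not invoked for this part; only connectedness of the individual subgraphs is required.

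The main obstacle is formulating the planarity lemma carefully enough in the outerplanar setting to justify the contradiction. It is a standard topological fact, and its proof amounts to concatenating $P$ with one of the two arcs of $C$ between its endpoints to form a closed Jordan curve inside the closed disk bounded by $C$; the remaining two endpoints of $Q$ then lie on opposite sides of this curve, forcing $Q$ to meet it at an interior point, which is impossible since $Q$ is a simple path in the planar embedding that is vertex-disjoint from $P$ and avoids the endpoints of $P$. Beyond this topological input and the definitions, no further machinery is needed.
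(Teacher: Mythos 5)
Your proposal is correct and follows essentially the same route as the paper: both take a path $P$ inside $H\setminus H'$ between $a_1$ and $a_2$ (using non-piercing), take a path between $x_1$ and $x_2$ (resp. $k_1,k_2$) in the other subgraph, and derive a contradiction from the Jordan curve/planarity fact that two vertex-disjoint paths in an outerplanar graph cannot have alternating endpoints on the outer cycle. The only cosmetic difference is that you correctly invoke connectedness of $H'$ for the second path, whereas the paper phrases the contradiction in terms of $H'\setminus H$ being connected, which is a slightly weaker (and not quite needed) formulation of the same idea.
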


\begin{proof}
Let $H,H'\in\mathcal{H}$.
Since $\mathcal{H}$ are non-piercing subgraphs of $G$, $H\setminus H'$ and $H'\setminus H$ are both connected.  
Suppose there is a cyclic sequence $a_1, x_1, a_2, x_2$ of vertices on $C$ such that $a_1, a_2\in H\setminus H'$, and $x_1, x_2\in H'$.
Since $H\setminus H'$ is connected, there is a path $P$ between $a_1$ and $a_2$ lying in $H\setminus H'$. Then, $x_1$ and $x_2$ lie in separate components of $G\setminus P$, contradicting the assumption that $H'\setminus H$ is connected.
An identical argument shows that $(C,\mathcal{K})$ is $axax$-free.
See Figure~\ref{fig:axax-free}. 

To see that $(C,\mathcal{H},\mathcal{K})$ satisfies the intersection property, consider $H\in\mathcal{H}$ and
$K\in\mathcal{K}$ with vertices $h_1, k_1, h_2, k_2$ in cyclic order s.t. $h_1, h_2\in H$ and $k_1, k_2\in K$.
Since $H$ is connected in $G$, there is a path $P$ 
between $h_1$ and $h_2$, all of whose vertices lie in $H$. 
If $H\cap K=\emptyset$, then $P$ lies in $H\setminus K$. Since
$h_1$ and $h_2$ are non-consecutive on $C$, $G\setminus P$ gets
separated into two components with $k_1$ and $k_2$ in distinct
components. This implies there is no path between $k_1$ and
$k_2$ in $G$ that lies entirely in $K$, a contradiction since $K$ is connected.
\end{proof}

\begin{figure}[ht!]
    \centering
    \includegraphics[width=0.3\linewidth]{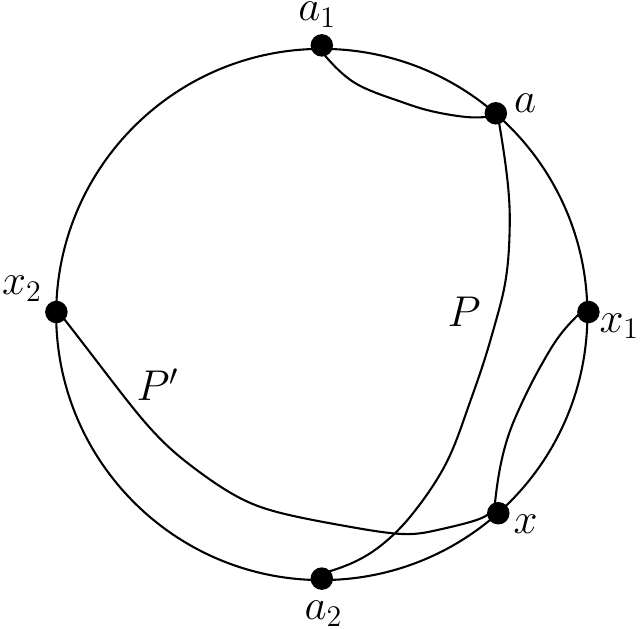}
    \caption{Non-piercing implies $axax$-free.}
    \label{fig:axax-free}
\end{figure}

Our next theorem shows that if $(C,\HH,\KK)$ is strong $axax$-free, then it admits an outerplanar intersection support.
We will use this to prove the existence of an outerplanar support for the non-piercing intersection system $(G,\HH,\KK)$.

\begin{restatable}{theorem}{outintsupport}
\label{thm:outintsupport}
Let $(C,\mathcal{H},\mathcal{K})$ be a strong $axax$-free outerplanar system. Then, there is an outerplanar support $Q$ for $(C,\HH,\KK)$.
\end{restatable}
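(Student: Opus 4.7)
}

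The plan is to construct $Q$ by induction on $|\HH|$, peeling off one subgraph at a time while maintaining an outerplanar embedding in which every vertex lies on the outer face. The $axax$-free condition will be exploited to equip each $H \in \HH$ with a canonical arc on $C$, and the intersection property will be what lets a bounded-degree attachment of a peeled-off vertex carry enough adjacencies to preserve the support property for every $K \in \KK$.

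\textbf{Step 1 (Simplification).} First I would apply Lemma~\ref{lem:contfree} to reduce to the case that $\HH$ is containment-free; this is safe because attaching degree-one vertices to an outerplanar graph preserves outerplanarity. I would fix an arbitrary $c_0 \in V(C)$ as an origin, inducing a linear order on $V(C)$.

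\textbf{Step 2 (Arc structure from $axax$-freeness).} For each $H \in \HH$, I define $\arc(H)$ to be the shortest closed cyclic sub-arc of $C$ containing $V(H)$. The $axax$-free property on $\HH$, rephrased, says that for every ordered pair $(H,H')$ the set $V(H) \setminus V(H')$ is contained in a single sub-arc of $C$ that is disjoint from $V(H')$. Together with containment-freeness, this will imply that any two arcs $\arc(H), \arc(H')$ fall into one of a short list of combinatorial configurations (disjoint, sharing a single endpoint, or one strictly containing the other without meeting at an interior vertex of $H'$). This yields a forest-like hierarchy among the arcs, together with a natural left-to-right cyclic order among siblings.

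\textbf{Step 3 (Peeling).} I would pick an extremal $H^* \in \HH$ whose arc $\arc(H^*)$ is a leaf of this hierarchy, say a shortest arc containing no other arc of $\HH$. Apply the induction hypothesis to $(C, \HH \setminus \{H^*\}, \KK)$ to obtain an outerplanar support $Q'$, embedded so every vertex lies on the outer face. I then insert $H^*$ as a new outer-face vertex attached to at most two consecutive outer-face vertices $H_1^*, H_2^*$ of $Q'$, chosen as the arc-neighbors of $\arc(H^*)$ in the hierarchy (either the enclosing arc and a cyclic sibling, or two cyclic siblings along $C$). Because the attachment is to a contiguous outer-boundary segment, the augmented graph remains outerplanar and every vertex, including $H^*$, remains on the outer face, keeping the invariant for the next induction step.

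\textbf{Step 4 (Connectivity check).} For each $K \in \KK$, the inductive hypothesis yields that $\HH_K \setminus \{H^*\}$ induces a connected subgraph of $Q'$, so it suffices to show: if $H^* \in \HH_K$, then $H_1^* \in \HH_K$ or $H_2^* \in \HH_K$. Since $V(H^*) \cap V(K) \neq \emptyset$, the arc $\arc(K)$ meets $\arc(H^*)$. Using the $axax$-free property on $\KK$ to constrain how $\arc(K)$ can be positioned relative to the arc hierarchy of $\HH$, and then applying the intersection property (the "strong" half of strong $axax$-free) to turn any cyclic crossing of $\arc(K)$ with $\arc(H_1^*)$ or $\arc(H_2^*)$ into an actual shared vertex, I can argue that $K$ must share a vertex with at least one of $H_1^*, H_2^*$.

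\textbf{Main obstacle.} The real difficulty is in Step 3: the two neighbors $H_1^*, H_2^*$ have to be simultaneously (i) consecutive on the outer face of $Q'$, so that outerplanarity is preserved, and (ii) sufficient to catch every $K \in \KK$ that meets $H^*$. The strong $axax$-free assumption is the lever for (ii)--pure $axax$-freeness is not enough, since a $K$ could cyclically wrap around $H^*$ without sharing a vertex with either neighbor. The heart of the proof will be a careful case analysis of the position of $\arc(H^*)$ in the arc hierarchy of $\HH \setminus \{H^*\}$, verifying that the "intersection-forced" neighbors can always be taken to be outer-face-consecutive in the recursively constructed $Q'$, which in turn requires choosing a sufficiently canonical embedding rule in the induction so that the outer-face order faithfully mirrors the cyclic arc order on $C$.
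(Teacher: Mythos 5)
Your proof plan takes a genuinely different route from the paper, but it has a gap at its foundation that I don't think can be repaired as written.

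The paper's proof does not peel off subgraphs one at a time. Instead it inducts on $N(C,\HH) = \sum_{H\in\HH}(n_H - 1)$, the total number of ``extra'' runs. The inductive step picks a subgraph $H_0$ whose shortest chord $d_{H_0}=\{u_0,v_0\}$ joins two of its runs across a minimal gap, splits the cycle along $d_{H_0}$ into two derived cycle systems $(C,\HH_L,\KK)$ and $(C,\HH_R,\KK)$, proves both remain strong $axax$-free (Proposition~\ref{prop:propp}), constructs supports $Q_L$ and $Q_R$ recursively (the $R$ side collapses to the single-run case, Lemma~\ref{lem:basecase}, which itself inducts on $N(C,\KK)$ and chord-splits on $\KK$), and glues $Q_L$ and $Q_R$ by identifying the two copies of $H_0$. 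All the work is in maintaining, at every level, that the outer face of the constructed support is the lex.\ cyclic order $C(\HH)$, so the two copies of $H_0$ and the subgraphs containing $u_0$ or $v_0$ appear consecutively and the gluing is outerplanar.

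The gap in your plan is Step 2. The claim that $axax$-freeness plus containment-freeness forces the arcs $\arc(H)$ into a forest-like hierarchy (disjoint, sharing a single endpoint, or nested) is false. Take $H$ and $H'$ both inducing single runs $[1,5]$ and $[3,8]$ on a cycle of length $10$: they overlap in $[3,5]$, neither contains the other, and the pair is trivially $axax$-free (in fact, any two single runs are). Containment-freeness does not exclude this configuration. So there is no forest, no ``leaf'' arc, and the extremal choice of $H^*$ in Step 3 and the identification of its two ``arc-neighbors'' are undefined in exactly the generic case. Separately, the plan never confronts the multi-run case: when $H$ has several runs, $\arc(H)$ contains vertices of $C$ not in $V(H)$, so the $axax$-free condition on vertex sets does not transfer to the arcs, and the ``smallest enclosing arc'' loses precisely the structure the paper exploits by tracking the minimizing chord $d_H$ and the number of runs $n_H$. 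Finally, even granting an attachment of $H^*$ to two outer-face-consecutive vertices, the connectivity check in Step 4 needs the much stronger invariant (which the paper maintains explicitly) that the outer face of the recursively built support equals $C(\HH)$ in lex.\ cyclic order; without that, a $K$ meeting $H^*$ and several non-consecutive members of $\HH$ need not hit either chosen neighbor, and the strong $axax$-free property alone does not rescue a two-edge attachment. To make a one-at-a-time peeling work you would essentially have to reprove the lex.-cyclic-order invariant, at which point you have recreated the paper's chord-splitting construction in a less modular form.
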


Now, Theorem~\ref{thm:np-outintsupport} is a direct implication of Lemma~\ref{lem:npaxax-free} and Theorem~\ref{thm:outintsupport}.

\outerplanar*
\begin{proof}
Let $C$ be the outer cycle in an outerplanar embedding of $G$.
By Lemma~\ref{lem:npaxax-free}, $(C,\HH,\KK)$ is strong $axax$-free.
By Theorem~\ref{thm:outintsupport}, there is an outerplanar support $\tilde{Q}$ for $(C,\HH,\KK)$.
$\tilde{Q}$ is also a support for $(G,\HH,\KK)$ since the underlying intersection hypergraph defined by $\HH$ and $\KK$ remains the same on $G$ and on $C$.
\end{proof}

For Theorem~\ref{thm:np-outintsupport} to go through,
it remains to prove Theorem~\ref{thm:outintsupport} which we do in Section \ref{sec:construction-opsupport}. Below, we show that we can simplify an $axax$-free cycle system so that each vertex of $C$ is contained in at least one subgraph in $\mathcal{H}$ and
one subgraph in $\mathcal{K}$.

\begin{definition}[Reduced cycle system]\label{defn:reducedsystem}
A cycle system $(C,\HH,\KK)$ is called \emph{reduced} if for each vertex $v$ of $C$, there is an $H\in\HH$ and a $K\in\KK$ such that $v\in V(H)\cap V(K)$.
\end{definition}

Consider a cycle system $(C,\mathcal{H,K})$.
Let $C'$ be the induced cycle obtained from $C$ by removing each vertex $v$ of $C$ and making its neighbours adjacent, whenever $v\notin V(H)$ for all $H\in\HH$ or $v\notin V(K)$ for all $K\in\KK$.
Let $\mathcal{H}'=\{H\cap C':H\in\mathcal{H}\}$ and $\mathcal{K}'=\{K\cap C':K\in\mathcal{K}\}$.
It follows that $(C',\HH',\KK')$ is a reduced cycle system.
The following proposition shows that it is sufficient to construct a support when $(C,\mathcal{H,K})$ is a reduced system.

\begin{proposition}\label{prop:reducedsystem}
If $(C,\mathcal{H,K})$ is a strong $axax$-free system, 
then the reduced system $(C',\mathcal{H',K'})$ is also a strong $axax$-free system.
Further, any intersection support for $(C',\mathcal{H',K'})$ is also an intersection support for $(C,\mathcal{H,K})$.
\end{proposition}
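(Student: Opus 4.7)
\textbf{Proof plan for Proposition~\ref{prop:reducedsystem}.}
The plan is to exploit the fact that the vertices removed in passing from $C$ to $C'$ play no role in the hypergraph structure, so that essentially nothing changes under the reduction. First I would observe that the map $H\mapsto H\cap C'$ is a \emph{bijection} $\mathcal{H}\to\mathcal{H}'$ (and similarly $\mathcal{K}\to\mathcal{K}'$). This is because a vertex $v$ is removed only if it lies in no $H\in\mathcal{H}$ or in no $K\in\mathcal{K}$, so removed vertices do not belong to any subgraph in $\mathcal{H}$; hence $H_1\cap C'=H_2\cap C'$ forces $V(H_1)=V(H_2)$, and since we work with induced subgraphs this gives $H_1=H_2$. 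In particular, for every $H\in\mathcal{H}$ and $K\in\mathcal{K}$ one has $V(H)\cap V(K)=V(H\cap C')\cap V(K\cap C')$, because any shared vertex already lies in some element of $\mathcal{H}$ and some element of $\mathcal{K}$, and hence survives in $C'$.

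Next I would note that contracting a vertex out of a cycle and joining its two neighbours preserves the cyclic order of the remaining vertices. Consequently, a 4-tuple $a_1,x_1,a_2,x_2$ appearing in cyclic order on $C'$ also appears in the same cyclic order on $C$. Using this together with the bijection $\mathcal{H}\leftrightarrow\mathcal{H}'$, I can transfer any $axax$-witness in $(C',\mathcal{H}')$ back to an $axax$-witness in $(C,\mathcal{H})$: from $a_1,a_2\in (H\cap C')\setminus(H'\cap C')$ and $x_1,x_2\in H'\cap C'$ we recover $a_1,a_2\in H\setminus H'$ and $x_1,x_2\in H'$, contradicting the $axax$-freeness of $(C,\mathcal{H})$. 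The identical argument for $(C',\mathcal{K}')$ and the same transfer argument for the intersection property (four points $h_1,k_1,h_2,k_2$ in cyclic order on $C'$ with $h_i\in H\cap C'$, $k_i\in K\cap C'$ lift to the same witnesses on $C$) show that $(C',\mathcal{H}',\mathcal{K}')$ inherits the strong $axax$-free property.

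For the second statement, let $\tilde{Q}$ be an intersection support for $(C',\mathcal{H}',\mathcal{K}')$; its vertex set is in bijection with $\mathcal{H}$, so we may view $\tilde{Q}$ as a graph on $\mathcal{H}$. Fix $K\in\mathcal{K}$ and let $K'=K\cap C'\in\mathcal{K}'$. By the equality $V(H)\cap V(K)=V(H\cap C')\cap V(K')$ established above, an element $H\in\mathcal{H}$ belongs to $\mathcal{H}_K$ if and only if its image $H\cap C'$ belongs to $\mathcal{H}'_{K'}$. Since $\tilde{Q}[\mathcal{H}'_{K'}]$ is connected by hypothesis, so is $\tilde{Q}[\mathcal{H}_K]$, proving that $\tilde{Q}$ is an intersection support for $(C,\mathcal{H},\mathcal{K})$.

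The argument is essentially bookkeeping; there is no real obstacle beyond the one conceptual point, namely verifying that no information is lost by deleting the uncovered vertices. The only place where one must be slightly careful is ensuring that distinct subgraphs in $\mathcal{H}$ (or $\mathcal{K}$) do not collapse to the same subgraph after intersecting with $C'$, which is exactly where the hypothesis ``$v$ is removed only if $v$ lies in no subgraph of the relevant family'' is used.
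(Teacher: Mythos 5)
Your overall approach matches the paper's: preserve cyclic order so that any $axax$-witness or intersection-property violation on $C'$ lifts back to $C$, and observe that the intersection hypergraph is unaffected because $V(H)\cap V(K)\subseteq V(C')$ for every $H\in\HH$ and $K\in\KK$. That second observation, which you state as $V(H)\cap V(K)=V(H\cap C')\cap V(K\cap C')$, is the correct key fact and is exactly what the paper uses.

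However, your bijectivity argument contains a logical slip. You write that ``a vertex $v$ is removed only if it lies in no $H\in\mathcal{H}$ or in no $K\in\mathcal{K}$, so removed vertices do not belong to any subgraph in $\mathcal{H}$.'' The disjunction does not yield that conclusion: a vertex can be removed because it lies in no $K\in\KK$ while still lying in several $H\in\HH$. So the map $H\mapsto H\cap C'$ need not be injective; two subgraphs $H_1\neq H_2$ that agree on all $\KK$-covered vertices collapse to the same element of $\HH'$. This means you cannot simply ``view $\tilde{Q}$ as a graph on $\mathcal{H}$.'' The gap is repairable with the fact you already derived: if $H_1\cap C'=H_2\cap C'=H'$, then $H_1\in\HH_K\Leftrightarrow H'\in\HH'_{K'}\Leftrightarrow H_2\in\HH_K$ for every $K$, so the collapsed subgraphs intersect exactly the same members of $\KK$. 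One can therefore extend a support $\tilde{Q}$ on $\HH'$ to one on $\HH$ by attaching each non-representative $H$ as a pendant vertex to its image $H'$, in the style of Lemma~\ref{lem:contfree}; this preserves outerplanarity and the support property. With that patch the argument is complete; as written it rests on a false premise.
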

\begin{proof}
If $(C,\HH,\KK)$ is strong $axax$-free, removing an $\mathcal{H}$-vertex or a $\mathcal{K}$-vertex leaves the reduced system $(C',\HH',\KK')$ is strong 
$axax$-free and the resulting intersection hypergraph remains the same since we did not remove any vertex $v\in H\cap K$ for any $H\in\HH$ and $K\in\KK$.
Hence, a support for the reduced system is also a support for the original system.
\end{proof}

As a consequence of Proposition~\ref{prop:reducedsystem}, 
we assume throughout that $(C,\mathcal{H},\mathcal{K})$ is a reduced system.

\subsection{Construction of Outerplanar Supports}\label{sec:construction-opsupport}
In this section, we prove Theorem \ref{thm:outintsupport}.
We start with some basic terminology required for the proof.
Let $C=\{0,\ldots, n-1\}$ be a cycle on $n$ vertices oriented clockwise, and let $\mathcal{R}$ be a collection of arcs on $C$ whose both ends are defined by the vertices of $C$ such
that no two arcs in $\mathcal{R}$ contain the same set of vertices.

For an $R\in\mathcal{R}$, if $R=\arc[i,j]$, i.e., $R$ consists of a 
consecutive sequence of vertices, i.e., 
$R=[i, i+1, \ldots, j]$ where the indices are numbered~$\pmod n$, 
we say that $R$ is a \emph{run} on $C$. We also use $\arc(i,j)$ to 
denote the run $[i+1,\ldots, j-1]$. 
Let $s(R)=i$ and $t(R)=j$.
Consider a pair of arcs $R, R'\in\mathcal{R}$ such that $R\subseteq R'$. In a traversal of $C$ starting at $s(R')$, if we have 
$s(R')<s(R)<t(R)<t(R')$, 
then we say that $R$ is \emph{strictly contained} in $R'$. If $s(R)=s(R')$, or 
$t(R)=t(R')$, then we say that $R$ is \emph{weakly contained} in $R'$. 
If there is no pair of arcs $R, R'\in\mathcal{R}$ such that $R$ is strictly contained in $R'$, then we say that
$\mathcal{R}$ is \emph{strict-containment free}. 
If there is no pair of arcs $R, R'\in\mathcal{R}$
that are weakly contained in each other or strictly contained in each other, then $\mathcal{R}$ is \emph{containment-free} i.e., $R\setminus R'\ne\emptyset$ for all $R,R'\in\mathcal{R}$. See Figure
\ref{fig:singleruns}.

\begin{figure}[ht!]
\begin{subfigure}{0.45\textwidth}
\begin{center}
\includegraphics[scale=.9]{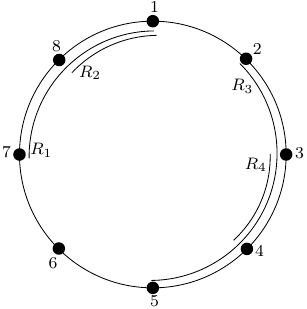}
\caption{$R_2$ is weakly contained in $R_1$, and $R_4$ is strictly contained in $R_3$.}
\label{fig:weakandstrict}
\end{center}
\end{subfigure}
\hfill
\begin{subfigure}{0.45\textwidth}
\begin{center}
\includegraphics[scale=.9]{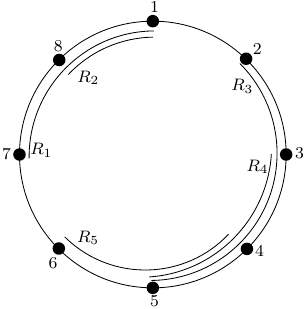}
\caption{\vspace{0.5cm}Strict-containment free}
\label{fig:strictcontfree}
\end{center}
 \end{subfigure}
 \caption{Weak containment, strict containment and strict-containment free. Induced subgraphs of the cycle are shown by corresponding arcs inside the cycle.}
 \label{fig:singleruns}
 \end{figure}

Strict containment defines a natural partial order on $\mathcal{R}$.
Let $\mathcal{R}^*\subseteq\mathcal{R}$ denote the maximal elements of this \emph{strict-containment order}.
Then, $\mathcal{R^*}$ is a maximal strict-containment free subset of $\mathcal{R}$.
Note that $\mathcal{R}^*$ can contain subgraphs that are weakly contained in each other.

An immediate consequence of a cycle system $(C,\mathcal{H},\mathcal{K})$ being $axax$-free that will be useful later
is the following proposition which is a 
slight variation of Lemma~\ref{lem:contfree}. 
The proof is along the same lines as of Lemma~\ref{lem:contfree}.

\begin{proposition}
\label{prop:wcontfree}
Let $(C,\mathcal{H,K})$ be a cycle $axax$-free system where subgraphs in $\mathcal{H}$ are single runs, and let $\mathcal{H}^*$ be the maximal elements
in the strict-containment order on $\mathcal{H}$.
If $(C,\mathcal{H^*,K})$ admits an outerplanar support, then so does $(C,\mathcal{H,K})$.
\end{proposition}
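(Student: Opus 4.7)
The plan is to mirror the proof of Lemma~\ref{lem:contfree}, but using strict containment in place of general containment. Concretely, let $\tilde{Q}^*$ be an outerplanar intersection support for $(C,\mathcal{H}^*,\mathcal{K})$. I would construct $\tilde{Q}$ from $\tilde{Q}^*$ by adding, for each $H\in\mathcal{H}\setminus\mathcal{H}^*$, a new vertex labeled $H$ and connecting it by a single edge to some fixed $H'\in\mathcal{H}^*$ in which $H$ is strictly contained (breaking ties arbitrarily). Such an $H'$ exists for every $H\in\mathcal{H}\setminus\mathcal{H}^*$ precisely because $\mathcal{H}^*$ is the set of maximal elements of the strict-containment order on $\mathcal{H}$.

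For outerplanarity, I would argue that the new vertices are all of degree one in $\tilde{Q}$, and attaching pendant vertices to any outerplanar graph preserves outerplanarity (the new vertex can be placed on the outer face next to its neighbour). Thus $\tilde{Q}$ is outerplanar.

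The bulk of the argument is the support property. Fix $K\in\mathcal{K}$ and consider $\mathcal{H}_K=\{H\in\mathcal{H}:V(H)\cap V(K)\ne\emptyset\}$; I want to show $\tilde{Q}[\mathcal{H}_K]$ is connected. By hypothesis, $\tilde{Q}^*[\mathcal{H}^*_K]$ is connected, so it suffices to show that every $H\in\mathcal{H}_K\setminus\mathcal{H}^*$ is adjacent in $\tilde{Q}$ to some vertex of $\mathcal{H}^*_K$. By construction, $H$ is adjacent to an $H'\in\mathcal{H}^*$ with $H$ strictly contained in $H'$; in particular $V(H)\subseteq V(H')$, so $V(H')\cap V(K)\supseteq V(H)\cap V(K)\ne\emptyset$, giving $H'\in\mathcal{H}^*_K$. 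Hence $\tilde{Q}[\mathcal{H}_K]$ is obtained from $\tilde{Q}^*[\mathcal{H}^*_K]$ by attaching pendant vertices and is therefore connected.

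I do not expect a serious obstacle; the one subtlety to flag is that $\mathcal{H}^*$ need not be containment-free in the general sense (two runs sharing an endpoint may both lie in $\mathcal{H}^*$), so one cannot literally invoke Lemma~\ref{lem:contfree}. However, this does not affect the argument: the reduction only requires that \emph{every} $H\in\mathcal{H}\setminus\mathcal{H}^*$ be strictly dominated by a member of $\mathcal{H}^*$, which is exactly the definition of $\mathcal{H}^*$ as the set of maximal elements in the strict-containment order. The $axax$-free hypothesis on $(C,\mathcal{H},\mathcal{K})$ is not needed in this reduction itself, but is implicitly used later when one applies Theorem~\ref{thm:outintsupport} to obtain the support for $(C,\mathcal{H}^*,\mathcal{K})$.
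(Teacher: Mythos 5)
Your proposal is correct and takes essentially the same approach the paper intends: the paper explicitly notes that the proof of Proposition~\ref{prop:wcontfree} is ``along the same lines as'' Lemma~\ref{lem:contfree}, namely attaching each $H\in\mathcal{H}\setminus\mathcal{H}^*$ as a pendant to a strictly-dominating element of $\mathcal{H}^*$ and observing that pendants preserve both outerplanarity and the connectivity of each $\tilde{Q}[\mathcal{H}_K]$. Your flagged subtlety (that $\mathcal{H}^*$ here is only strict-containment free, so Lemma~\ref{lem:contfree} cannot be invoked verbatim) and your observation that the $axax$-free hypothesis plays no role in this particular reduction are both accurate.
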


As a consequence of Lemma \ref{lem:contfree}, or Proposition \ref{prop:wcontfree}, we can assume that $\mathcal{H}$ is either
containment-free, or it is strict-containment free (as will be useful in our proofs).

Let $\mathcal{R}$ be a collection of arcs on a cycle $C$ that are strict-containment free. 
Starting at an arbitrary vertex on $C$, in a clockwise traversal, we construct a cycle $C(\mathcal{R})$ on
$\mathcal{R}$ as follows: 
When we visit a vertex $v$ of $C$, for each $R\in\mathcal{R}$ 
s.t. $s(R)=v$,
we put a vertex in $C(\mathcal{R})$. The vertices added to $C(\mathcal{R})$ w.r.t. $v$
are ordered in increasing order of $t(R)$, with ties broken
arbitrarily.
We call the cycle $C(\mathcal{R})$ thus constructed, the
cycle in \emph{lex. cyclic order}, and we say that $\mathcal{R}$
is ordered in \emph{lex. cyclic order}.

In a cycle system $(C,\mathcal{H},\mathcal{K})$, each subgraph $X\in\mathcal{H}\cup\mathcal{K}$ induces
a sequence of runs on $C$. Let $n_X$ denote the number of runs of $X$ in $C$.
Let $r_0(X)=[s_0,\ldots, t_0], r_1(X)=[s_1,\ldots, t_1], \ldots, r_{k-1}=[s_{k-1},\ldots, t_{k-1}]$ denote the
$k=n_X$ runs of $X$ on $C$. 
For $i=0,\ldots, k-1$, let $d_i$ denote the \emph{chord} $\{t_i, s_{i+1}\}$. 
Let $\ell(d_i)=|\arc[t_i, s_{i+1}]|$, and let 
$\ell(X) = \arg\min_{d_i} \ell(d_i)$, with ties broken arbitrarily. If $n_X=1$, then $d_X$ is undefined.

The proof of Theorem \ref{thm:outintsupport} proceeds in \emph{three easy steps}.
We start with the case when all subgraphs in $\mathcal{H}\cup \mathcal{K}$ induce single runs on $C$. 
We next use this result to 
obtain an outerplanar support for the case when only the
subgraphs in $\mathcal{H}$ are required to induce single runs on $C$.
Finally, we use the outerplanar supports guaranteed by the two special
cases to obtain an outerplanar support for the general case.

Let each subgraph in $\mathcal{H}\cup\mathcal{K}$ induce a single run
on $C$. For the requirement in the general settings, we assume by Proposition~\ref{prop:wcontfree} that $\mathcal{H}$ is strict-containment free. We claim that $C(\mathcal{H})$, the cycle on
$\mathcal{H}$ in lex. cyclic order is the desired support for $(C,\mathcal{H},\mathcal{K})$.

\begin{lemma}\label{lem:cyclesupport}
Let $(C,\mathcal{H,K})$ be a cycle system such that $\mathcal{H}$ is strict-containment free and each $X\in\mathcal{H\cup K}$ induce a single run on $C$.
Then, the cycle $C(\mathcal{H})$ is an outerplanar support for $(C,\mathcal{H,K})$.
\end{lemma}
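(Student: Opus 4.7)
The plan is to establish two things: that $C(\mathcal{H})$ is outerplanar, and that it is indeed an intersection support for $(C,\mathcal{H},\mathcal{K})$. The first is immediate, since $C(\mathcal{H})$ is by construction a simple cycle. The substantive content is therefore to show that for every $K\in\mathcal{K}$, the set $\mathcal{H}_K=\{H\in\mathcal{H}:H\cap K\ne\emptyset\}$ forms a contiguous arc of the cycle $C(\mathcal{H})$ (and hence induces a connected subgraph).

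Fix $K\in\mathcal{K}$, write $K=\arc[p,q]$, and let $L=\arc[q+1,p-1]$ be its complement on $C$. Since each $H\in\mathcal{H}$ is a single run, $H\notin\mathcal{H}_K$ iff $H\subseteq L$. I will argue the contrapositive, namely that $\mathcal{H}\setminus\mathcal{H}_K$ is a contiguous arc on $C(\mathcal{H})$. To do so, I start the lex cyclic traversal of $C$ at $s(L)=q+1$ and interpret ``increasing order of $t(R)$'' as ordering by the clockwise distance of $t(R)$ from $q+1$. With this choice, for each fixed $v\in L$, arcs from $v$ that stay in $L$ (those with $t(H)\in\arc[v,p-1]$) precede the arcs from $v$ that cross into $K$; and arcs with $s(H)\in K$ all lie in $\mathcal{H}_K$ and occupy the ``tail'' of the cyclic order.

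The key step, and the only one that uses strict-containment-freeness of $\mathcal{H}$, is the following: once an arc from $L$ extending into $K$ has been placed, no arc contained in $L$ can appear later. Suppose for contradiction that $H_2=\arc[v_1,t_2]\in\mathcal{H}$ with $v_1\in L$ and $t_2$ clockwise past $p-1$, and that some strictly later $v_2\in L$ admits an arc $H_3=\arc[v_2,t_3]\in\mathcal{H}$ with $H_3\subseteq L$. Then clockwise from $v_1$ we encounter $v_2,\ldots,t_3,\ldots,p-1,p,\ldots,t_2$, so $V(H_3)\subsetneq V(H_2)$; since $s(H_3)\ne s(H_2)$ and $t(H_3)\ne t(H_2)$, this is a strict containment, contradicting the hypothesis on $\mathcal{H}$.

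Consequently, in the lex cyclic order starting at $q+1$, the arcs contained in $L$ form a (possibly empty) prefix, and every subsequent arc lies in $\mathcal{H}_K$. Hence $\mathcal{H}_K$ is a contiguous arc of $C(\mathcal{H})$ and therefore induces a connected subgraph, so $C(\mathcal{H})$ is an outerplanar intersection support for $(C,\mathcal{H},\mathcal{K})$. The main subtlety I anticipate is the careful handling of boundary vertices and of the ``increasing $t$'' convention; the strict-containment-free hypothesis is exactly what rules out the only way in which $L$-arcs and $K$-crossing arcs could have been interleaved along the cycle.
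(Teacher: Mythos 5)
Your overall strategy is on the right track, and the ``key step'' --- that a type-$\arc[v_1,t_2]$ arc with $t_2\in K$ followed by a later $L$-contained arc $\arc[v_2,t_3]$ would give a strict containment --- is exactly the right use of the hypothesis. The paper's own proof is one sentence and asserts the same conclusion you are arguing for. However, two things in your write-up do not hold up.

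First, you \emph{redefine} the ordering inside $C(\mathcal{H})$ as ``clockwise distance of $t(R)$ from $q+1$,'' where $q+1$ depends on $K$. The support must be a single cycle constructed once and working for all $K\in\mathcal{K}$ simultaneously, so the ordering at a vertex $v$ must be intrinsic (e.g., by clockwise distance of $t(R)$ from $v$, i.e.\ by arc length). Your $K$-dependent convention actually produces a \emph{different} cycle for different choices of starting vertex when $\mathcal{H}$ has arcs that wrap around the label~$0$, so it is not a well-posed construction of $C(\mathcal{H})$.

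Second, and more concretely, you ignore the ``wrap-around'' arcs: arcs $\arc[v,t]$ with $v,t\in L$ but $t$ \emph{counterclockwise} of $v$, which contain all of $K$ and hence lie in $\mathcal{H}_K$. Under your ordering, at a vertex $v\in L$ these arcs have the \emph{smallest} distance to $q+1$ and therefore precede the $L$-contained arcs at $v$, so the arcs not in $\mathcal{H}_K$ are \emph{sandwiched} between members of $\mathcal{H}_K$ rather than forming a prefix. For a concrete failure, take $C=\{0,\dots,9\}$, $K=\arc[5,6]$, and $\mathcal{H}=\{\arc[8,0],\,\arc[9,7],\,\arc[9,2],\,\arc[0,7]\}$, which is strict-containment-free. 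Your ordering (starting at $7$, ordering $t$ by distance from $7$) yields the cycle $(\arc[8,0],\,\arc[9,7],\,\arc[9,2],\,\arc[0,7])$, and $\mathcal{H}_K=\{\arc[9,7],\arc[0,7]\}$ is not an edge of this $4$-cycle. With the intrinsic arc-length ordering the cycle is $(\arc[0,7],\arc[8,0],\arc[9,2],\arc[9,7])$ and $\mathcal{H}_K$ is contiguous, as required. To fix the proof: use the intrinsic ordering, observe that at each $v\in L$ arcs appear in the order ($L$-contained, $K$-crossing, wrap-around) so that $\mathcal{H}\setminus\mathcal{H}_K$ is a prefix at $v$; and extend your strict-containment argument to also rule out a wrap-around arc at $v_1$ being followed by an $L$-contained arc at $v_2>v_1$ --- the same inequality $s(H_2)<s(H_3)<t(H_3)<t(H_2)$ applies verbatim.
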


\begin{proof}
$C(\mathcal{H})$ is clearly an outerplanar graph. Consider a subgraph $K\in\mathcal{K}$. Since $K$ induces a single run on $C$ 
and the subgraphs in $\mathcal{H}$ are strict-containment free, 
it follows that the subgraphs in $\mathcal{H}_K$ appear consecutively in lex. cyclic order on $C(\mathcal{H})$ and thus induce a connected subgraph of $C(\HH)$.
\end{proof}

Now, assume that subgraphs in $\mathcal{H}$ induce single runs on $C$, and those in $\KK$ can have multiple runs.
We claim that there is an outerplanar support with outer cycle $C(\mathcal{H})$. But, before we do that, we start with the following
simple consequence of $(C,\mathcal{H},\mathcal{K})$ being $axax$-free.

\begin{proposition}\label{prop:chordpartition}
Let $(C,\mathcal{H})$ be $axax$-free. For an $H\in\mathcal{H}$, let $u,v$ be two non-consecutive vertices on $C$ with $u,v\in H$.
Then, for any $H'\in\mathcal{H}$ s.t. $H'\cap \arc(u,v)\neq\emptyset\neq H'\cap \arc(v,u)$, $H'$ must contain either $u$ or $v$.
Moreover, if $H\cap \arc(u,v)=\emptyset$, then $H'\cap \arc[v,u]\subseteq H\cap \arc[v,u]$.
\end{proposition}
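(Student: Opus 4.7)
The plan is to prove both claims by direct contradiction, each time exhibiting four distinct vertices in cyclic order on $C$ that form an $axax$-pair for some ordered pair taken from $\{H, H'\}$, and then invoking the $axax$-free hypothesis.

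For the first assertion, I would assume for contradiction that $u \notin H'$ and $v \notin H'$. Using the hypothesis, pick any $x_1 \in H' \cap \arc(u,v)$ and any $x_2 \in H' \cap \arc(v,u)$. Going clockwise on $C$, the four vertices appear in the cyclic order $u, x_1, v, x_2$ and are pairwise distinct because $x_1, x_2$ lie in disjoint open arcs and avoid $u, v$. By construction $u, v \in H \setminus H'$ and $x_1, x_2 \in H'$, so $(H, H')$ is an $axax$-pair, contradicting that $(C, \mathcal{H})$ is $axax$-free. Hence $H'$ must contain $u$ or $v$.

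For the moreover, I would again argue by contradiction. Assume $H \cap \arc(u,v) = \emptyset$ and suppose there exists $w \in H' \cap \arc[v,u]$ with $w \notin H$. Since $u, v \in H$, we must have $w \ne u, v$, so $w$ lies strictly inside $\arc(v,u)$. Pick $x \in H' \cap \arc(u,v)$, which exists by the hypothesis on $H'$; because $H$ avoids $\arc(u,v)$, we get $x \notin H$ as well. The four distinct vertices $x, v, w, u$ appear in this cyclic order on $C$, with $x, w \in H' \setminus H$ and $v, u \in H$. Setting $a_1 = x, x_1 = v, a_2 = w, x_2 = u$ realises the ordered pair $(H', H)$ as an $axax$-pair, again contradicting the $axax$-free hypothesis.

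There is no real obstacle: the proposition is essentially an unpacking of the definition of $axax$-freeness. The only subtlety worth flagging is that the second part applies $axax$-freeness to the ordered pair $(H', H)$ rather than $(H, H')$, with $x, w$ playing the role of the ``$a$''-vertices and $u, v$ the role of the ``$x$''-vertices; verifying distinctness of the four chosen vertices is the one place where the conditions $u, v \in H$ and $H \cap \arc(u,v) = \emptyset$ are used together.
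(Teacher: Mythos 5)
Your proof is correct and matches the paper's own argument almost step for step: both parts are settled by exhibiting the same four-vertex cyclic witness and invoking $axax$-freeness, with the first part using the pair $(H,H')$ and the ``moreover'' part using $(H',H)$. The only difference is presentational — you name the roles of the four vertices and argue distinctness more explicitly than the paper does.
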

\begin{proof}
Let $x $ and $y$ be any two vertices of $H'$ in $\arc(u,v)$ and $\arc(v,u)$ respectively, and let $u,v\in H\setminus H'$.
The vertices $u,x,v,y$ form an $axax$-pattern on $C$; a contradiction since $(C,\mathcal{H})$ is $axax$-free.

The proof for the \emph{moreover} part is similar.
Indeed, let $h'\in \arc[v,u]$ s.t. $h'\in H'\setminus H$.
Also, note that $x\in H'\setminus H$. Hence, the subgraphs $H,H'$ form an $axax$-pattern as witnessed by the cyclic sequence $h',u,x,v$; a contradiction.
\end{proof}

\begin{lemma}\label{lem:basecase}
Let $(C,\mathcal{H,K})$ be any $axax$-free cycle system s.t. each $H\in\mathcal{H}$ induces 
a single run on $C$.
If $\mathcal{H}$ is strict-containment free, then there exists an outerplanar support for $(C,\mathcal{H,K})$ with outer cycle $C(\mathcal{H})$.
\end{lemma}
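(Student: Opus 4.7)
The plan is to construct the outerplanar support $Q$ by taking $C(\mathcal{H})$ as the outer cycle and adding a carefully chosen set of non-crossing chords inside it to account for the multi-run subgraphs of $\mathcal{K}$. Before choosing the chords, I would first apply Proposition~\ref{prop:wcontfree} so that I may additionally assume $\mathcal{H}$ is strict-containment free, as allowed by the lemma's hypothesis.

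The first step is to understand the structure of $\mathcal{H}_K$ inside $C(\mathcal{H})$. For $K\in\mathcal{K}$ and each run $r_i(K)$, define $B_i(K)=\{H\in\mathcal{H}_K : H\cap r_i(K)\ne\emptyset\}$. Using the strict-containment freeness of $\mathcal{H}$ together with the fact that $r_i(K)$ is a single run, I would show that $B_i(K)$ is a consecutive block of vertices in $C(\mathcal{H})$: an $H$ lying between two members of $B_i(K)$ in lex.\ cyclic order but missing $r_i(K)$ would be forced to be strictly contained in one of them, contradicting strict-containment freeness. Consequently $\mathcal{H}_K=\bigcup_i B_i(K)$ decomposes into at most $n_K$ consecutive blocks on $C(\mathcal{H})$, with possible overlaps at endpoints when an $H$ intersects two adjacent runs of $K$. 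Making $\mathcal{H}_K$ connected in $Q$ therefore reduces to adding at most $n_K-1$ chords per $K$, one across each gap whose adjacent blocks are not already linked.

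For the second step, I would process the collection of gaps $\{d : d$ is a gap of some $K\in\mathcal{K}\}$ in non-decreasing order of length $\ell(d)$, adding, for each gap $d=\arc(t_i,s_{i+1})$ of $K$, the chord joining the last vertex of $B_i(K)$ to the first vertex of $B_{i+1}(K)$ in lex.\ cyclic order (and skipping the chord if these blocks are already in the same component via previously added chords). Applying Proposition~\ref{prop:chordpartition} to $(C,\mathcal{K})$, I would argue that any $K'\in\mathcal{K}$ whose blocks straddle $d$ must either share an endpoint of $d$ or be entirely contained in $\arc[t_i, s_{i+1}]$. Combined with the minimality of $\ell(d)$ among the gaps not yet processed, this forces the chord added for $d$ to nest properly with all chords already present.

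The main obstacle is precisely this non-crossing argument, since an outerplanar graph with a fixed outer cycle is characterised by the property that every pair of chords either shares an endpoint or is nested. I expect the $axax$-free hypothesis on $\mathcal{K}$ to rule out exactly the interleaving configuration that would prevent nesting, and the greedy ordering by gap length to ensure that any potential conflict comes from a longer chord that can be routed around the shorter one rather than across it. Once non-crossingness is established, $Q$ is outerplanar with outer cycle $C(\mathcal{H})$ and $\mathcal{H}_K$ is connected in $Q$ for every $K\in\mathcal{K}$, completing the proof of the lemma.
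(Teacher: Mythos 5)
Your high-level approach — build the support directly as $C(\mathcal{H})$ plus a set of non-crossing internal chords, added greedily in increasing order of gap length — is a flattened version of what the paper does, and the first step (showing each $B_i(K)$ is a consecutive block of $C(\mathcal{H})$, using strict-containment freeness) is correct. But the proof has a genuine gap precisely at what you yourself call ``the main obstacle'': you never actually prove that the chords you add are pairwise non-crossing. You write ``I expect the $axax$-free hypothesis on $\mathcal{K}$ to rule out exactly the interleaving configuration that would prevent nesting, and the greedy ordering by gap length to ensure that any potential conflict comes from a longer chord that can be routed around the shorter one,'' but this is a conjecture, not an argument — and ``routed around'' is not a well-defined operation for a chord whose two endpoints are already fixed by your construction. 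The difficulty is compounded by a layer of indirection you do not address: your chords join vertices of $C(\mathcal{H})$ (each of which corresponds to an entire arc of $C$), so nesting of the underlying gaps of $K, K'$ on $C$ does not automatically yield nesting of the chosen chord-endpoints on $C(\mathcal{H})$, especially when two gaps share a boundary vertex or when several subgraphs of $\mathcal{H}$ start at the same vertex of $C$. The treatment of the case where $K'$ contains an endpoint of the gap $d$ of $K$, which Proposition~\ref{prop:chordpartition} tells you is the only way $K'$ can straddle $d$, is exactly where the delicate work lies, and it is missing.

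The paper avoids this difficulty by using the same ordering (shortest gap first, via $K_0=\arg\min_K \ell(K)$) but packaging it recursively: the shortest chord $d_{K_0}$ splits $C$ into $C_L$ and $C_R$, the right side has no gaps and is handled by Lemma~\ref{lem:cyclesupport}, the left side has strictly smaller $N=\sum_K(n_K-1)$ and is handled by induction, and the two supports are glued by \emph{identifying} two shared vertices ($H_1$ and $H'_s$) rather than adding a chord. Identification trivially preserves outerplanarity and the outer cycle, so the non-crossing question never has to be confronted globally — the inductive hypothesis supplies it for each side separately. If you want to salvage your direct approach, you would need an explicit lemma of the form: if $d,d'$ are gaps of $K,K'\in\mathcal{K}$ with $\ell(d)\le\ell(d')$, and neither is skipped, then the chord for $d$ and the chord for $d'$ in $C(\mathcal{H})$ either share a vertex or are nested. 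Proving this (handling shared gap endpoints, the ``skip if already connected'' rule, and the $C$-to-$C(\mathcal{H})$ translation of endpoints) is essentially equivalent in difficulty to re-deriving the paper's recursive argument, so as written the proposal does not constitute a proof.
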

\begin{proof} 
Let $N=N(C,\mathcal{K}) = \sum_{K\in\mathcal{K}}(n_K-1)$, where $n_K$ is the number of runs of $K$ on $C$.
We proceed by induction on $N$. If $N=0$, then $n_K=1$ for all $K\in\mathcal{K}$ and we are done by Lemma~\ref{lem:cyclesupport}.
So, suppose $N\ge 1$ i.e., $n_K\ge 2$ for some $K\in\mathcal{K}$.

We assume the lemma holds when $N(C,\mathcal{K}')<N$ for any $axax$-free system $(C,\mathcal{H,K'})$.
Let
$(C,\mathcal{H,K})$ be
an $axax$-free system with $N(C,\mathcal{K})=N$.
Let $K_0=\arg\min_{K\in\mathcal{K}}\ell(K)$.
Let $d_{K_0}=\{u_0, v_0\}$ be the chord of $K_0$ realizing this minimum.
We use the chord $d_{K_0}$ to split $C$
into two cycles.

Let $C_L$ denote the cycle $\arc[v_0, u_0]\cup d_{K_0}$, and $C_R$ denote the cycle $\arc[u_0, v_0]\cup d_{K_0}$. 
Let
$\mathcal{K}_X=\{K\cap C_X: K\in\mathcal{K}\}$, where
$X\in\{L, R\}$. Observe that $K_0$ appears both in $C_L$
and $C_R$, where in $C_R$, $K_0$ spans the vertices 
$\{u_0, v_0\}$.

Let $H_1,\ldots, H_r$ be the subgraphs in $\mathcal{H}_{u_0}$ 
in lex. cyclic order, i.e., 
labeled such that
$(H_r\cap C_L)\subseteq (H_{r-1}\cap C_L)\subseteq\ldots\subseteq (H_1\cap C_L)$.
Since $\mathcal{H}$ is strict-containment free, it follows that $(H_1\cap C_R)\subseteq(H_2\cap C_R)\subseteq\ldots\subseteq(H_r\cap C_R)$. 
Similarly, if $H'_1,\ldots, H'_s$ are the subgraphs in $\mathcal{H}_{v_0}$ in lex. cyclic order,
then $(H'_1\cap C_L)\subseteq (H'_2\cap C_L)\subseteq\ldots\subseteq (H'_s\cap C_L)$, and 
$(H'_s\cap C_R)\subseteq (H'_{s-1}\subseteq C_R)\ldots\subseteq (H'_1\cap C_R)$. Figure~\ref{fig:leftrightordering} shows the
subgraphs in $\mathcal{H}_{u_0}$ and in $\mathcal{H}_{v_0}$.
 
We let $\mathcal{H}_L=\{H\in\mathcal{H}: H\subseteq \arc(v_0, u_0)\}\cup\{H_1\cap C_L\}\cup\{H'_s\cap C_L\}$, 
and $\mathcal{H}_R = \{H\cap C_R: H\in\mathcal{H}\}$.  See Figure~\ref{fig:constructing H_L and H_R}.

\begin{figure}[h!]
    \centering
    \includegraphics[width=0.8\linewidth]{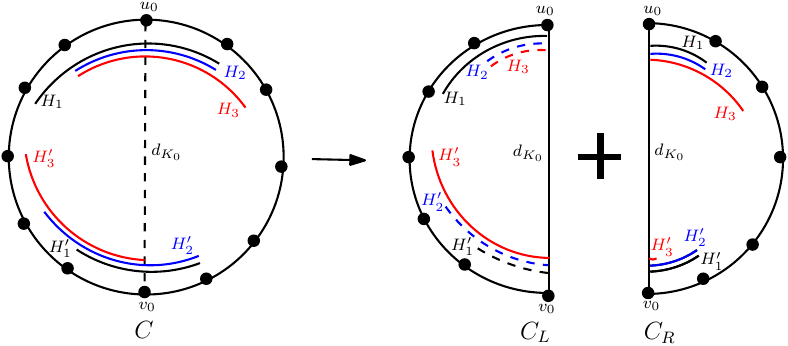}
    \caption{Ordering $\HH_{u_0}$ and $\HH_{v_0}$ subgraphs in $C_L$ and $C_R$. $(H_3\cap C_L)\subseteq (H_2\cap C_L)\subseteq(H_1\cap C_L)$, and $(H_1\cap C_R)\subseteq (H_2\cap C_R)\subseteq(H_3\cap C_R)$. Analogously, $(H'_1\cap C_L)\subseteq (H'_2\cap C_L)\subseteq(H'_3\cap C_L)$, and $(H'_3\cap C_R)\subseteq (H'_2\cap C_R)\subseteq(H'_1\cap C_R)$.}
    \label{fig:leftrightordering}
\end{figure}

\begin{figure}[h!]
    \centering
    \includegraphics[width=1\linewidth]{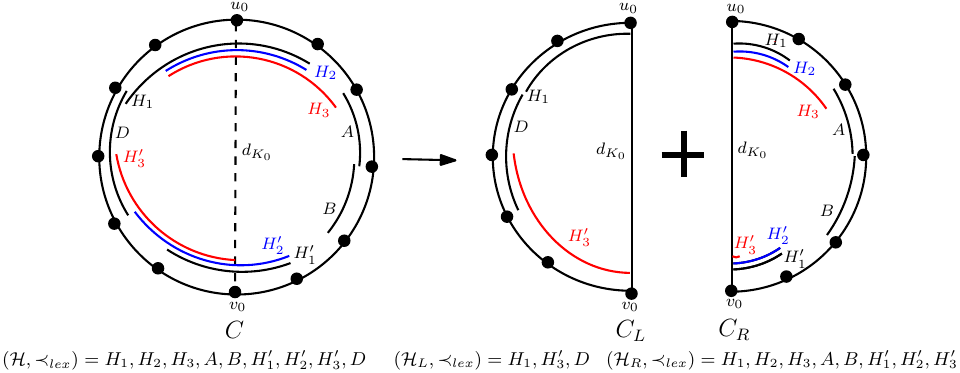}
    \caption{Construction of $\mathcal{H}_L$ and $\mathcal{H}_R$. Any $H\in\mathcal{H}\setminus\{H_1,H'_3\}$ containing $u_0$ or $v_0$ such that $(H\cap C_L)\subseteq (H_1\cap C_L)$ or $(H\cap C_L)\subseteq (H'_3\cap C_L)$ is not included in $\mathcal{H}_L$.}
    \label{fig:constructing H_L and H_R}
\end{figure}

Note that since $\mathcal{H}$
is strict-containment free, so are $\mathcal{H}_L$ and $\mathcal{H}_R$, and that each $H\in\mathcal{H}\setminus\{H_1,H'_s\}$ contributes a run to exactly one of $\mathcal{H}_L$ or $\mathcal{H}_R$, by construction.

By the choice of $K_0$, in the cycle system $(C_R,\mathcal{H}_R,\mathcal{K}_R)$, each $H\in\mathcal{H}_R$ and each $K\in\mathcal{K}_R$ induces a single run on $C_R$.
Hence, by Lemma~\ref{lem:cyclesupport}, there is an outerplanar support $Q_R$ for $(C_R,\mathcal{H}_R,\mathcal{K}_R)$ such that $Q_R=C(\mathcal{H}_R)$, the cycle on $\HH_R$ in lex. cyclic order.

The cycle system $(C_L,\mathcal{H}_L, \mathcal{K}_L)$ is $axax$-free
since $(C,\mathcal{H},\mathcal{K})$ is $axax$-free.
Further, $N(C_L,\mathcal{K}_L)<N$ since $d_{K_0}$ joins two disjoint runs of $K_0$. Hence, by the inductive hypothesis, $(C_L,\mathcal{H}_L,\mathcal{K}_L)$ admits an outerplanar support $Q_L$ with outer cycle $C(\mathcal{H}_L)$.

Since in $Q_L$, the outer cycle is $C(\mathcal{H}_L)$ and
in $Q_R$, the outer cycle is $C(\mathcal{H}_R)$, it follows
that $H_1$ and $H'_s$ are consecutive in the outer cycles of $Q_L$ and $Q_R$.
To obtain a support $Q$ for $(C,\mathcal{H,K})$, we identify
the copy of $H_1$ in $Q_L$ and $Q_R$, and similarly, we identify
the copy of $H'_s$ in $Q_L$ and $Q_R$. It follows that $Q$ is
outerplanar and $C(\HH)$ is the outer cycle in the resulting embedding of $Q$.

Next, we show that $Q$ is a support. 
Consider an arbitrary subgraph $K\in\mathcal{K}$.
We consider three possible cases for the runs of $K$.

Suppose the runs of $K$ lie entirely in $C_R$. Then,
$\mathcal{H}_K$ induces a connected subgraph in $Q$ since every subgraph in $\mathcal{H}$ intersecting $C_R$ has a run in $\mathcal{H}_R$, and that $Q_R$ is a support for $(C_R,\HH_R,\KK_R)$.

Now, suppose the runs of $K$ lie entirely in $C_L$. 
If $\mathcal{H}_K\cap(\mathcal{H}_{u_0}\cup\mathcal{H}_{v_0})=\emptyset$, i.e., if none of the subgraphs in $\HH_K$ contains $u_0$ or $v_0$, then for each subgraph $H\in\HH_K$, we have $H\subseteq \arc(v_0,u_0)$ and thus $H\in\HH_L$ 
since
each $H\in\mathcal{H}$ induces a single run on $C$. 
Since $Q_L$ is a support for $(C_L,\HH_L,\KK_L)$, it follows that
$\mathcal{H}_K$ induces a connected subgraph of $Q$. 
Now, suppose 
$\mathcal{H}_K\cap\mathcal{H}_{u_0}\neq\emptyset$, or $\mathcal{H}_K\cap\mathcal{H}_{v_0}\neq\emptyset$. Assume the former, without loss
of generality. Since the subgraphs in $\mathcal{H}$ were assumed to be strict-containment free,
$K$ intersects a prefix of the sequence of subgraphs in $\mathcal{H}_{u_0}$ in lex. cyclic order, i.e.,
a prefix of the subgraphs $(H_1, \ldots, H_r)$, since
$(H_i\cap C_L)\supseteq (H_{i+1}\cap C_L)$ for $i=1,\ldots, r-1$. 
In particular $H_1\in\mathcal{H}_K$.
Again, as argued, the subgraphs in $\mathcal{H}_R$ are strict-containment free, and hence, they appear consecutively
in lex. cyclic order on the outer cycle of $Q_R$. 
Since $Q_R$ is a support for $(C_R,\mathcal{H}_R,\mathcal{K}_R)$, and $Q_L$ is a support for $(C_L,\mathcal{H}_L,\mathcal{K}_L)$, with $H_1$
appearing in both $Q_L$ and $Q_R$, it follows that $\mathcal{H}_K$ induces a connected subgraph of $Q$.
A similar argument holds when both $\mathcal{H}_K\cap\mathcal{H}_{u_0}\neq\emptyset$ and $\mathcal{H}_K\cap\mathcal{H}_{v_0}\neq\emptyset$.

Finally, suppose that $K$ intersects both $C_L$ and $C_R$.
Since $(C,\mathcal{K})$ is $axax$-free and $K_0$ contains $u_0$ and $v_0$, it follows by Proposition~\ref{prop:chordpartition}, that
$K$ contains either $u_0$ or $v_0$. Therefore,
either $H_1$ or $H'_s$ is in $\mathcal{H}_K$. Now, the
fact that $\mathcal{H}_K$ induces a connected subgraph in $Q$
is identical to the case when $K$ has runs only in $C_L$ s.t. $\mathcal{H}_K\cap\mathcal{H}_{u_0}\neq\emptyset$.
Since $K$ was chosen arbitrarily, $Q$ is a support.
\end{proof}

Now, we are ready to prove the general case.
For the proof of the general case, we require a few technical tools.
For a cycle system $(C,\mathcal{H},\mathcal{K})$, let
$H_0=\arg\min_{H\in\mathcal{H}}\ell(H)$. Let $d_{H_0}=\{u_0,v_0\}$ denote
the chord of $H_0$ attaining the minimum.
The two \emph{derived cycle systems}
$(C,\mathcal{H}_L,\mathcal{K})$ and $(C,\mathcal{H}_R,\mathcal{K})$
corresponding to $H_0$ are defined as follows:

Let $\mathcal{H}_R=\{H\cap \arc[u_0,v_0]: H\in\mathcal{H}\setminus H_0\}\cup H_0'$, where $H_0'$ corresponds to $H_0$, and spans all the vertices of the complementary $\arc[v_0,u_0]$.
We construct $\mathcal{H}_L$ in two steps.
Set $\mathcal{H}'_L=\{H\cap \arc[v_0,u_0]: H\in\mathcal{H}\setminus H_0\}$.
Let $H\in\mathcal{H}$ be s.t. $H\cap \arc(u_0,v_0)\neq\emptyset$.
Since $H_0\cap \arc(u_0, v_0)=\emptyset$ 
with $u_0,v_0\in H_0$, 
and $(C,\mathcal{H})$ is $axax$-free, 
it follows from Proposition~\ref{prop:chordpartition} that in $\arc[v_0,u_0]$, any run of $H$
must be contained in a run of $H_0$. 
Therefore, we remove all such subgraphs from $\HH'_L$.
We set $\mathcal{H}_L=\mathcal{H}'_L\setminus \{H\in\mathcal{H}'_L: H\cap \arc(u_0, v_0)\neq\emptyset\}\cup H_0''$, where $H_0''=H_0\cup \arc[u_0,v_0]$ corresponds to the subgraph $H_0$. That is, 
$H_0''$ extends from $u_0$ and $v_0$ to contain the complementary arc $\arc[u_0, v_0]$.
This defines the \emph{derived cycle systems} 
$(C,\mathcal{H}_L,\mathcal{K})$ and 
$(C,\mathcal{H}_R,\mathcal{K})$ corresponding to the subgraph $H_0$. 
Note that each subgraph $H\in\HH\setminus\{H_0\}$ has a run in exactly one of $\HH_L$ or $\HH_R$.
Moreover, if $\HH$ is containment-free,
and $(C,\HH)$ is $axax$-free, then it follows that $\HH_R$ is strict-containment free.
Figure~\ref{fig:derived} shows a partition of $(C,\mathcal{H},\mathcal{K})$ into the derived systems.

\begin{figure}[ht!]   
    \centering
    \begin{subfigure}{0.32\textwidth}
    \begin{center}
    \includegraphics[scale=.8]{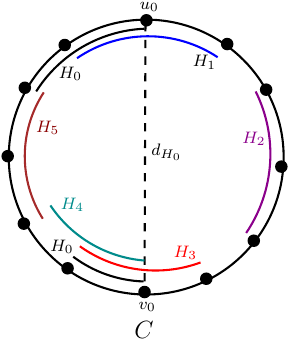}
        \caption{$(C,\HH)$}
            \label{vb1}
    \end{center}
    \end{subfigure}
    \begin{subfigure}{0.32\textwidth}
    \begin{center}
    \includegraphics[scale=.8]{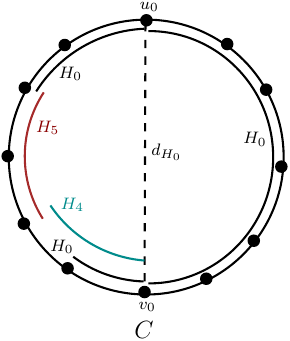}
    \caption{$(C,\HH_L)$}
    \label{fig:vb3}
    \end{center}
     \end{subfigure}
    \begin{subfigure}{0.32\textwidth}
    \begin{center}
    \includegraphics[scale=.8]{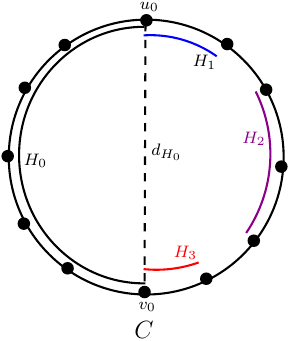}
        \caption{$(C,\HH_R)$}
            \label{vb2}
    \end{center}
     \end{subfigure}
     \caption{Collections $\HH_L$ and $\HH_R$ of derived systems. $\KK$ subgraphs are not shown in the figure.}\label{fig:derived}
     \end{figure}

\begin{proposition}
\label{prop:propp}
Let $(C,\mathcal{H},\mathcal{K})$ be a strong $axax$-free system.
Let $H_0=\arg\min_{H\in\mathcal{H}}\ell(H)$ and that $d_{H_0}=\{u_0,v_0\}$, where $d_{H_0}$ is the chord of $H_0$ attaining 
the minimum. Then, the two derived cycle systems $(C,\mathcal{H}_L,\mathcal{K})$ and
$(C, \mathcal{H}_R, \mathcal{K})$ corresponding
to $H_0$ are also strong $axax$-free.
\end{proposition}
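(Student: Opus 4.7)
The plan is to verify the three components of the strong $axax$-free property separately: $axax$-freeness of $(C,\mathcal{H}_L)$, $axax$-freeness of $(C,\mathcal{H}_R)$, and the intersection property of each derived system with $\mathcal{K}$. Since $\mathcal{K}$ is unchanged in both derived systems, $(C,\mathcal{K})$ inherits $axax$-freeness automatically, so I only need to check the two new $\mathcal{H}$-families and the two intersection properties involving them.

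For $axax$-freeness of a derived family, I would argue by contradiction. Suppose $(H_1,H_2)$ is an $axax$-pair with witnesses $a_1,x_1,a_2,x_2$ in cyclic order. First consider the case where neither $H_i$ is the modified subgraph. Then both are restrictions of originals to a single arc ($\arc[u_0,v_0]$ for $\mathcal{H}_R$, or $\arc[v_0,u_0]$ for $\mathcal{H}_L$), the four witnesses lie in that arc, and the same cyclic sequence lifts to an $axax$-pattern for the preimages in $(C,\mathcal{H})$, contradicting its $axax$-freeness. When one member is $H_0'$ in $\mathcal{H}_R$, $H_0'$ is supported on $\arc[v_0,u_0]$ while the other lives in $\arc[u_0,v_0]$, so the $a$'s and $x$'s sit on opposite arcs; four alternating vertices in this configuration are geometrically impossible on a cycle split at the chord $d_{H_0}$, except for degenerate cases at $u_0$ or $v_0$ which reduce again to the original $axax$-freeness. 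The corresponding case in $\mathcal{H}_L$ is subtler because $H_0''$ contains the filled-in arc $\arc[u_0,v_0]$ beyond $H_0$; if some $a_i$ lies in $\arc(u_0,v_0)\setminus H_0$, I would apply Proposition~\ref{prop:chordpartition} to the original pair $(H_0,\tilde{H}_2)$ — using $u_0,v_0\in H_0$ and $H_0\cap\arc(u_0,v_0)=\emptyset$ — to force any run of $\tilde{H}_2$ in $\arc[v_0,u_0]$ to be contained in a run of $H_0$; but that is exactly the condition under which the construction of $\mathcal{H}_L$ discards $\tilde{H}_2$, a contradiction. Remaining configurations with both $a_i\in H_0\cap\arc(v_0,u_0)$ lift directly to an $axax$-pattern of $(H_0,\tilde{H}_2)$ in the original.

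For the intersection property, ordinary restrictions inherit it immediately from $(C,\mathcal{H},\mathcal{K})$. For the modified subgraphs $H_0'$ and $H_0''$, I would exploit that each contains $H_0$ together with an extra arc and combine the original intersection property applied to $H_0$ with the $axax$-freeness of $(C,\mathcal{K})$ (plus Proposition~\ref{prop:chordpartition} where needed) to pin down a common vertex. The main obstacle I anticipate is the hybrid case for $\mathcal{H}_L$ where $H_0''$'s newly added vertices participate in the alternating pattern; the resolution rests on the fact that the filtering step defining $\mathcal{H}_L$ was set up precisely so that any such hybrid forces the other subgraph to have been removed, making the bad configuration impossible in the first place.
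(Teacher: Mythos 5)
Your plan has the right shape (check $axax$-freeness and the intersection property separately for each derived family), but there are two genuine gaps, one of which affects the core of the argument.

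First, the intersection property for $(C,\mathcal{H}_R,\mathcal{K})$. You assert that ``ordinary restrictions inherit it immediately from $(C,\mathcal{H},\mathcal{K})$.'' This is false for $\mathcal{H}_R$: an element $H = \tilde H \cap \arc[u_0,v_0]$ can be a \emph{proper} restriction of $\tilde H$. Even if $h_1,h_2 \in H$ and $k_1,k_2\in K$ interleave and the original property gives $\tilde H\cap K\ne\emptyset$, that common vertex may lie in $\arc(v_0,u_0)$ and hence be cut off when passing to $H$. The paper avoids this issue entirely: because $H_0$ minimizes $\ell(d_{H})$ over $\mathcal{H}$, every element of $\mathcal{H}_R$ induces a \emph{single run} on $C$, and the intersection property for arcs is immediate (one of $k_1,k_2$ must fall in the arc $H$ between $h_1$ and $h_2$). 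Your proposal never invokes this minimality and therefore misses the reason $\mathcal{H}_R$ is handled so cheaply. (For $\mathcal{H}_L$ the ``ordinary restrictions'' claim is fine, since any kept $\tilde H$ satisfies $\tilde H \subseteq \arc[v_0,u_0]$ and thus equals its restriction.)

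Second, in your $axax$-freeness argument for $\mathcal{H}_L$ in the case involving $H_0''$, you invoke Proposition~\ref{prop:chordpartition} on the pair $(H_0,\tilde H_2)$ to conclude that runs of $\tilde H_2$ in $\arc[v_0,u_0]$ are contained in runs of $H_0$, ``which is exactly the discard condition.'' This misreads both the proposition and the construction. The hypothesis of Proposition~\ref{prop:chordpartition} requires $\tilde H_2$ to meet \emph{both} open arcs $\arc(u_0,v_0)$ and $\arc(v_0,u_0)$, but any $\tilde H_2$ that survives into $\mathcal{H}_L$ satisfies $\tilde H_2 \cap \arc(u_0,v_0) = \emptyset$ by construction, so the proposition's hypothesis is not met and it cannot be applied. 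The paper sidesteps this by simply declaring $axax$-freeness of the derived systems ``clear'' and concentrating the whole proof on the intersection property, where it gives a clean three-case argument for $\mathcal{H}_L$ ($H = H_0''$ with $K$ meeting $\arc[u_0,v_0]$; $H = H_0''$ with $K$ avoiding $\arc[u_0,v_0]$, reduced to the pair $(H_0,K)$ in the original system; and $H \ne H_0''$, which reduces directly to the original). Your outline for this part is too vague to be checked and would need to be replaced by something like the paper's case split.
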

\begin{proof}
Clearly, the cycle systems $(C,\HH_L,\KK)$ and $(C,\HH_R,\KK)$ are $axax$-free.
We show that they satisfy the intersection property.
By the choice of $H_0$, in the derived system $(C,\mathcal{H}_R,\mathcal{K})$, each $H\in\mathcal{H}_R$
induces a single run on $C$. Therefore, it is trivially strong $axax$-free.
Consider the derived system $(C,\HH_L,\KK)$. Let $H\in\mathcal{H}_L$ and $K\in\mathcal{K}$ be such that there are vertices
$h_1, k_1, h_2, k_2$ in cyclic order on $C$ with $h_1,h_2\in H$ and $k_1,k_2\in K$ such that $H\cap K=\emptyset$.
First, suppose that $H=H_0''$. If $K$ has a vertex in $\arc[u_0,v_0]$, then $H\cap K\ne\emptyset$; a contradiction to our assumption.
If $K$ does not have a vertex in $\arc[u_0,v_0]$, then it follows that $H_0$ and $K$ does not satisfy the intersection property in $C$ since $H_0$ and $H_0''$ coincide on $\arc[v_0,u_0]$; again a contradiction to the fact that $(C,\HH,\KK)$ is strong $axax$-free.

So, suppose $H\ne H_0''$.
By construction of $\mathcal{H}_L$, $H\cap \arc(u_0, v_0)=\emptyset$.
This implies $H$ and $K$ do not satisfy the intersection property
in $(C,\mathcal{H},\mathcal{K})$, contradicting the assumption that
it is strong $axax$-free.
\end{proof}

Now, we prove the result for the general setting i.e., when subgraphs in $\HH\cup\KK$ induce more than one run
on $C$.

\outintsupport*
\begin{proof}
By Lemma~\ref{lem:contfree}, we assume wlog that $\HH$ is containment-free.
We prove the result by induction on $N=N(C,\mathcal{H})=\sum_{H\in\mathcal{H}}(n_H-1)\ge 0$.
If $N=0$, then each $H\in\mathcal{H}$ induces a single run on $C$, and by
Lemma~\ref{lem:basecase}, we are done.
So, we can assume that $N\ge 1$ i.e., $n_H\ge 2$ for some $H\in\mathcal{H}$.
Suppose the theorem holds for all strong $axax$-free systems $(C,\mathcal{H}',\mathcal{K})$ with $N(C,\mathcal{H}')<N$.

Let $(C,\mathcal{H,K})$ be a strong $axax$-free system with $N(C,\mathcal{H})=N$.
Let $H_0=\arg\min_{H\in\mathcal{H}}\ell(d_H)$ with $d_{H_0}=\{u_0,v_0\}$, and such that $H_0\cap \arc(u_0,v_0)=\emptyset$. 

Consider the derived cycle systems $(C,\mathcal{H}_L,\mathcal{K})$
and $(C,\mathcal{H}_R,\mathcal{K})$. By Proposition~\ref{prop:propp},
both derived cycle systems are strong $axax$-free.

In the cycle system $(C,\mathcal{H}_R, \mathcal{K})$, each
subgraph in $\mathcal{H}_R$ induces a single run and $\HH_R$ is strict-containment free. By Lemma~\ref{lem:basecase}, there is an outerplanar support $Q_R$
whose outer cycle is $C(\mathcal{H}_R)$.
By the choice of $H_0$, $N(C,\mathcal{H}_L)<N$, and by Proposition~\ref{prop:propp}, $(C,\HH_L,\KK)$ is a strong $axax$-free system. By the inductive hypothesis therefore, $(C,\HH_L,\KK)$
admits an outerplanar support $Q_L$.
By construction of $\mathcal{H}_L$ and $\HH_R$, each $H\in\HH\setminus\{H_0\}$ has its representative vertices in exactly one of $Q_L$ or $Q_R$.
Let $Q$ be the graph obtained by identifying $H''_0$ in $Q_L$ and $H'_0$ in
$Q_R$. The graph $Q$ is clearly outerplanar.
It remains to show that $Q$ is an intersection support for $(C,\HH,\KK)$.

Let $K\in\KK$ be arbitrary.
We show that $\HH_K$ induces a connected subgraph of $Q$.
We consider three cases for the runs of $K$.
First, suppose that all runs of $K$ lie in $\arc[u_0,v_0]$. Then all subgraphs in $\HH$ intersecting $K$ have a representative in $\HH_R$.
The fact that $\HH_K$ is connected in $Q$ follows from the fact that $Q_R$ is an intersection support for 
$(C,\HH_R,\KK)$.

Now, suppose that all runs of $K$ lie entirely in $\arc[v_0,u_0]$.
If each $H\in\HH_K$ has its representative in $\HH_L$, then the fact that $\HH_K$ induces a connected subgraph of $Q$ follows from the fact that $Q_L$ is an intersection support for $(C,\HH_L,\KK)$.
Otherwise, if an $H\in\HH_K$ does not have its representative in $\HH_L$, then by construction of $\HH_L$, $H\cap\arc(u_0,v_0)\ne\emptyset$.
By Proposition \ref{prop:chordpartition} therefore, $H$ contains $u_0$ or $v_0$ and each run of $H$ in $\arc[v_0,u_0]$ is contained in a run of $H_0$.
It follows that $K\cap H_0\ne\emptyset$ and thus $K\cap H''_0\ne\emptyset$.
Recall that the subgraph in $\HH$ containing $u_0$ or those containing $v_0$ appear consecutively in the outer cycle $C(\HH_R)$ of $Q_R$, and $K$ intersects a prefix of these sequences of subgraphs.
Since $K$ intersects $H_0$ which
contains both $u_0$ and $v_0$, $\HH_K$ induces a connected subgraph of $Q$.

Finally, let $K$ intersect both $\arc[u_0,v_0]$ and $\arc[v_0,u_0]$.
Note that in this case $K$ intersects $H_0$ (and hence with $H'_0$ and $H''_0$) since $u_0,v_0\in H_0$ and $(C,\HH,\KK)$ is strong $axax$-free.
Each $H\in\HH\setminus\{H_0\}$ has a representative vertex in one of $Q_L$ or $Q_R$ which are supports for the derived cycle systems with $H_0$ in both $Q_L$ and $Q_R$.
It follows that $\HH_K$ induces a connected subgraph of $Q$.
Since $K$ was chosen arbitrarily, $Q$ is a support.
\end{proof}

\subsection{Implementation}\label{sec:implementation}
In this section, we show a polynomial running time of our algorithm to construct an outerplanar support.
First, we give a short
argument that $|\mathcal{H}|$ and $|\mathcal{K}|$ are polynomially bounded.
To show this, we again use an argument based on VC-dimension, together with results of~\cite{ackerman2020coloring} and~\cite{RR18}.
Possibly the bound here is not sharp, but it is sufficient for our purposes since we only want to establish a polynomial running time.
We start with the definition of an $abab$-free cycle system.

\begin{definition}[$abab$-free]
\label{defn:abab}
Let $(C,\mathcal{H})$ be a cycle system.
$H,H'\in\mathcal{H}$ are an $abab$-pair if there are four distinct vertices 
$a_1,b_1,a_2,b_2$ in cyclic order on $C$ such that $a_1,a_2\in H\setminus H'$ and $b_1,b_2\in H'\setminus H$.
$(C,\mathcal{H})$ is $abab$-free if there are no $abab$-pairs in $\mathcal{H}$.
\end{definition}

Note that, if $(C,\HH)$ is $axax$-free, then it is $abab$-free.
The notion of cycle $abab$-free is equivalent to that of $ABAB$-free hypergraphs introduced by Ackerman et al. \cite{ackerman2020coloring}.
For the ease of exposition, we use $abab$-free instead of $ABAB$-free in the proof of Theorem \ref{thm:vcdim} below.

\begin{theorem}
\label{thm:vcdim}
Let $(G,\mathcal{H},\mathcal{K})$ be an outerplanar non-piercing graph system.
Then, $|\mathcal{H}|,|\mathcal{K}|=O(n^4)$, where $n=|V(G)|$.
\end{theorem}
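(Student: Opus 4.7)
The plan is to combine the VC-dimension machinery already used in Theorem~\ref{thm:vc-dimtw} with the much stronger structural information coming from the outerplanar cycle. Since $G$ is outerplanar, in a fixed outerplanar embedding every vertex of $G$ lies on the outer cycle $C$, so $V(G)=V(C)$ and $|V(C)|=n$. By the paper's standing convention that all subgraphs are induced (the Note after Definition~\ref{def:nonpiercing}), each $H\in\HH$ is uniquely determined by $V(H)\subseteq V(C)$, so it suffices to bound the number of distinct traces $\{H\cap V(C):H\in\HH\}$, and similarly for $\KK$.

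First I would invoke Lemma~\ref{lem:npaxax-free} to pass from the outerplanar non-piercing system $(G,\HH,\KK)$ to the strong $axax$-free cycle system $(C,\HH,\KK)$; in particular both $(C,\HH)$ and $(C,\KK)$ are $axax$-free in the sense of Definition~\ref{defn:axax}. The crux is then to show that an $axax$-free cycle system on $n$ vertices has bounded VC-dimension. Concretely, I claim the VC-dimension of $(V(C),\HH)$ is at most $3$. Suppose for contradiction that some set $S=\{v_1,v_2,v_3,v_4\}$ of four vertices, labelled in cyclic order on $C$, is shattered by $\HH$. Then there exist $H,H'\in\HH$ with $H\cap S=\{v_1,v_3\}$ and $H'\cap S=\{v_2,v_4\}$; but then $v_1,v_3\in H\setminus H'$ while $v_2,v_4\in H'$, so the cyclic sequence $v_1,v_2,v_3,v_4$ realises an $axax$-pattern for the pair $(H,H')$, contradicting the $axax$-free property. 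Exactly the same argument applies to $(V(C),\KK)$.

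Finally, by the Sauer-Shelah lemma (Chap.~10 of~\cite{Mat02LecDiscGeom}) applied to a set system of VC-dimension at most $3$ on $n$ elements, the number of distinct traces is bounded by $\sum_{i=0}^{3}\binom{n}{i}=O(n^3)=O(n^4)$, giving $|\HH|,|\KK|=O(n^4)$ as required. I do not anticipate any serious obstacle: the entire argument collapses onto verifying that two interleaving pairs $\{v_1,v_3\}$ and $\{v_2,v_4\}$ on the cycle are genuinely forbidden, which is immediate from the $axax$-free property, and Sauer-Shelah does the rest. The only mild subtlety is making sure one really counts distinct induced subgraphs rather than distinct ``abstract'' subgraphs, but this is already handled by the paper's convention. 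If one wished to obtain a matching sharper bound or an alternative route that makes the connection to Ackerman et al.~\cite{ackerman2020coloring} explicit, one could instead invoke their counting results for $ABAB$-free hypergraphs, using the observation noted in the paragraph preceding the theorem that $axax$-free implies $abab$-free.
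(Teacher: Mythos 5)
Your proposal is correct, and it takes a genuinely simpler route than the paper's proof. The paper bounds the VC-dimension of $(V(C),\HH)$ by $4$ indirectly: it invokes the characterization of Ackerman et al.~\cite{ackerman2020coloring} that $abab$-free hypergraphs are exactly those realizable by pseudodisks containing a common point, then applies the planar-support theorem of Raman and Ray~\cite{RR18} and argues that a shattered $5$-point set would force a $K_5$ subgraph in a planar support, a contradiction. You instead observe directly that if a $4$-set $\{v_1,v_2,v_3,v_4\}$ in cyclic order is shattered, the two ``interleaving'' traces $\{v_1,v_3\}$ and $\{v_2,v_4\}$ already realize a forbidden $axax$-pattern (indeed, since $v_2,v_4\in H'\setminus H$, it is even an $abab$-pattern), giving VC-dimension at most $3$. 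This is entirely self-contained, avoids the detour through pseudodisk geometry and planar supports, and in fact produces the tighter bound $O(n^3)$, which of course subsumes the stated $O(n^4)$. Both arguments finish with Sauer--Shelah. Your remark at the end that one could alternatively follow the $ABAB$-free counting route is essentially what the paper does; your primary argument is cleaner.
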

\begin{proof}
Let $(C,\HH,\KK)$ be the resulting cycle system.
By Lemma \ref{lem:npaxax-free}, $(C,\mathcal{H},\mathcal{K})$
is strong $axax$-free, and this implies that $(C,\mathcal{H})$
and $(C,\mathcal{K})$ are $abab$-free.

Ackerman et al. \cite{ackerman2020coloring} showed
that $abab$-free hypergraphs are exactly those that can be
represented by a set of \emph{pseudodisks}\footnote{A family $\mathcal{D}$ of compact regions in the plane s.t. the boundary of each region $D\in\mathcal{D}$ is a simple Jordan curve and for any two regions $D, D'$ their boundaries intersect either $0$ or 2 times
is called a set of pseudodisks.} that contain a common point. 
Hence, the hypergraphs defined by $\mathcal{H}$
can be represented s.t. the vertices of $C$ are points in the plane and
the hyperedges in $\mathcal{H}$ are pseudodisks containing the
origin, and each hyperedge is defined by the set of points of $C$
in the pseudodisk.

We show that the VC-dimension of the set system $(C,\HH)$ is at most 4.
The results of Raman and Ray \cite{RR18} imply that for any set
$P$ of points and any set $\mathcal{D}$ of pseudodisks, there
is a planar support $Q$ on $P$, i.e., a planar graph on $P$ s.t.
the points in $D$ induce a connected subgraph of $Q$ for each
$D\in\mathcal{D}$.
If a set of 5 points can be shattered, then for each pair of points,
there is a hyperedge containing that pair. But, this implies that support for these 5 points is $K_5$, contradicting the fact that it is planar.

Hence, the VC-dimension of the set system $(C,\mathcal{H})$ is
at most 4, and this implies, by the Sauer-Shelah lemma (See Chapter 10 in~\cite{Mat02LecDiscGeom})
that $|\mathcal{H}|=O(n^4)$.
Similarly, $|\mathcal{K}|=O(n^4)$.
\end{proof}

\begin{theorem}\label{thm:runningtime-op}
If $(G,\HH,\KK)$ is an outerplanar non-piercing intersection system, then an intersection support can be computed in time $O(n^6)$ where $n=|V(G)|$.
\end{theorem}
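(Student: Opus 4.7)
The plan is to analyze the running time of the recursive algorithm implicit in the proof of Theorem~\ref{thm:outintsupport}, together with the reductions given by Lemma~\ref{lem:npaxax-free}, Proposition~\ref{prop:reducedsystem}, and Lemma~\ref{lem:contfree}. The crucial first ingredient is Theorem~\ref{thm:vcdim}, which gives $|\HH|,|\KK| = O(n^4)$, so the whole cycle intersection system has polynomial description size. Given this, I would first compute an outerplanar embedding of $G$ in linear time, extract the outer cycle $C$, pass to the reduced system via Proposition~\ref{prop:reducedsystem}, and prune $\HH$ to a containment-free subfamily using Lemma~\ref{lem:contfree}; each of these preprocessing steps takes time polynomial in $n$ and $|\HH|+|\KK|$.

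Second, I would track the number of recursive calls made by the algorithm of Theorem~\ref{thm:outintsupport}. Each outer call selects an $H_0\in\HH$ minimizing $\ell(H_0)$, splits along the chord $d_{H_0}$ into derived systems $(C,\HH_L,\KK)$ and $(C,\HH_R,\KK)$, invokes Lemma~\ref{lem:basecase} on the right (where every $H\in\HH_R$ is a single run), and recurses on the left. Each such call strictly decreases $N(C,\HH)=\sum_H(n_H-1)\le n\cdot|\HH|=O(n^5)$ by at least one, so there are $O(n^5)$ outer calls along a single chain. An analogous inner recursion inside Lemma~\ref{lem:basecase} strictly decreases $N(C,\KK)=O(n^5)$ by one per call, and within Lemma~\ref{lem:basecase} the output at each leaf is a cycle in lex.\ cyclic order built in time $O(|\HH_R|)$ by Lemma~\ref{lem:cyclesupport}.

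Third, I would give a per-call work analysis showing that the total cost is $O(n^6)$. Each call needs to (i) locate $H_0$ by scanning chord lengths in $O(|\HH|)$ time, (ii) build the derived systems $\HH_L, \HH_R$ by restricting each subgraph's run sequence to the two sub-arcs, in total work $O(\sum_{H\in\HH} n_H) = O(n\cdot|\HH|) = O(n^5)$ across the entire recursion if we handle each run only when it is first split, and (iii) glue the two recursively computed supports $Q_L,Q_R$ in $O(1)$ time at the identified copies of $H_0'$ and $H_0''$. With the appropriate amortization—charging each run of each subgraph to the splits that eliminate it—the total work is bounded by $O(|\HH|+|\KK|)\cdot O(n^2) = O(n^6)$.

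The main obstacle is the amortized bookkeeping: a naive charge of $O(|\HH|+|\KK|) = O(n^4)$ work per recursive call combined with $O(n^5)$ calls would yield $\Omega(n^9)$. The saving comes from the observation that a single run of a single subgraph is touched by only $O(n)$ recursive calls in total (once per chord that cuts it), and that the total number of runs is $O(n\cdot|\HH|)=O(n^5)$, which together with the cost of locating $H_0$ per call pins the total work down to $O(n^6)$.
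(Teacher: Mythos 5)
Your approach shares the paper's starting point — Theorem~\ref{thm:vcdim} gives $|\HH|,|\KK| = O(n^4)$, and you analyze the recursion of Theorem~\ref{thm:outintsupport} and Lemma~\ref{lem:basecase} directly — but you diverge on the crucial bound for the number of recursive calls, and this is where the argument has a genuine gap. You bound the number of calls by the induction parameter $N(C,\HH) = \sum_H (n_H - 1) = O(n\cdot|\HH|) = O(n^5)$, and then try to rescue $O(n^6)$ by an amortization argument. The amortization as stated does not close the gap: you concede that locating $H_0$ costs $O(|\HH|)=O(n^4)$ per call, and with $O(n^5)$ calls this term alone is $O(n^9)$; the final sentence asserts without justification that this cost ``pins the total work down to $O(n^6)$.'' Your charging scheme (``each run is touched by $O(n)$ calls, once per chord that cuts it'') is also suspect as a general claim — a run of $H$ participates in every call in which $H$ is still present, not only in the calls whose chord lies inside that run — and in any case this scheme only bounds the work of maintaining the run lists, not the work of extracting the minimum-$\ell$ subgraph at each level.

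The paper avoids all of this by a sharper structural observation: the chords $d_{H_0}$ (and $d_{K_0}$) selected across the recursion are pairwise non-crossing chords of the $n$-vertex cycle $C$. In the outer recursion, once $\HH_L$ is formed, every remaining subgraph either is $H_0''$ (which has no gap inside $\arc[u_0,v_0]$) or lies entirely in $\arc[v_0,u_0]$, so the next chord cannot cross $d_{H_0}$; in Lemma~\ref{lem:basecase} the cycle is literally cut into $C_L$ and the chord recurses inside $C_L$. Since a set of pairwise non-crossing chords of an $n$-cycle has size at most $n-3$, the total number of recursive calls is $O(n)$, not $O(n^5)$. With $O(n)$ calls, the naive per-call cost of $O(|C|\cdot\max\{|\HH|,|\KK|\}) = O(n^5)$ (a single linear scan to extract $H_0$ and split all subgraphs) already gives $O(n^6)$, with no amortization needed. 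You should replace the $N$-based bound on the recursion depth with the non-crossing-chord count; your $N$-bound is the right induction measure for \emph{correctness} of Theorem~\ref{thm:outintsupport}, but it is far too weak for the running-time analysis.
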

\begin{proof}
Let $(C,\HH,\KK)$ be the resulting strong $axax$-free cycle system.
Then $|C|=n$, and by Theorem~\ref{thm:vcdim}, $|\HH|,|\KK|=O(n^4)$.
We only need to show that our algorithm in Theorem~\ref{thm:outintsupport} runs in time $O(n^6)$.

If each $X\in\HH\cup\KK$ induces a single run on $C$, then by Lemma~\ref{lem:cyclesupport}, $C(\HH)$ is the desired support.
To construct $C(\HH)$, we walk along $C$ and at each vertex $v$, add the subgraphs $H\in\HH$ to $C(\HH)$
s.t. $s(H)=v$.
For each vertex, we order the subgraphs in increasing order of $t(H)$, which can be computed by
sorting the subgraphs $H$ with $s(H)=v$.
Thus, $C(\HH)$ can be computed in time $O(|C| + |\mathcal{H}|\log|\mathcal{H}|)=O(n^4\log{n})$.

For the case when $\HH$ consists of single runs and a subgraph in $\KK$ can have multiple runs, finding a chord $d$ of smallest length can be done by storing
the subgraphs in $\mathcal{K}$ in a heap ordered by $\ell(K)$.
The time taken to partition the problem into two sub-problems is
$O(|C|\max\{|\mathcal{H}|,|\mathcal{K}|\})$
as we need to go through each subgraph in $\mathcal{H}$ and $\mathcal{K}$, and split the runs into the two sub-problems. 
We add at most $|C|-3$ chords in $C$ since the resulting support is outerplanar.
Hence, the
total running time in this case is $O(|C|^2|\max\{|\mathcal{H}|,|\mathcal{K}|\})
= O(n^6)$.

In the general case, a subgraph in $\HH$ can have multiple runs.
To compute an $H\in\mathcal{H}$
minimizing $\ell(H)$, we can store the subgraphs in $\mathcal{H}$
in a heap ordered by $\ell(H)$. We can split the problem into
two sub-problems in $O(|C|\max\{|\mathcal{H}|,|\mathcal{K}|\})$
time. Since we add at most $|C|-3$ chords, the overall running time
is bounded above by $O(|C|^2\max\{|\mathcal{H}|,|\mathcal{K}|\})$.
Hence, the overall running time of the algorithm is $O(n^6)$.
\end{proof}

\subsection{Primal and Dual Supports: Revisited}
\label{sec:ababfree}

The existence of an intersection support also implies a primal and a dual support.
However, we emphasize here that for a primal and a dual support, we require strictly weaker conditions.
In particular, for a primal support, we require the cycle system $(C,\HH)$ to be $abab$-free and not necessarily $axax$-free, and for a dual support, we require $(C,\HH)$ to be $axax$-free.
The result for a primal support directly follows from the following lemma of Raman and Singh~\cite{raman2025supportsgraphsboundedgenus}.

\begin{lemma}[\cite{raman2025supportsgraphsboundedgenus}]
\label{lem:nblock}
Let $(C,\HH)$ be an $abab$-free cycle system.
Then, we can add a set $D$ of non-intersecting chords in $C$ such that each $H\in\mathcal{H}$ 
induces a connected subgraph of $C\cup D$. Further, the set $D$ of non-intersecting chords to add can be computed in time $O(mn^4)$ where $n=|C|$, and $m=|\HH|$.
\end{lemma}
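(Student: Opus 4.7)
I plan to prove the lemma by induction on $N(C,\HH)=\sum_{H\in\HH}(n_H-1)$, the total number of ``excess'' runs. If $N=0$, every $H\in\HH$ already induces a single run on $C$ and is therefore connected, so $D=\emptyset$ suffices. For the inductive step with $N\ge 1$, among all pairs $(H,d)$ where $H\in\HH$ has $n_H\ge 2$ and $d=\{u,v\}$ is a gap chord of $H$ (the endpoints of two consecutive runs of $H$, with $H\cap\arc(u,v)=\emptyset$), I would pick one minimizing $|\arc[u,v]|$ and denote by $H^*$ the subgraph in the chosen pair. Add $d$ to $D$, use it to split $C$ into two sub-cycles $C_L$ (formed by $\arc[v,u]$ glued to $d$) and $C_R$ (formed by $\arc[u,v]$ glued to $d$), form $\HH_L=\{H\cap V(C_L):H\in\HH\}$ and $\HH_R=\{H\cap V(C_R):H\in\HH\}$, and recurse on the two cycle sub-systems to obtain $D_L$ and $D_R$; return $\{d\}\cup D_L\cup D_R$.

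Three routine facts drive the descent. First, the $abab$-free property passes to the sub-systems because the cyclic orders of $C_L$ and $C_R$ are inherited from that of $C$. Second, the measure $N$ strictly decreases: by the choice of $(H^*,d)$, all of $H^*$ lies in $\arc[v,u]$, so $H^*_L=H^*$ and $H^*_R=\{u,v\}$; the chord $d$ merges the two runs of $H^*$ adjacent to $\arc(u,v)$ in $C_L$, giving $n_{H^*_L}=n_{H^*}-1$ and $n_{H^*_R}=1$, a net reduction of one in $N$. Third, the chords added in the two recursive calls lie entirely in $C_L$ or $C_R$, so they cannot cross $d$ or one another, keeping $D$ non-crossing.

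The main obstacle --- what I expect to be the technical heart of the argument --- is showing that each $H\in\HH$ induces a connected subgraph of $C\cup D$. When $H$ lies entirely on one side of $d$, this follows from induction on that sub-problem. When $u\in H$ or $v\in H$, the restrictions $H\cap V(C_L)$ and $H\cap V(C_R)$ share a vertex, so their inductively guaranteed connectedness in $C_L\cup D_L$ and $C_R\cup D_R$ glues $H$ together. The delicate case is $H$ straddling $d$ with $u,v\notin H$. Here $u,v\in H^*\setminus H$, and because $H^*\cap\arc(u,v)=\emptyset$, any vertex of $H$ in $\arc(u,v)$ lies in $H\setminus H^*$; applying $abab$-freeness to the pair $(H^*,H)$ then forces $H\cap\arc(v,u)\subseteq H^*$, and hence $H$ itself has at least two runs. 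If two or more of them lie inside $\arc(u,v)$, a gap chord of $H$ is strictly contained in $\arc(u,v)$ and has length $<|\arc[u,v]|$, contradicting our minimal choice. The truly delicate sub-case --- exactly one run of $H$ inside $\arc(u,v)$ --- calls for a careful tiebreaking rule among all minimizers of $|\arc[u,v]|$ (for instance, preferring the pair whose endpoints are pushed as deep as possible into $H^*$'s runs), so that one of $H$'s two gap chords meeting $u$ or $v$ becomes strictly shorter than $|\arc[u,v]|$ and again contradicts minimality. Once this is settled, the induction closes, and the claimed $O(mn^4)$ running time follows from the $O(n^4)$ bound on $|\HH|$ given by Theorem~\ref{thm:vcdim}, since each of the at most $|C|-3$ recursive levels selects its minimizing gap chord and restricts all subgraphs in time $O(mn)$.
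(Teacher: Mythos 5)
This lemma is imported from~\cite{raman2025supportsgraphsboundedgenus}; the present paper does not reprove it, so I am comparing your attempt against what is actually true, not against a proof in this document.

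Your skeleton (induct on $N(C,\HH)$, split on a minimum-length gap chord, recurse on $C_L$ and $C_R$) is a plausible route, and most of the bookkeeping is correct: $abab$-freeness passes to $C_L,C_R$, $N$ drops by one, the chords stay non-crossing, and the case where $H$ has two or more runs inside $\arc(u,v)$ does contradict minimality. The gap is exactly where you flag it --- the sub-case where a subgraph $H$ with $u,v\notin H$ has a single run inside $\arc(u,v)$ --- and the fix you propose does not work. Consider $C=\{0,\dots,11\}$, $H^{*}=\{0,4,8\}$, $H=\{2,8\}$. The system is $abab$-free ($|H\setminus H^{*}|=1$). The three gap chords of $H^{*}$ all have $\ell=5$ and are the global minimizers, while $H$'s two gap chords $\{2,8\}$ and $\{8,2\}$ both have $\ell=7$. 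Thus your claim that ``one of $H$'s two gap chords meeting $u$ or $v$ becomes strictly shorter than $|\arc[u,v]|$'' is simply false here, and the suggested tiebreak (push endpoints deeper into $H^{*}$'s runs) is vacuous because every run of $H^{*}$ is a singleton. If the algorithm picks $d=\{0,4\}$, then $H\cap V(C_L)=\{8\}$ and $H\cap V(C_R)=\{2\}$ are disjoint, no chord in $\{d\}\cup D_L\cup D_R$ can have one endpoint in $\arc(0,4)$ and the other in $\arc(4,0)$, so $H$ is disconnected in $C\cup D$. The lemma is of course still true (in this example $\{4,8\}$ or $\{8,0\}$ would have been safe choices), but you have neither identified a tiebreak rule that provably avoids all bad chords nor shown that a safe minimizer always exists, and the argument you give for ruling the sub-case out is refuted by the example. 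Until that is repaired --- e.g.\ by proving that for the minimizing $H^{*}$ at most some of its gap chords can be ``bad'' and that a non-bad one always exists, or by switching to a different selection criterion --- the induction does not close. (Minor point: the running-time discussion conflates the $O(n^{4})$ bound on $|\HH|$ with the stated $O(mn^{4})$; the two are separate, and your own accounting actually yields a better bound, so this part should be restated.)
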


\begin{theorem}\label{thm:primalopsupport}
Let $(C,\HH)$ be an $abab$-free cycle system and $c:V(C)\to\{\R,\B\}$ be a 2-coloring of the vertices of $C$.
Then there is a primal support for $(C,\HH)$ that is an outerplanar graph.
\end{theorem}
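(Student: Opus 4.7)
The plan is to reduce the problem directly to Lemma~\ref{lem:nblock} by passing to the sub-cycle induced by the blue vertices. Let $\B=\B(V(C))$ and let $C_B$ be the cycle on $\B$ obtained by visiting the blue vertices in their cyclic order along $C$. For each $H\in\mathcal{H}$, set $H_B=H\cap\B$, and define the cycle system $(C_B,\mathcal{H}_B)$ where $\mathcal{H}_B=\{H_B:H\in\mathcal{H}\}$. If we can show $(C_B,\mathcal{H}_B)$ is $abab$-free, then Lemma~\ref{lem:nblock} gives a set $D_B$ of pairwise non-crossing chords of $C_B$ so that each $H_B$ induces a connected subgraph of $C_B\cup D_B$. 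The graph $Q:=C_B\cup D_B$ is outerplanar (a cycle equipped with non-crossing chords has a planar drawing keeping every vertex on the outer face), it lives on the blue vertices, and by construction $H\cap\B$ induces a connected subgraph of $Q$ for every $H\in\mathcal{H}$; thus $Q$ is the desired primal support.

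The only real verification is that the $abab$-free property descends from $(C,\mathcal{H})$ to $(C_B,\mathcal{H}_B)$. The argument is a short translation of the definition: suppose $(C_B,\mathcal{H}_B)$ contained an $abab$-pattern witnessed by $H_B,H'_B$ and four vertices $a_1,b_1,a_2,b_2$ that appear in this cyclic order on $C_B$, with $a_1,a_2\in H_B\setminus H'_B$ and $b_1,b_2\in H'_B\setminus H_B$. These four vertices are blue, hence belong to $V(C)$, and since the cyclic order on $C_B$ is inherited from $C$ they occur in the same cyclic order on $C$. Moreover $H_B\setminus H'_B = (H\setminus H')\cap\B$ and similarly for the $b_i$'s, so $a_1,a_2\in H\setminus H'$ and $b_1,b_2\in H'\setminus H$. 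This would witness an $abab$-pair in $(C,\mathcal{H})$, a contradiction.

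Putting the two pieces together yields the theorem: pass to $(C_B,\mathcal{H}_B)$, observe it is $abab$-free by the preceding paragraph, invoke Lemma~\ref{lem:nblock} to produce the chord set $D_B$, and output $Q=C_B\cup D_B$. There is no serious obstacle here; the construction is essentially free once the descent of the $abab$-free property is noted, and the resulting algorithm inherits the polynomial running time of Lemma~\ref{lem:nblock}.
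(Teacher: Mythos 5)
Your proof is correct and follows essentially the same route as the paper: pass to the induced cycle on the blue vertices, observe that the $abab$-free property is inherited (a step the paper asserts without elaboration and you spell out), and then apply Lemma~\ref{lem:nblock} to produce the non-crossing chord set. No discrepancies.
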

\begin{proof}
For each $v\in V(C)$ such that $c(v)=\R$, we delete it and make its neighbours adjacent.
Let $C'$ be the resulting cycle obtained form $C$, and $\HH'=\{H\cap C':H\in\HH\}$.
Then $(C',\HH')$ is an $abab$-free cycle system.
By Lemma \ref{lem:nblock}, there is a set $D$ of non-intersecting chord added to $C'$ such that each subgraph in $\HH'$ induces a connected subgraph of $C'\cup D$. Thus $C'\cup D$ is an outerplanar primal support for $(C',\HH')$.
Note that this is also a primal support for $(C,\HH)$.
\end{proof}

For the dual setting, one may likewise hope that
the $abab$-free condition is sufficient to obtain a
dual support. Unfortunately, that is not the case as the following
example shows:
Let $G$ be the graph on $\{1,2,\ldots 6\}$ as shown in Figure~\ref{fig:asteroidal}.
Let $\mathcal{H}$ be subgraphs 
$H_1=\{1,2,3\}$, $H_2=\{3,4,5\}$, $H_3=\{5,6,1\}$ and $H_4=\{2,4,6\}$.
It can be checked that $\mathcal{H}$ is $abab$-free.
The dual support for $(G,\HH)$ is $K_4$, a complete graph on 4 vertices, which is not outerplanar.

The reason why $(G,\mathcal{H})$ does not admit a dual outerplanar
support is that the induced system $(C,\mathcal{H})$ is not
$axax$-free - $H_4$ forms an $axax$-pair with each of $H_1, H_2$
and $H_3$.
However, $axax$-freeness is a sufficient condition for $(C,\HH)$ to exhibit an outerplanar dual support and it follows from Theorem \ref{thm:outintsupport}.

\begin{theorem}\label{opdualsupport}
If $(C,\HH)$ is $axax$-free, then it admits an outerplanar dual support.
\end{theorem}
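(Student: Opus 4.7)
The plan is to derive this immediately from Theorem \ref{thm:outintsupport} by choosing $\mathcal{K}$ to be the collection of singleton subgraphs of $C$, thereby reducing dual supports to a special case of intersection supports.

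Specifically, I would define $\mathcal{K}=\{\{v\}:v\in V(C)\}$, treating each vertex of $C$ as a trivial (one-vertex) connected subgraph. Under this choice, for every $K=\{v\}\in\mathcal{K}$ the hyperedge $\mathcal{H}_K=\{H\in\mathcal{H}:V(H)\cap V(K)\neq\emptyset\}$ coincides with $\mathcal{H}_v=\{H\in\mathcal{H}:v\in V(H)\}$, which is precisely the hyperedge of the dual hypergraph at $v$. Consequently, any intersection support for $(C,\mathcal{H},\mathcal{K})$ is, by definition, a dual support for $(C,\mathcal{H})$.

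The remaining task is to verify that $(C,\mathcal{H},\mathcal{K})$ is strong $axax$-free, so that Theorem \ref{thm:outintsupport} applies. First, $(C,\mathcal{H})$ is $axax$-free by hypothesis. Second, $(C,\mathcal{K})$ is trivially $axax$-free since a single-vertex subgraph cannot contain two distinct vertices $x_1,x_2$, so no $axax$-pattern can ever be formed among elements of $\mathcal{K}$. Third, the intersection property holds vacuously for the same reason: a configuration requiring $k_1,k_2\in K$ with $k_1\neq k_2$ is impossible when $K$ is a singleton. Hence $(C,\mathcal{H},\mathcal{K})$ is strong $axax$-free.

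Applying Theorem \ref{thm:outintsupport} yields an outerplanar intersection support $\tilde{Q}$ on $\mathcal{H}$, and by the translation above $\tilde{Q}$ is the desired outerplanar dual support. I do not anticipate any significant obstacle: the only thing to double-check is that the reduced-system preprocessing of Proposition \ref{prop:reducedsystem} is compatible with singleton $\mathcal{K}$ — but vertices of $C$ that lie in no $H\in\mathcal{H}$ give an empty $\mathcal{H}_v$ (vacuously connected), so they may simply be discarded from $\mathcal{K}$ before invoking Theorem \ref{thm:outintsupport}, and this does not affect the argument.
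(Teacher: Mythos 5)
Your proposal is correct and matches the paper's own proof: both take $\mathcal{K}=\{\{v\}:v\in V(C)\}$, observe that strong $axax$-freeness holds trivially for singleton $\mathcal{K}$, and invoke Theorem~\ref{thm:outintsupport} to obtain an outerplanar intersection support, which is then a dual support.
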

\begin{proof}
Let $\KK=\{\{v\}:v\in V(C)\}$ be a collection of subgraphs each consisting of a single vertex.
Then $(C,\KK)$ is $axax$-free.
Therefore, the intersection system $(C,\HH,\KK)$ is $axax$-free.
Note that the strong $axax$-free property is trivially satisfied by $(C,\HH,\KK)$.
By Theorem \ref{thm:outintsupport}, it has an intersection support for $Q$, which is also a dual support for $(C,\HH)$ since each $v\in V(C)$ corresponds to a subgraph in $K\in\KK$.
\end{proof}
\begin{figure}[ht!]
    \centering
    \includegraphics[width=0.3\linewidth]{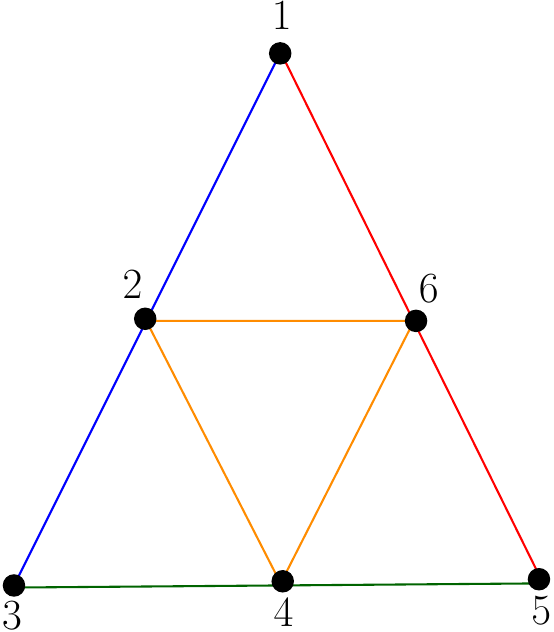}
    \caption{$abab$-free cycle system whose only dual support is $K_4$. The induced subgraphs are shown by four different colors on edges.}
    \label{fig:asteroidal}
\end{figure}

Again, we might hope that for an outerplanar intersection system
$(G,\mathcal{H},\mathcal{K})$, if it satisfies the $axax$-free property,
then like the primal and dual, we can obtain an outerplanar intersection support.
However, the following example shows that in order to construct 
an intersection support, one needs the strong $axax$-free property:
Let $C=(1,2,\ldots 7,1)$ be a cycle as shown in Figure~\ref{fig:alternatesequence}.
Let $\mathcal{H}$ consist of induced subgraphs $H_1=\{1,5\},H_2=\{2\},H_3=\{3,4\}$ and $H_4=\{6,7\}$, and $\mathcal{K}$ consist of induced subgraphs $K_1=\{1,2\},K_2=\{2,3\},K_3=\{4,6\},K_4=\{1,7\},K_5=\{2,7\}$ and $K_6=\{4,5\}$ of $C$.
It is easy to check that both $(C,\mathcal{H})$ and $(C,\mathcal{K})$ are $axax$-free. However, for each $K_i$, $i=1,\ldots, 6$, there is exactly
one pair $H_{k},H_\ell$, $k\neq\ell\in\{1,\ldots, 4\}$ s.t. $H_k$ and $H_\ell$ intersect $K_i$, and hence, the intersection support is a complete graph on $4$ vertices
(see Figure~\ref{fig:k4support}), which is not outerplanar.

Observe that vertices $1,4,5,6$ are in cyclic sequence with $1,5\in H_1$ and $4,6\in K_3$ such that $H_1\cap K_3=\emptyset$. Similarly, 
vertices $1,2,5,7$ appear in cyclic sequence with $1,5\in H_1$ and $2,7\in K_5$ such that $H_1\cap K_5=\emptyset$. Hence, the graph system $(C,\HH,\KK)$ does not satisfy the strong $axax$-free property.

\begin{figure}[ht!]
\centering
\begin{subfigure}{0.45\textwidth}
    \begin{center}
    \includegraphics[scale=.5]{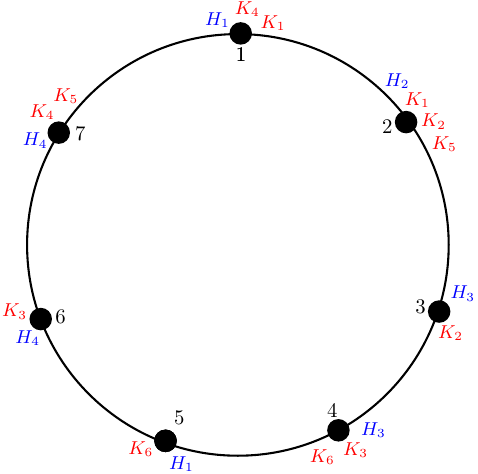}
    \caption{Alternate sequences of $H_1$ with $K_3$ and $K_5$.}
    \label{fig:alternatesequence}
    \end{center}
\end{subfigure}
\hfill
\begin{subfigure}{0.45\textwidth}
\begin{center}
    \includegraphics[scale=.5]{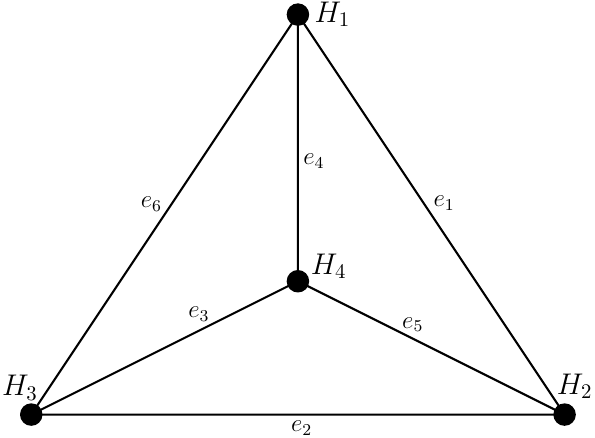}
    \caption{Non-outerplanar support. Edge $e_i$ corresponds to $K_i$ for $i=1\ldots 6$.}
\label{fig:k4support}
\end{center}
 \end{subfigure}
 \vspace{0.5cm}
 \caption{An example of an $axax$-free, but not strong $axax$-free cycle system $(C,\mathcal{H,K})$ (shown in left), that does not admit an outerplanar support (shown in right).}
 \label{fig:nonopsupport}
 \end{figure}

\section{Conclusion}\label{sec:conclusion}
In this paper, we gave algorithms to construct support for hypergraphs defined on a host graph $G$ and collections of non-piercing subgraphs of $G$.
When $G$ is outerplanar, we showed that there are outerplanar primal, dual, and intersection supports.
Deciding if an abstract hypergraph admits a 2-outerplanar support was shown to be NP-hard by Buchin et al. \cite{buchin2011planar}.
The complexity of the decision problem is not known for outerplanar support.
It is plausible that the machinery we used for the construction of outerplanar support may help in resolving this open problem.

For the case when $G$ has treewidth $t$, we constructed primal, dual and intersection supports of treewidth at $O(2^t)$, $O(2^{4t})$ and $2^{O(2^t)}$, respectively.
All our algorithms run in polynomial time in the number of vertices of $G$ if $t$ is bounded above by a constant.
We also construct examples where the exponential blow-up in the treewidth of any primal or dual supports can not be improved.
In the case of dual setting, we constructed a support of treewidth $O(2^{4t})$.
We leave it an open question if the exponent $4t$ can be improved to $t$.
Similarly, in the intersection setting, there is a double exponential in the treewidth of an intersection support.
We believe that it is possible to get an intersection support of treewidth $O(2^{ct})$ for some constant $c$.

From our results, it follows that a non-piercing outerplanar graph system admits an outerplanar support. Since $\tw(G)=2$ for an outerplanar graph $G$, we wonder if for a non-piercing graph system of treewidth 2, there exists a primal/dual or intersection support of treewidth 2.

\bibliography{ref}
\end{document}